\newcommand{\virgolette}[1]{``#1''}
\newtheorem{teorema}{Theorem}[section]
\newtheorem{lemma}[teorema]{Lemma}
\newtheorem{prop}[teorema]{Proposition}
\newtheorem{osss}[teorema]{Remark}
\theoremstyle{definition}
\theoremstyle{remark}
\newtheorem*{boundaries}{\bf Vertex boundary conditions}
\newtheorem*{notations}{\bf Operator boundary conditions}
\newcommand{\iii}{{\, \vert\kern-0.25ex\vert\kern-0.25ex\vert\, }}
\DeclareMathOperator{\spn}{span}
\newcommand{\ffi}{\varphi}
\newcommand{\NN}{\mathcal{N}}
\newcommand{\KK}{\mathcal{K}}
\newcommand{\Di}{\mathcal{D}}
\newcommand{\Z}{\mathbb{Z}}
\newcommand{\N}{\mathbb{N}}
\newcommand{\R}{\mathbb{R}}
\newcommand{\Q}{\mathbb{Q}}
\newcommand{\C}{\mathbb{C}}
\newcommand{\Hi}{\mathscr{H}}
\newcommand{\Gi}{\mathscr{G}}
\newcommand{\dd}{\partial}
\newcommand{\ra}{\rangle}
\newcommand{\la}{\langle}
\newcommand{\norm}[1]{\left\lVert #1 \right\rVert}
\date{}
\title{Exact controllability to eigensolutions of the bilinear heat equation\\ on compact networks}
\author{Piermarco Cannarsa
	\\ 
	{\small  University of Rome Tor Vergata}\\
	{\small  Via della Ricerca Scientifica 1, 00133 Roma} \\
	{\small \texttt{cannarsa@mat.uniroma2.it}}\\
	\\ 
	Alessandro Duca
	\\ 
	{\small  Universit\'{e} Paris-Saclay, UVSQ, Laboratoire de Math\'{e}matiques de Versailles. }\\
	{\small 45 avenue des \'{E}tats-Unis 78035 Versailles cedex, France} \\
	{\small \texttt{alessandro.duca@uvsq.fr}}\\
	\\ 
	Cristina Urbani
	\\ 
	{\small  University of Rome Tor Vergata}\\
	{\small  Via della Ricerca Scientifica 1, 00133 Roma} \\
	{\small \texttt{urbani@mat.uniroma2.it}}}
\begin{document}

\maketitle
\begin{abstract}
Partial differential equation on networks have been widely investigated in the last decades in view of their application to quantum mechanics (Schr\"odinger type equations) or to the analysis of flexible structures (wave type equations). Nevertheless, very few results are available for diffusive models despite an increasing demand arising from life sciences such as neurobiology. This paper analyzes the controllability properties of the heat equation on a compact network under the action of a single input bilinear control.

By adapting a recent method due to [Alabau-Boussouira, Cannarsa, Urbani, Exact controllability to eigensolutions for evolution equations of parabolic type via bilinear control], an exact controllability result to the eigensolutions of the uncontrolled problem is obtained in this work. A crucial step has been the construction of a suitable biorthogonal family under a non-uniform gap condition of the eigenvalues of the Laplacian on a graph.  Application to star graphs and tadpole graphs are included.

\vspace{3mm}

\noindent {\bf Keywords:}~bilinear control, heat equations, compact graph, biorthogonal family

\vspace{2mm}
\noindent {\bf 2010 MSC:}~93B05, 35R02
\end{abstract}

\section{Introduction}\label{intro}
In recent years, networks have attracted interest from a larger and larger part of the scientific community. This is certainly due to an increasing demand coming from numerous applications, but also to the new theoretical issues raised by the interaction between dynamics and network structure.

Applications of network type models, also called graphs, can be found, for instance, in quantum mechanics to study the dynamics of free
electrons in organic molecules (see the seminal work \cite{198} and also \cite{204,209}). In material science, they are often used to analyze the
superconductivity in granular and artificial materials \cite{121212}, or even in physics for acoustic and electromagnetic wave-guides networks (see \cite{112,181}). 

Another relevant application can be found in the study of structures composed by flexible/elastic strings or bars. When the elements of the structure are sufficiently thin, the system can be studied by considering wave type equations on graph domains. Many references on the subject are contained in the book \cite{wave}, where also various spectral and controllabilty results are provided (see also \cite{mugn,castro}).

Controllability of wave type equations on networks has been widely investigated in the last decades mostly for what concerns locally distributed or boundary controls. A non-exhaustive list on the subject is \cite{moran,tucavdo,dager,dagerzua}.  

We recall that a network, or graph, is a structure composed by edges connected in some points which are called vertices. The vertices are classified as external or internal: a vertex is external if it is the endpoint of only one edge, and it is internal otherwise. On such a structure it is possible to define a Hilbert space and a self-adjoint Laplace operator which satisfies the so--called Neumann-Kirchhoff boundary conditions at the internal vertices and Dirichlet or Neumann boundary conditions at the external vertices. For a wide analysis on graphs, we refer to the book \cite{berko}.

In this paper, we are interested in the bilinear control of networks -- or graphs -- of diffusive type. In other words, we consider parabolic type equations on each edge of the graph and we study the controllability of the system via suitable multiplicative controls. It is well known that, even on classical domains, important applications motivate the introduction of such kind of control actions that are more realistic in certain situations. For instance, in \cite{kap} several examples are given where the potential coefficient is used as a control.

A model described by our setting comes, for instance, from a neurobiological problem. From the seminal work by A. L. Hodgkin and A. F. Huxley \cite{hod-hux} it is known that the spread of electric current in neurons can be modeled by diffusive evolution equations. Therefore, Rall in \cite{Rall} proposed a cable model for a dendritic tree which consists of a cylindrical trunk and cylindrical branch components. This formulation leads to consider a cable equation of the form
\begin{equation}\label{cable}
    C_m\frac{\partial}{\partial t}v=\frac{a}{2R_i}\frac{\partial^2}{\partial x^2}v-gv
\end{equation}
on a network structure. Here $R_i,C_m,a$ and $g$ are positive constants representing, respectively, axoplasmic resistivity, membrane capacitance per unit area, fiber radius and membrane conductance. The variable $v$ represents the membrane potential at position $x$ along the cable at time $t$. It is assumed that at the nodes, where branching occurs, the potential is continuous and there is conservation of current flowing through the node  (Neumann-Kirchhoff conditions). At the terminals of the dendritic tree it is possible to impose open-end (Dirichlet) or closed-end (Neumann) conditions (see \cite{Abbott}). Equation \eqref{cable} describes the case of a passive cable with constant membrane conductance. However, to model realistic synaptic inputs we must allow the membrane capacity to depend on both space and time \cite[Paragraph 7]{Abbott}. Thus, the cable equation on the edges of the network becomes
\begin{equation*}
    \frac{\partial}{\partial t}v=\frac{\partial^2}{\partial x^2}v-g(x,t)v,
\end{equation*}
which is exactly of the form of our problem.

Bilinear control problems on networks were already studied for the Schr\"odinger equation in \cite{_ammari,_ammari1,_graphs,_graphs1}. However, fewer controllabilty results are available for diffusive networks. For instance, in \cite{bar-zua} the authors study the null controllability properties of a parabolic equation on networks, by means of additive controls. Nevertheless, there is a lack of controllability results for multiplicative controls and, in particular, for controls that are only a scalar function of time, even in standard domains.
This is certainly due to a structural obstacle, described in \cite{bms}, to control these systems \emph{along} a reference trajectory and also to the impossibility to apply methods based on the inverse mapping theorem developed for bilinear hyperbolic problems (see \cite{b,bl}).

On the other hand, the negative result in \cite{bms} does not prevent us from controlling parabolic systems \emph{to} a reference trajectory. Let us consider the abstract evolution equation 
\begin{equation}\label{control-pb-intro}
    \left\{\begin{array}{ll}
        y'(t)+Ay(t)+u(t)By(t)=0,&t\in(0,T)\\
        y(0)=y_0
    \end{array}\right.
\end{equation}
on a Hilbert space $X$. When $A:D(A)\subseteq X\to X$ is a densely defined, self-adjoint linear operator with compact resolvent, $B$ a bounded linear operator and $u$ is a bilinear control, the controllability of \eqref{control-pb-intro} was studied in \cite{acue,acu,cu}. Denoting by $\{\lambda_k\}_{k\in\N^*}$ the eigenvalues of the operator $A$, and by $\{\phi_k\}_{k\in\N^*}$ the corresponding normalized eigenfunctions, it is easy to see that the functions $\varphi_j(t)=e^{-\lambda_j t}\phi_j$ are solutions of the free dynamics ($u\equiv0$) with initial condition $y_0=\phi_j$. Such trajectories are called eigensolutions of problem \eqref{control-pb-intro}. In \cite{acue} it is proved that under the gap condition 
\begin{equation}\label{gap-intro}
    \sqrt{\lambda_{k+1}-\lambda_1}-\sqrt{\lambda_k-\lambda_1}\geq \gamma>0
\end{equation}
and the spreading assumption
\begin{equation}\label{ipB-intro}
    \langle B\phi_j,\phi_j\rangle\neq0,\quad|\lambda_k-\lambda_j|^q|\langle B\phi_j,\phi_k\rangle|\geq b,\quad\forall\,k\neq j
\end{equation}
with $p,q>0$, system \eqref{control-pb-intro} is locally controllable to the $j$-th eigensolution for any time $T>0$. Observe that, because of the gap condition \eqref{gap-intro}, the result is mostly applicable to low dimensional problems.

Nevertheless, since graphs are, essentially, one-dimensional domains, one can hope for an extension of the previous result to these structures.
Therefore, in this work we will focus our attention on the following control problem 
\begin{equation}\label{main}
    \left\{\begin{array}{ll}
        \partial_t\psi(t)-\Delta \psi(t)+u(t)B\psi(t)=0,&t\in(0,T)\\
        \psi(0)=\psi_0,
    \end{array}\right.
\end{equation}
defined on a graph, where $-\Delta$ is a Dirichlet-Neumann Laplacian equipped with suitable boundary conditions, $B$ is a bounded linear operator and the coefficient $u(\cdot)$ is a bilinear control.

However, when studying similar dynamics on a graph, one soon realizes that condition \eqref{gap-intro} fails. Indeed, what is always possible to prove for a Dirichlet-Neumann Laplacian on a general compact graph (see \cite{wave}) is a uniform gap condition among blocks of eigenvalues, namely,
\begin{equation*}
    \sqrt{\lambda_{k+M}}-\sqrt{\lambda_k}\geq\gamma>0,
\end{equation*}
for some $M\in\N^*$. Furthermore, for graphs with a specific structure (tadpole graphs, star graphs, double-ring graphs etc) it has been proved in \cite{_graphs} that, under suitable hypotheses on the length of the edges, also a weak gap condition between consecutive eigenvalues holds:
\begin{equation*}
    \sqrt{\lambda_{k+1}}-\sqrt{\lambda_k}\geq a_k,
\end{equation*}
with $a_k\asymp k^{-2}$.

Even so, we were able to prove the following controllability result for general compact graphs.
\begin{teorema}\label{thm-ex-control-graph}
Let $T>0$, $\Gi$ be a compact graph and $-\Delta$ be a Dirichlet-Neumann Laplacian on $L^2(\Gi,\R)$. Denoted by $\{\lambda_k\}_{k\in\N}$ the eigenvalues of $-\Delta$, and by $\{\phi_k\}_{k\in\N^*}$ the associated normalized eigenfunctios, assume that the following gap condition is satisfied
\begin{equation}\label{gap-required}
    \sqrt{\lambda_{k+1}}-\sqrt{\lambda_k}\geq a_k
\end{equation}
where $\{a_k\}_{k\in\N^*}$ is a sequence of positive real numbers such that $a_k \asymp k^{-p}$, with $p>0$, for $k\in\N^*$.

Let $B:X\to X$ be a bounded linear operator on $\Gi$ such that, for some $j\in\\N^*$
\begin{equation}\label{ipB}
    \la B\phi_j,\phi_1\ra\neq0, \text{ and }\,\, \exists\,b,q>0\,:\: \lambda_k^q\left|\la B\phi_j,\phi_k\ra\right|\geq b,\quad\forall\,k\neq 1.
\end{equation}
Then, system \eqref{main} is locally controllable to the $j$-th eigensolution $\varphi_j(t)=e^{-\lambda_k t}\phi_j$ in any positive time $T>0$, that is, there exists $u\in L^2(0,T)$ such that $\psi(T)=\varphi_j(T)$. Furthermore,
\begin{equation*}
\norm{u}_{L^2(0,T)}\leq \frac{e^{-\pi^2C/T}}{e^{2\pi^2C/(3T)}-1},
\end{equation*}
for a suitable positive constant $C$.
\end{teorema}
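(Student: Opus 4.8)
The plan is to follow the strategy of \cite{acue}: recast the target condition $\psi(T)=\varphi_j(T)$ as an (infinite) moment problem for $u$, solve its linear principal part by means of a biorthogonal family, and absorb the nonlinear remainder by a fixed-point argument. First I would expand the solution of \eqref{main} in the eigenbasis, setting $\psi_k(t)=\la\psi(t),\phi_k\ra$, and apply Duhamel's formula to get
\[
\psi_k(T)=e^{-\lambda_k T}\psi_k(0)-\int_0^T e^{-\lambda_k(T-s)}\,u(s)\,\la B\psi(s),\phi_k\ra\,ds .
\]
Imposing $\psi_k(T)=e^{-\lambda_j T}\delta_{kj}$ and multiplying by $e^{\lambda_k T}$, the control must satisfy, for every $k\in\N^*$,
\[
\int_0^T e^{\lambda_k s}\,u(s)\,\la B\psi(s),\phi_k\ra\,ds=\psi_k(0)-e^{(\lambda_k-\lambda_j)T}\delta_{kj}.
\]
Writing $\la B\psi(s),\phi_k\ra=e^{-\lambda_j s}\la B\phi_j,\phi_k\ra+\la B(\psi(s)-\varphi_j(s)),\phi_k\ra$ separates a linear principal part, with kernel $e^{(\lambda_k-\lambda_j)s}$ and coefficient $\la B\phi_j,\phi_k\ra$, from a remainder $R_k(u)=\int_0^T e^{\lambda_k s}u(s)\la B(\psi(s)-\varphi_j(s)),\phi_k\ra\,ds$ that carries an extra factor $u$ and is therefore of higher order in the contraction regime.

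The crucial step, and the one I expect to be the main obstacle, is the construction in $L^2(0,T)$ of a family $\{\sigma_m\}_m$ biorthogonal to the exponentials $\{e^{(\lambda_k-\lambda_j)\cdot}\}_k$, namely $\int_0^T e^{(\lambda_k-\lambda_j)s}\sigma_m(s)\,ds=\delta_{km}$, together with explicit bounds on $\norm{\sigma_m}_{L^2(0,T)}$. Since only the non-uniform gap $\sqrt{\lambda_{k+1}}-\sqrt{\lambda_k}\geq a_k$ with $a_k\asymp k^{-p}$ is available, the classical Ingham/Fattorini--Russell results (which require a uniform gap) do not apply directly; I would instead build the $\sigma_m$ from a Weierstrass-type entire function having its zeros at the $\sqrt{\lambda_k}$, and then track carefully how the shrinking gaps enter the resulting estimates. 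Optimizing the free parameters of this construction in terms of $T$ is what produces a bound of the form appearing in the statement.

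Granting the biorthogonal family, the linearized moment problem is solved explicitly by
\[
u=\sum_m\frac{\psi_m(0)-e^{(\lambda_m-\lambda_j)T}\delta_{mj}-R_m(u)}{\la B\phi_j,\phi_m\ra}\,\sigma_m ,
\]
where the spreading condition \eqref{ipB} is exactly what makes the division by $\la B\phi_j,\phi_m\ra$ admissible: the hypothesis $\la B\phi_j,\phi_1\ra\neq0$ handles the exceptional index, while $\lambda_k^q|\la B\phi_j,\phi_k\ra|\geq b$ ensures that each coefficient is inflated by at most the polynomial factor $\lambda_m^q/b$. Convergence of this series in $L^2(0,T)$ then follows by combining the biorthogonal norm estimates with the decay of the data $\psi_m(0)$, which is guaranteed once $\psi_0-\phi_j$ is taken in the domain of a suitable power of $-\Delta$ so as to beat the polynomial loss.

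Finally I would close the argument by a fixed-point scheme: the map that sends $u$ to the control obtained by re-solving the displayed linearized problem with the updated remainder $R(\cdot)$ is shown to be a contraction on a small ball of $L^2(0,T)$, provided $\norm{\psi_0-\phi_j}$ is small enough; its unique fixed point is the desired control. The norm estimates for $\{\sigma_m\}$ then propagate through the series and the fixed point to give the claimed bound on $\norm{u}_{L^2(0,T)}$. The genuine difficulty throughout is the biorthogonal construction: one must bound $\norm{\sigma_m}_{L^2(0,T)}$ sharply enough that the polynomial loss $\lambda_m^q$ coming from \eqref{ipB} is absorbed while keeping the dependence on $T$ explicit, and this is the technical heart of the proof.
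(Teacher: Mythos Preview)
Your overall plan is correct and is essentially the strategy of \cite{acue} that the paper also follows. The paper, however, organizes the argument more modularly: rather than running the fixed-point scheme directly on \eqref{main}, it first proves that the \emph{linearized} problem \eqref{lin-intro} is $j$-null controllable in any time $T>0$ with control cost $K(T)\leq e^{\nu/T}$ for small $T$, and then invokes the black-box Theorem~\ref{teo1} (from \cite{acue}) to pass to the nonlinear problem. Thus the entire ``separate the remainder $R_k(u)$ and contract'' part of your sketch is already packaged in Theorem~\ref{teo1}; what is genuinely new on the graph is the biorthogonal construction under the non-uniform gap and the verification of the cost estimate, i.e.\ Propositions~\ref{prop-bio-fam}--\ref{prop-jnullcontr}.

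There is one point in your sketch that should be corrected, because as written it would lead you to an unnecessarily weak (and in fact different) statement. You say that convergence of the control series requires $\psi_0-\phi_j$ to lie in the domain of a power of $-\Delta$ ``so as to beat the polynomial loss''. In the paper no extra regularity of the initial datum is used: the polynomial losses coming both from \eqref{ipB} and from the non-uniform gap (the factor $\big(1+\gamma^2/(a_k(a_k+2\sqrt{\lambda_1}))\big)^{2M}\asymp \lambda_k^{2pM}$ in \eqref{biort-bound}) are all absorbed by the factor $e^{-2\lambda_k T}$ that appears in the biorthogonal estimate itself. This is precisely the content of Proposition~\ref{prop-G_R}, which shows that $\sum_k \lambda_k^{p'} e^{-2\lambda_k T+R\sqrt{\lambda_k}}/|\la B\phi_j,\phi_k\ra|^2\leq e^{C/T}$ for small $T$, and is what yields the crucial cost bound $K(T)\leq e^{\nu/T}$ needed to feed into Theorem~\ref{teo1}. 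If you instead try to buy convergence by taking smoother data, you lose the sharp $T$-dependence and will not recover the stated estimate on $\norm{u}_{L^2(0,T)}$.
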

In Theorem \ref{thm-ex-control-graph}, the sequence $\{a_k\}_{k\in\N^*}$ is assumed to be polynomially decreasing. However, it is not difficult to prove that the same result is also valid for a wider class of spectral gaps as, for instance, $a_k\asymp e^{-\sqrt\lambda_k}$. Nevertheless, the weak gap condition of the explicit networks analyzed in this work is satisfied with $a_k\asymp k^{-p}$.

Another model giving rise to a fading gap condition is the one studied in \cite{boy-mor}. In this work the authors study the minimal null control time for problem with additive controls. They could overcome the phenomenon of condensation of eigenvalues in the moment problem they solve by a block resolution technique.

Even if the proof of our result is based on a linearization argument and the resolution of a moment problem as well, we are unable to use the approach of \cite{boy-mor} because no estimate on the cost of control with respect to control time $T$ is there provided. 

This paper is organized as follows.
\begin{itemize}
    \item In Section ,2 we introduce graph theory and recall some useful spectral properties of the Laplace operator on such structures,
    \item In Section 3, we introduce the notion of controllability to eigensolutions. Then, we prove Theorem \ref{thm-ex-control-graph} and we state two semi-global controllability results.
    \item Section 4 is devoted to explicit applications to specific graphs. Moreover we give a flavour of how to treat the case of multiple eigenvalues.
\end{itemize}

\section{Preliminaries}\label{Sec2}
\subsection{Introduction to graph theory}

We denote by graph a structure composed by segments or half-lines (named edges) connected each other in some 
points (named vertices). When the graph is composed only by a finite number of edges with finite length, it is
called compact graph. 

Let $\Gi$ be a compact graph composed by $N\in\N^*$ edges $\{e_j\}_{j\leq N}$ of lengths $\{L_j\}_{j\leq N}$ and 
$M\in\N^*$ vertices $\{v_j\}_{j\leq M}$. For every vertex $v$ of the graph $\Gi$, we denote 
\begin{align*}N(v):=\big\{l \in\{1,...,N\}\ |\ v\in 
e_l\big\}.\end{align*}
We call $V_e$ and $V_i$ the external and the internal vertices of 
the graph $\Gi$ (see Figure \ref{vertici}).
\begin{figure}[H]
	\centering
	\includegraphics[width=\textwidth-100pt, height=65pt]{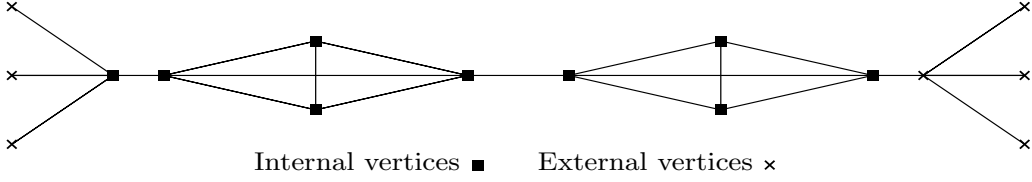}
	\caption{Internal and external vertices in a compact graph.}\label{vertici}
\end{figure}
We equip the graph with a metric parametrizing each edge $e_j$ with a coordinate going from $0$ to its 
length $L_j$. We 
consider functions $f:=(f^1,...,f^N)$ with domain the graph $\Gi$ such that $$f^j:(0,L_j)\rightarrow \R,\ \ \ j\leq N.$$ We introduce now the Hilbert space $L^2(\Gi):=L^2(\Gi,\R)=\prod_{j\leq N}L^2((0,L_j),\R)$ equipped with the norm $\|\cdot\|_{L^2}$ induced by the scalar product
$$\la\psi,\ffi\ra_{L^2}:=\sum_{j\leq N}\la\psi^j,\ffi^j\ra_{L^2(e_j,\R)}=\sum_{j\leq 
N}\int_{e_j}\psi^j(x)\ffi^j(x)dx,\ \ \ \  \ \ \forall\, \psi,\ffi\in L^2(\Gi,\R).$$
%For $s>0$, we define the spaces $$H^s=H^s(\Gi):=\prod_{j=1}^N 
%H^s(e_j,\R).$$
Let $f\in H^1$ and $v$ be a vertex of $\Gi$ connected once to an edge $e_j$ with 
$j\leq N$. When the coordinate parametrizing $e_j$ in the vertex $v$ is equal to $0$ (resp. $L_j$), we denote
\begin{align}\label{NK1}\dd_x f^j(v)=\dd_xf^j(0),\ \ \  \ \ \ \ \ \big(\text{resp.}\ \dd_x 
f^j(v)=-\dd_xf^j(L_j)\big).\end{align}
When $e_j$ is a loop and it is connected to $v$ in both of its extremes, we use the notation 
\begin{align}\label{NK2}\dd_x 
f^j(v)=\dd_xf^j(0)-\dd_xf^j(L_j).\end{align} When $v$ is an external vertex and $e_j$ is the only edge 
connected to $v$, we call $\dd_x f(v)=\dd_x f^j(v).$

Now, we are finally ready to characterize the self-adjoint Laplacian $-\Delta$ appearing in equation \eqref{main}, 
but before, we need to introduce the following boundary conditions on the vertices of the graph.

\needspace{3\baselineskip}
\begin{boundaries}{\ }
\begin{itemize}
\item[($\Di$)] A vertex $v\in V_e$ is equipped with Dirichlet boundary conditions when $f(v)=0$ for every $f\in D(-\Delta)$.

\item[($\NN$)] A vertex $v\in V_e$ is equipped with Neumann boundary conditions when $\dd_xf(v)=0$ for every 
$f\in D(-\Delta)$.
 \item[($\NN\KK$)] A vertex $v\in V_i$ is equipped with Neumann-Kirchhoff boundary conditions when every $f\in 
D(-\Delta)$ is 
continuous in $v$ and $\sum_{j\in N(v)}\dd_x f^j(v)=0$ (according to the notations \eqref{NK1} and \eqref{NK2}).

\end{itemize}
\end{boundaries}

Let us introduce the different definitions of $D(-\Delta)$ considered in this work.

\begin{notations}{\ }
\begin{itemize}

\item The operator $-\Delta$ on $\Gi$ is said to be a Dirichlet Laplacian when
$$D(-\Delta)=\{\psi\in H^2\ :\ \psi\text{ satisfies $(\NN\KK)$ in every $v\in V_i$ and ($\Di$) in 
each $v\in V_e$}\}.$$

\item The operator $-\Delta$ on $\Gi$ is said to be a Neumann Laplacian when
$$D(-\Delta)=\{\psi\in H^2\ :\ \psi\text{ satisfies $(\NN\KK)$ in every $v\in V_i$ and ($\NN$) in 
each $v\in V_e$}\}.$$

\item The operator $-\Delta$ on $\Gi$ is said to be a Dirichlet-Neumann Laplacian when
$$D(-\Delta)=\{\psi\in H^2\ :\ \psi\text{ satisfies $(\NN\KK)$ in every $v\in V_i$ and each $v\in V_e$ verifies ($\Di$) 
or 
($\NN$)}\}.$$
\end{itemize}
\end{notations}

\begin{osss}\label{compact_resolvent}
The boundary conditions described above imply that the Laplacian $-\Delta$ is self-adjoint (see \cite[Theorem\ 
3]{kuk}) and admits compact resolvent (see \cite[Theorem\ 18]{kuk}).\end{osss} 

\subsection{Some spectral properties}

We 
denote by 
$\{\lambda_k\}_{k\in\N^*}$ the ordered sequence of eigenvalues of $-\Delta$ and 
$\{\phi_k\}_{k\in\N^*}$ is a Hilbert basis of $L^2(\Gi,\R)$ composed by the corresponding eigenfunctions.

The following Lemma rephrases \cite[Lemma 6]{_graphs} (arXiv version \cite[Lemma 2.3]{_graphs}) which is a consequence of $\cite[Theorem  \ 
3.1.8]{berko}$ and 
$\cite[Theorem  \ 3.1.10]{berko}$. 

\begin{lemma}{  \cite[Lemma 6]{_graphs} (arXiv version \cite[Lemma 2.3]{_graphs}) }\label{interessante}
Let $\{\lambda_k\}_{k\in\N^*}$ be the eigenvalues of a Dirichlet-Neumann Laplacian $-\Delta$ defined on a compact graph. 
There holds $\lambda_k\asymp k^2$ for $k\geq 2$, {\it 
i.e.} there exist 
$C_1,C_2>0$ such that \begin{align*}C_1 k^2\leq \lambda_k\leq C_2 k^2,\ \ \ \ \ \ \ 
\forall\, k\geq 2.\end{align*}
\end{lemma}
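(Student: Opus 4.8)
The plan is to sandwich $\lambda_k$ between the eigenvalues of two explicitly solvable operators obtained by \emph{decoupling} $\Gi$ at all of its vertices, and then to extract the $k^2$ growth from an elementary Weyl-type count. First I would write down the min-max characterization
\[
\lambda_k=\min_{\substack{W\subseteq Q\\ \dim W=k}}\ \max_{0\neq f\in W}\frac{\int_\Gi |f'|^2\,dx}{\int_\Gi |f|^2\,dx},
\]
where $Q$ is the form domain of $-\Delta$, i.e.\ the space of $H^1$ functions that are continuous at the internal vertices and vanish at the Dirichlet external ones. The quadratic form is insensitive to the Neumann-Kirchhoff and Neumann conditions, which are natural, so only the continuity and Dirichlet constraints enter $Q$.

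Next I would introduce the two comparison operators $-\Delta_D$ and $-\Delta_N$ acting as the second derivative on the disjoint union $\bigsqcup_{j\le N}(0,L_j)$, the first with Dirichlet and the second with Neumann conditions at every edge endpoint. Their form domains are $Q_D=\bigoplus_{j\le N}H^1_0(0,L_j)$ and $Q_N=\bigoplus_{j\le N}H^1(0,L_j)$, and one checks the nesting $Q_D\subseteq Q\subseteq Q_N$ (cutting with Dirichlet shrinks, cutting with Neumann enlarges the form domain). Monotonicity of the min-max then gives $\nu_k^N\le\lambda_k\le\mu_k^D$, where $\{\mu_k^D\}$ and $\{\nu_k^N\}$ are the ordered spectra of $-\Delta_D$ and $-\Delta_N$. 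These are completely explicit: the Dirichlet spectrum on $(0,L_j)$ is $\{(\pi n/L_j)^2:n\ge 1\}$ and the Neumann spectrum is $\{(\pi n/L_j)^2:n\ge 0\}$. Writing $L=\sum_{j\le N}L_j$, the corresponding counting functions $N_D,N_N$ satisfy
\[
\frac{L}{\pi}\sqrt\lambda-N\ \le\ N_D(\lambda),\ N_N(\lambda)\ \le\ \frac{L}{\pi}\sqrt\lambda+N .
\]
Inverting these inequalities yields $\mu_k^D\le\big(\pi(k+N)/L\big)^2$ and $\nu_k^N\ge\big(\pi(k-N)/L\big)^2$ for $k>N$, and combining with the sandwich produces $\big(\pi(k-N)/L\big)^2\le\lambda_k\le\big(\pi(k+N)/L\big)^2$, which is precisely $\lambda_k\asymp k^2$ for all large $k$.

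It then remains to absorb the finitely many indices $2\le k\le N$ into the constants. Here I would use that, since $\Gi$ is connected, the eigenvalue $0$ is simple (in the Neumann case) or absent (whenever at least one external vertex carries a Dirichlet condition), so that $\lambda_k>0$ for every $k\ge2$; taking the extrema of $\lambda_k/k^2$ over this finite set and shrinking $C_1$, enlarging $C_2$ accordingly, preserves both bounds. I expect the main obstacle to lie in the bracketing step, namely in rigorously justifying the form-domain inclusions $Q_D\subseteq Q\subseteq Q_N$ together with the monotonicity of the min-max, and in the careful bookkeeping of the $O(N)$ error in the counting functions; the low-index analysis hinges on the simplicity of the ground state, which is exactly where connectedness of $\Gi$ enters. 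Alternatively, the whole statement follows directly from the Weyl asymptotics and bracketing inequalities of \cite[Theorems 3.1.8 and 3.1.10]{berko}, as indicated in the hypotheses of the lemma.
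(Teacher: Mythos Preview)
Your argument is correct and is precisely the Dirichlet--Neumann bracketing that underlies the results the paper invokes: the paper does not give its own proof of this lemma but simply cites \cite[Lemma~6]{_graphs}, which in turn rests on \cite[Theorems~3.1.8 and~3.1.10]{berko}, the very Weyl asymptotics and bracketing inequalities you outline and explicitly mention at the end. So there is no divergence of approach to compare; your sketch fills in exactly what the cited references contain.

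One small caveat: your treatment of the low indices $2\le k\le N$ relies on $\lambda_2>0$, which you justify via simplicity of the ground state on a \emph{connected} graph. The paper never states connectedness explicitly, and for a graph with $c>1$ components carrying only Neumann/Neumann--Kirchhoff conditions the kernel of $-\Delta$ has dimension $c$, so $\lambda_2=\cdots=\lambda_c=0$ and the inequality $C_1k^2\le\lambda_k$ fails for those $k$. This is a hypothesis-level issue rather than a flaw in your reasoning, and in the applications the paper has in mind the graphs are connected; just be aware that the connectedness assumption is doing real work here.
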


When $-\Delta$ is not a Neumann Laplacian, we have
$0\not\in\sigma(-\Delta)$ (the spectrum of 
$-\Delta$), while when $A$ is a Neumann Laplacian we have $0\in\sigma(-\Delta)$ and $\lambda_1=0$.

From \cite[Proposition\ 6.2]{wave}, we have the following lemma.  
\begin{lemma}{\cite[Proposition\ 6.2]{wave}}\label{g13}
Let $\{\lambda_k\}_{k\in\N^*}$ be the ordered sequence of eigenvalues of a Dirichlet-Neumann Laplacian $-\Delta$ defined on a compact graph. 
There exist $\delta>0$ and
$M\in\N^*$ such that 
\begin{equation*}\begin{split}
\inf_{{k\in\N^*}}\left|\sqrt{\lambda_{k+M}}-\sqrt{\lambda_k}\right|\geq \delta M.\\
\end{split}\end{equation*}
\end{lemma}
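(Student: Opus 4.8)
The plan is to deduce the block gap from a \emph{uniform upper bound on the local density} of the square roots $\sqrt{\lambda_k}$, rather than from the two–sided asymptotics $\lambda_k\asymp k^2$ of Lemma \ref{interessante} alone. It is worth stressing at the outset that the estimate $\lambda_k\asymp k^2$ by itself is \emph{not} sufficient: it is compatible with the formation of arbitrarily large clusters of consecutive eigenvalues (a bounded \emph{average} density does not preclude an unbounded density in individual windows), and any such cluster would destroy the inequality $\sqrt{\lambda_{k+M}}-\sqrt{\lambda_k}\ge\delta M$ for every fixed $M$. The finer input one really needs is the Weyl law on a compact quantum graph with \emph{uniformly bounded} remainder.

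Concretely, I would write $\mu_k:=\sqrt{\lambda_k}$, set $\mathcal{L}:=\sum_{j\le N}L_j$ for the total length of $\Gi$, and introduce the counting function $\mathcal N(\mu):=\#\{k\in\N^*:\mu_k\le\mu\}$ for the square roots (eigenvalues counted with multiplicity). The key spectral fact to invoke, which lies behind \cite[Theorem 3.1.8, Theorem 3.1.10]{berko} already used for Lemma \ref{interessante}, is
\begin{equation*}
\mathcal N(\mu)=\tfrac{\mathcal{L}}{\pi}\,\mu+r(\mu),\qquad |r(\mu)|\le R,
\end{equation*}
with $R$ a constant depending only on the combinatorial structure of the graph. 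This bounded remainder immediately translates into the density estimate I need: for every $a\ge 0$ and every $h>0$,
\begin{equation*}
\#\{k\in\N^*:\mu_k\in[a,a+h]\}\le \mathcal N(a+h)-\mathcal N(a^-)\le \tfrac{\mathcal{L}}{\pi}\,h+2R,
\end{equation*}
so that no interval of the $\sqrt{\lambda}$–axis of fixed length $h$ can contain more than $\tfrac{\mathcal{L}}{\pi}h+2R$ eigenvalues.

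With this density bound at hand, the conclusion follows by a pigeonhole argument. I would first fix $\delta>0$ so small that $\alpha:=\tfrac{\mathcal{L}}{\pi}\delta<1$, and then choose $M\in\N^*$ large enough that $\alpha M+2R<M+1$, which is possible precisely because $1-\alpha>0$. I claim that these choices give $\mu_{k+M}-\mu_k\ge\delta M$ for all $k$. Indeed, were $\mu_{k+M}-\mu_k<\delta M$ for some $k$, the $M+1$ consecutive values $\mu_k,\mu_{k+1},\dots,\mu_{k+M}$ would all lie in the interval $[\mu_k,\mu_k+\delta M)$ of length $\delta M$, whence the density estimate would force $M+1\le \tfrac{\mathcal{L}}{\pi}\,\delta M+2R=\alpha M+2R$, contradicting the choice of $M$. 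Taking the infimum over $k$ then yields $\inf_{k\in\N^*}\big(\sqrt{\lambda_{k+M}}-\sqrt{\lambda_k}\big)\ge\delta M$.

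The main obstacle is entirely concentrated in the spectral input of the second paragraph, namely the boundedness of the remainder $r(\mu)$; everything afterward is elementary counting. This is exactly the step where the one–dimensional nature of the graph and the specific vertex conditions ($\NN\KK$, together with $\Di$ or $\NN$ at the external vertices) enter, since the $\mu_k$ arise as the real solutions of the graph's secular determinant, a finite trigonometric equation whose solution set has uniformly bounded density per unit length. Once this is granted, the statement is essentially a quantitative reformulation of the fact that the numbers $\sqrt{\lambda_k}$ are, on average, equispaced with spacing $\pi/\mathcal{L}$.
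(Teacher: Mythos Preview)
Your argument is correct. The paper does not supply its own proof of this lemma; it simply records the statement and attributes it to \cite[Proposition~6.2]{wave}. Your route---Weyl's law on a compact quantum graph with uniformly bounded remainder, then a counting/pigeonhole argument on the square roots $\mu_k=\sqrt{\lambda_k}$---is precisely the mechanism underlying that cited result, so there is nothing to contrast methodologically.

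A couple of minor remarks. First, your emphasis that $\lambda_k\asymp k^2$ alone is insufficient is well placed and worth keeping: without the bounded remainder one cannot rule out arbitrarily tight clusters. Second, the step $\#\{k:\mu_k\in[a,a+h]\}\le \tfrac{\mathcal L}{\pi}h+2R$ uses both a lower bound on $\mathcal N(a^-)$ and an upper bound on $\mathcal N(a+h)$; you might make explicit that the bound $|r(\mu)|\le R$ is two-sided, since only the upper half is sometimes quoted. With that in hand, the choice $\delta<\pi/\mathcal L$ and $M>(2R-1)/(1-\tfrac{\mathcal L}{\pi}\delta)$ closes the argument exactly as you wrote.
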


Now, we present a spectral result valid for specific structures of graphs. The statement for the tadpole, the 
two-tails tadpole, the double-rings graph and the star graph with at most $4$ edges follows from \cite[Proposition 8]{_graphs} (arXiv version \cite[Proposition 2.5]{_graphs}). The case of the star graph with $N\in\N^*$ edges is proved in Appendix \ref{proof_gap_star} by using the techniques developed in \cite[Section 3]{_graphs1}.

\begin{prop}\label{gap_tadpole} Let $\Gi$ be either a tadpole, a 
two-tails tadpole, a double-rings graph or a star graph. Let $\{\lambda_k\}_{k\in\N^*}$ be the ordered sequence of eigenvalues of a Dirichlet-Neumann Laplacian $-\Delta$ defined on $\Gi$. If the set of the lengths of the edges $\{L_j\}_{j\leq N}\in (\R^+)^N$ is so that
$\big\{1,L_1,...,L_N\big\}$ are linearly independent over $\Q$ and all the ratios $L_k/L_j$ are algebraic irrational numbers, then, for every $\epsilon>0$, there exists $C>0$ such that
\begin{equation*}\sqrt{\lambda_{k+1}}-\sqrt{\lambda_k}\geq C  k^{-1-\epsilon},\ \ \ \ \ \ \ \ \forall\, k\in\N^*\end{equation*}
\end{prop}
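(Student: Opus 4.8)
The plan is to establish the lower bound on the spectral gap $\sqrt{\lambda_{k+1}}-\sqrt{\lambda_k}$ by exploiting the explicit structure of the characteristic equation for each of these special graphs. For tadpole, two-tails tadpole, double-rings, and star graphs the eigenvalues admit a secular (determinantal) equation whose solutions $\sqrt{\lambda_k}$ are the positive roots of a trigonometric quasi-polynomial in the variable $x=\sqrt{\lambda}$, with coefficients built from the edge lengths $L_1,\dots,L_N$. My first step is to write this secular equation explicitly and read off that its roots interlace, up to a bounded shift $M$, with an arithmetic progression governed by the total length; this already yields the uniform block gap of Lemma \ref{g13} but not the consecutive gap. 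The refinement to consecutive indices is where the Diophantine hypothesis enters.

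The key idea is that a small gap $\sqrt{\lambda_{k+1}}-\sqrt{\lambda_k}$ can only occur when the trigonometric terms $\cos(L_j x),\sin(L_j x)$ in the secular equation nearly resonate, i.e. when the point $x\big(L_1,\dots,L_N\big)$ lands close to a lattice point of the torus $\big(\R/2\pi\Z\big)^N$, equivalently when $x$ simultaneously makes each $L_j x$ close to an integer multiple of $\pi$. The rational independence of $\{1,L_1,\dots,L_N\}$ over $\Q$ rules out exact resonance (so all eigenvalues are simple and distinct for large $k$), and the assumption that the ratios $L_k/L_j$ are \emph{algebraic} irrationals upgrades this to a \emph{quantitative} non-resonance estimate. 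Concretely, I would invoke a simultaneous-approximation theorem of Diophantine type (of Roth/Schmidt flavour, or the quantitative statement used in \cite[Section 3]{_graphs1}) to bound from below the distance of the vector $\big(L_1 x,\dots,L_N x\big)$ to $\pi\Z^N$ by a negative power of $x$. Since $x=\sqrt{\lambda_k}\asymp k$ by Lemma \ref{interessante}, such a bound translates directly into a lower bound of the form $C k^{-1-\epsilon}$ on the consecutive gap, with the loss $\epsilon$ coming from the effective constants in the approximation theorem.

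The remaining step is the passage from the Diophantine separation on the torus to a separation of the roots of the secular equation. Here I would use that the secular function $F(x)$ has derivative bounded below away from its degenerate configurations: when two roots $\sqrt{\lambda_k},\sqrt{\lambda_{k+1}}$ are close, $F$ must have a critical point between them, forcing the argument vector near a resonant lattice point, which the Diophantine estimate forbids except at distance $\gtrsim k^{-1-\epsilon}$. Quantifying $|F'|$ and the modulus of continuity of $F$ then converts the torus distance into a genuine gap between consecutive roots. For each specific graph the secular determinant is different, so in practice the cleanest route is to reduce all four families to the same normal form already treated in \cite[Proposition 8]{_graphs} and \cite[Section 3]{_graphs1}, and to verify that the star graph with arbitrary $N$ (the only genuinely new case) still fits that normal form; this verification is deferred to Appendix \ref{proof_gap_star}.

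I expect the main obstacle to be the quantitative Diophantine step: transforming the qualitative algebraic-irrationality hypothesis into an explicit $k^{-1-\epsilon}$ separation requires an effective simultaneous approximation bound, and controlling the dependence on all $N$ edge lengths simultaneously (rather than a single ratio) is the technically delicate part, since one must ensure the non-resonance holds for the full vector $\big(L_1 x,\dots,L_N x\big)$ and not merely pairwise.
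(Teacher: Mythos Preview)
Your concluding paragraph---reduce to the normal forms of \cite[Proposition 8]{_graphs} and \cite[Section 3]{_graphs1} and verify that the $N$-edge star graph fits---is exactly the paper's route in Appendix~\ref{proof_gap_star}: the tadpole, two-tails tadpole and double-rings cases are quoted directly from \cite{_graphs}, and only the star graph is handled anew by writing down the entire secular function $G$ and checking that it satisfies the hypotheses of the abstract gap lemma via \cite[Theorem 5.1]{_graphs1}.

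The intermediate mechanism you sketch, however, differs from the paper's and has a weak step. You argue via Rolle that a small gap forces a critical point $x^*$ with $F'(x^*)=0$ and then claim this forces $(L_1 x^*,\dots,L_N x^*)$ near $\pi\Z^N$. That implication is not justified: a zero of the derivative of the secular function has no obvious Diophantine characterization. The paper runs the logic in the opposite direction. The algebraic-irrationality hypothesis enters, through Roth-type estimates such as \cite[Proposition 27]{_graphs}, to give lower bounds on the individual factors $|\cos(\sqrt{\lambda_k}L_l)|$ and $|\sin(\sqrt{\lambda_k}L_l)|$ \emph{at the roots}, which yields $|G'(\nu_k)|\geq C|k|^{-1-\tilde d}$. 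The conversion of this derivative bound into a consecutive gap is then carried out not by a mean-value argument but by a block-combinatorial device: the partition $\{E_m\}$ of $\Z^*$ induced by the uniform $M$-gap of Lemma~\ref{g13}, the divided-difference matrices $F_m$ of \cite[Lemma 3.16]{_graphs1}, and a trace estimate on $F_m^*F_m$ (Lemma~\ref{entire1}) that is unpacked in Lemma~\ref{entire2} to give $(\nu_{k+1}-\nu_k)^2\gtrsim |k|^{-2(1+\tilde d)}$. In particular, no simultaneous-approximation bound for the full vector $(L_1 x,\dots,L_N x)$ at arbitrary $x$ is required---only individual bounds on the trigonometric factors evaluated at the eigenvalues---so the obstacle you anticipate in your last paragraph does not actually arise in that form.
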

\begin{proof}
See Appendix \ref{proof_gap_star}.
\end{proof}

\section{Abstract controllability results}

\label{Sec3}\subsection{Introduction to controllability to eigensolutions}

System \eqref{main} is a specific case of a more general control problem defined on a separable Hilbert space. Now, we recall the abstract result from \cite{acue} of local exact controllability of a bilinear parabolic problem of the form
\begin{equation}\label{u}
\left\{
\begin{array}{ll}
y'(t)+A y(t)+u(t)By(t)=0,& t>0,\\
y(0)=y_0.
\end{array}\right.
\end{equation}
Let $(X,\langle\cdot,\cdot\rangle,\norm{\cdot})$ be a separable Hilbert space and let $A:D(A)\subseteq X\to X$ be a densely defined linear operator such that
\begin{equation}
\label{ipA-intro}
\begin{array}{ll}
(a) & A \mbox{ is self-adjoint},\\
(b) &\exists\,\sigma\geq0\,:\,\langle Ax,x\rangle \geq -\sigma\norm{x} ^2,\,\, \forall\, x\in D(A),\\
(c) &\exists\,\lambda>-\sigma \mbox{ such that }(\lambda I+A)^{-1}:X\to X \mbox{ is compact}.
\end{array}
\end{equation}
When the assumptions above are verified, the spectrum of $A$ consists of a sequence of real numbers $\{\lambda_k\}_{k\in\N^*}$ which can be ordered, without loss of generality, as $-\sigma\leq\lambda_k\leq\lambda_{k+1}\to\infty$ as $k\to\infty$. We denote by $\{\phi_k\}_{k\in\N^*}$ the associated eigenfunctions, that is, $A\phi_k=\lambda_k\phi_k$ with $\norm{\phi_k}=1$, $\forall\,k\in\N^*$. Furthermore, $-A$ generates a strongly continuous semigroup denoted by $e^{-tA}$. The well-posedness of \eqref{u} is a classical result (see, i.e. \cite{bms}).
\begin{prop}
Let $T>0$. Let $A:D(A)\subseteq X\to X$ satisfy \eqref{ipA-intro}, $B:X\to X$ be a bounded linear operator, $u\in L^2(0,T)$. Then, for any $y_0\in X$ there exists a unique mild solution $y$ of \eqref{u}, that is, a function $y\in C^0([0,T];X)$ which satisfies
\begin{equation*}
    y(t;y_0,u)=e^{-tA}y_0-\int_0^t e^{-(t-s)A}u(s)By(s;y_0,u)ds,\quad\forall\,t\in[0,T].
\end{equation*}
Moreover, there exists a constant $C(T)>0$ such that
\begin{equation*}
    \sup_{t\in[0,T]}\norm{y(t;y_0,u)}\leq C(T)\norm{y_0}.
\end{equation*}
\end{prop}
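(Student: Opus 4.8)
The plan is to realize the mild solution as the unique fixed point of the Duhamel map and then to derive the stated bound via a Gronwall inequality. First I would record the dissipativity estimate furnished by hypothesis \eqref{ipA-intro}(b): since $\langle Ax,x\rangle\geq-\sigma\norm{x}^2$, the generator $-A$ produces a strongly continuous semigroup with $\norm{e^{-tA}}\leq e^{\sigma t}$, so on the compact interval $[0,T]$ one has the uniform bound $\norm{e^{-tA}}\leq M:=e^{\sigma T}$. With this in hand, define on the Banach space $C^0([0,T];X)$ the map
\begin{equation*}
(\Phi y)(t):=e^{-tA}y_0-\int_0^t e^{-(t-s)A}u(s)By(s)\,ds,
\end{equation*}
whose fixed points are exactly the mild solutions of \eqref{u}. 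A preliminary step is to check that $\Phi$ maps $C^0([0,T];X)$ into itself: the first term is continuous by strong continuity of the semigroup, while continuity of the convolution term follows from strong continuity of $e^{-tA}$, boundedness of $B$, and integrability of $u$.

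Next comes the contraction estimate. For $y_1,y_2\in C^0([0,T];X)$ and $t\in[0,\tau]$ with $\tau\leq T$,
\begin{equation*}
\norm{(\Phi y_1)(t)-(\Phi y_2)(t)}\leq M\norm{B}\int_0^t|u(s)|\,\norm{y_1(s)-y_2(s)}\,ds.
\end{equation*}
The only point needing care is that $u$ is merely $L^2$ in time; I would handle the product by Cauchy--Schwarz, obtaining
\begin{equation*}
\norm{(\Phi y_1)(t)-(\Phi y_2)(t)}\leq M\norm{B}\,\sqrt{\tau}\,\norm{u}_{L^2(0,\tau)}\sup_{s\in[0,\tau]}\norm{y_1(s)-y_2(s)}.
\end{equation*}
Since $\sqrt{\tau}\,\norm{u}_{L^2(0,\tau)}\to0$ as $\tau\to0$, one can fix $\tau>0$ small enough that $\Phi$ is a contraction on $C^0([0,\tau];X)$, which yields a unique fixed point on $[0,\tau]$ by the Banach fixed-point theorem.

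Third, I would pass from this local statement to the full interval $[0,T]$ by concatenation. Because the admissible length $\tau$ depends only on the uniform constants $M$, $\norm{B}$ and on $\norm{u}_{L^2}$ over the subinterval, one can split $[0,T]$ into finitely many subintervals $0=t_0<t_1<\dots<t_n=T$ on each of which $\Phi$ contracts; solving successively and using $y(t_i)$ as the initial datum for the next step produces a solution on all of $[0,T]$, and uniqueness propagates from one block to the next. For the a priori bound, take norms in the Duhamel formula to get
\begin{equation*}
\norm{y(t)}\leq M\norm{y_0}+M\norm{B}\int_0^t|u(s)|\,\norm{y(s)}\,ds,
\end{equation*}
and apply Gronwall's inequality with the $L^1$ kernel $s\mapsto M\norm{B}|u(s)|$; bounding $\int_0^T|u(s)|\,ds\leq\sqrt{T}\,\norm{u}_{L^2(0,T)}$ gives $\sup_{t\in[0,T]}\norm{y(t)}\leq M\exp\!\big(M\norm{B}\sqrt{T}\,\norm{u}_{L^2(0,T)}\big)\norm{y_0}$, that is, the claim with $C(T)=e^{\sigma T}\exp\!\big(e^{\sigma T}\norm{B}\sqrt{T}\,\norm{u}_{L^2(0,T)}\big)$.

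I anticipate no serious obstacle, the statement being classical; the only genuinely technical point is that the control enters in $L^2(0,T)$ rather than $L^\infty$. This is precisely what forces the use of Cauchy--Schwarz both in the contraction estimate, to produce the vanishing factor $\sqrt{\tau}\,\norm{u}_{L^2(0,\tau)}$, and in the final Gronwall step, to turn the $L^2$ control into the finite $L^1$ kernel needed for the exponential bound.
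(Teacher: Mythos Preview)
Your argument is correct and complete: the Duhamel fixed-point scheme with the semigroup bound $\norm{e^{-tA}}\leq e^{\sigma t}$, the Cauchy--Schwarz treatment of the $L^2$ control to obtain a local contraction, the concatenation over finitely many subintervals, and the Gronwall estimate all go through as written. The paper itself does not prove this proposition; it simply states it as classical and refers to \cite{bms} (Ball--Marsden--Slemrod). So there is nothing to compare at the level of method---you have supplied a full proof where the paper only gives a citation.
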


Fixed any $j\in\N^*$, we introduce the $j$-th \emph{eigensolution} of problem \eqref{u}, that is the function $\varphi_j(t):=e^{-\lambda_j t}\phi_j$, which solves \eqref{u} for $u\equiv0$ and $y_0=\phi_j$. 

We recall that problem \eqref{u} is said to be locally controllable to the $j$-th eigensolution in time $T>0$ if there exists $R_{T}>0$ such that, for any $y_0\in X$ with $\norm{y_0-\varphi_j}<R_T$, there exists a control $u\in L^2(0,T)$ for which the solution of \eqref{u} verifies $y(T;y_0,u)=\varphi_j(T)$.

Given $T>0$ and $j\in\N^*$, we introduce the linear problem
\begin{equation}\label{lin-intro}
\begin{cases}
z'(t)+Az(t)+u(t)B\phi_j=0,\ \ \ \ \ \ \ \ &t\in(0,T),\\
z(0)=z_0.
\end{cases}
\end{equation}
We denote by $z(\cdot;z_0,u)$ the solution of \eqref{lin-intro} associated to the initial condition $z_0$ and control $u$. We say that the pair $\{A,B\}$ is \emph{$j$-null controllable} if there exists a constant $K(T)>0$ such that for any $z_0\in X$ there exist a control $u\in L^2(0,T)$ for which the solution of \eqref{lin-intro} satisfies $z(T;z_0,u)=0$ and moreover $\norm{u}_{L^2(0,T)}\leq K(T)\norm{z_0}$. In particular,
we call
\begin{equation*}
K(T):=\sup_{\norm{z_0}=1}\inf \left\{\norm{u}_{L^2(0,T)}\,:\,z(T;z_0,u)=0\right\}
\end{equation*}
the \emph{control cost}. When assumption \eqref{ipA-intro}, \eqref{gap-intro} and \eqref{ipB-intro} on the pair $\{A,B\}$ are verified, the authors of \cite{acue} proved the following controllability result.

\begin{teorema}\label{teo1}
Let $A:D(A)\subset X\to X$ be a densely defined linear operator such that \eqref{ipA-intro} holds and let $B:X\to X$ be a bounded linear operator such that \eqref{ipB-intro} holds. Let $\{A,B\}$ be $j$-null controllable in any $T>0$ for some $j\in\N^*$ and such that
\begin{equation}\label{bound-control-cost}
K(\tau)\leq e^{\nu/\tau},\quad\forall\,0<\tau\leq T_0,
\end{equation}
for some constants $\nu,T_0>0$.

Then, the system \eqref{u} is locally controllable to the $j$-th eigensolution in any positive time $T>0$. Moreover, the following estimate holds
\begin{equation}\label{u-bound}
\norm{u}_{L^2(0,T)}\leq \frac{e^{-\pi^2C/T}}{e^{2\pi^2C/(3T)}-1},
\end{equation}
for a suitable positive constant $C$.
\end{teorema}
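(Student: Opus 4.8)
The plan is to linearize around the eigensolution and then absorb the bilinear nonlinearity through a fixed-point scheme fed by the assumed $j$-null controllability. First I would eliminate the time dependence of the target by setting $\tilde y(t)=e^{\lambda_j t}y(t)$: a direct computation turns \eqref{u} into $\tilde y'+(A-\lambda_j I)\tilde y+uB\tilde y=0$, and since $\tilde y(T)=e^{\lambda_j T}\varphi_j(T)=\phi_j$, reaching the eigensolution is equivalent to reaching the \emph{stationary} state $\phi_j$ of the free shifted dynamics (note $(A-\lambda_j I)\phi_j=0$). Writing $w=\tilde y-\phi_j$, the goal becomes the null-control problem
\begin{equation*}
w'(t)+(A-\lambda_j I)w(t)+u(t)B\phi_j=-u(t)Bw(t),\qquad w(0)=y_0-\phi_j,\qquad w(T)=0,
\end{equation*}
whose left-hand side is exactly the linear system \eqref{lin-intro} for the shifted operator $A-\lambda_j I$. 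This operator has the same eigenfunctions as $A$, and since its spectrum is merely translated it is $j$-null controllable with a cost bound of the same exponential type, the translation being undone by the bounded, invertible multiplier $u\mapsto e^{-\lambda_j t}u$. The right-hand side $-uBw$ is a quadratic perturbation.

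Next I would solve, for a prescribed source, the controlled linear problem. Treating $g:=-uBw\in L^2(0,T;X)$ as known, Duhamel's formula shows that the constraint $w(T)=0$ amounts to forcing the control to cancel the single state $\xi=e^{-T(A-\lambda_j I)}(y_0-\phi_j)+\int_0^T e^{-(T-s)(A-\lambda_j I)}g(s)\,ds$ built from the initial displacement and the propagated source. The $j$-null controllability, together with the spreading condition \eqref{ipB-intro} (which forces every moment $\langle B\phi_j,\phi_k\rangle$ to be nonzero, in particular the component $\langle B\phi_j,\phi_j\rangle$ acting on the otherwise uncontrollable direction $\phi_j$), then yields a control with $\norm{u}_{L^2(0,T)}\le K(T)\big(\norm{y_0-\phi_j}+c\,\norm{g}_{L^2(0,T;X)}\big)$, while well-posedness gives $\sup_{[0,T]}\norm{w}\le C(T)\big(\norm{y_0-\phi_j}+\norm{g}_{L^2(0,T;X)}\big)$.

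The nonlinearity is then removed by a fixed point. On a small ball of $C^0([0,T];X)\times L^2(0,T)$ I would define $g\mapsto\widehat g:=-uBw$, where $(w,u)$ is the pair just constructed. Since $B$ is bounded and the two solution maps $g\mapsto u$ and $g\mapsto w$ are affine with Lipschitz constants controlled by $K(T)$ and $C(T)$, the difference $\widehat g_1-\widehat g_2$ is estimated by $\norm{B}$ times the ball radius times $\norm{g_1-g_2}$; hence, provided $\norm{y_0-\phi_j}$ is small enough (this fixes the local radius $R_T$), the map is a contraction. Its fixed point produces an admissible pair with $w(T)=0$, i.e.\ $y(T)=\varphi_j(T)$, and undoing the rescaling returns the control for \eqref{u}.

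Finally, the explicit estimate is obtained by iterating this correction rather than applying it once, and by summing the per-step control costs. Writing the geometric expansion
\[
\tfrac{e^{-\pi^2C/T}}{e^{2\pi^2C/(3T)}-1}=e^{-\pi^2C/T}\sum_{n\ge1}e^{-2n\pi^2C/(3T)},
\]
one recognizes the $n$-th term as the cost of the $n$-th correction: each step is controlled by the linear cost $K(\tau)\le e^{\nu/\tau}$ against a residual that the parabolic flow has already contracted by the geometric ratio $e^{-2\pi^2C/(3T)}$, the overall prefactor $e^{-\pi^2C/T}$ encoding the (exponentially small) admissible radius $R_T$. I expect the crux to be exactly this bookkeeping: the linear cost $K(\tau)=e^{\nu/\tau}$ blows up as $\tau\to0^+$, so closing the contraction and summing the series forces $R_T$ and the contraction ratio to be exponentially small in $1/T$, and the whole argument works only because the parabolic semigroup decays fast enough along the spectral directions to beat this blow-up. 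This competition of exponentials is the structural reason why one controls \emph{to}, but not \emph{along}, the eigensolution, in line with the obstruction of \cite{bms}.
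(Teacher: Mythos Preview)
The paper does not prove Theorem~\ref{teo1}: it is quoted verbatim as a result of \cite{acue} (see the sentence immediately preceding the statement), and the present paper only \emph{uses} it as a black box in the proof of Theorem~\ref{thm-ex-control-graph}. So there is no ``paper's own proof'' to compare your attempt against.

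That said, your sketch is broadly in the spirit of the argument in \cite{acue}. A couple of remarks. First, the reduction $\tilde y=e^{\lambda_j t}y$ and $w=\tilde y-\phi_j$ is correct and is the standard way to turn ``control to the eigensolution'' into a null-control problem for a perturbed linear system. Second, your exposition mixes two mechanisms: a single fixed-point on $[0,T]$ (third paragraph) and an iterative time-splitting scheme (fourth paragraph). The estimate \eqref{u-bound} in \cite{acue} comes from the latter: one partitions $[0,T]$ into countably many subintervals, alternates a free-evolution phase (which contracts the error by a fixed geometric ratio thanks to parabolic smoothing) with a short control phase (whose cost is governed by \eqref{bound-control-cost}), and sums the resulting geometric series. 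Your interpretation of the series expansion of $e^{-\pi^2C/T}/(e^{2\pi^2C/(3T)}-1)$ is exactly this. The single-interval fixed point you describe first would give local controllability but not that specific bound; you should present the iterative scheme as the actual proof, not as an afterthought. Finally, the passage ``the translation being undone by the bounded, invertible multiplier $u\mapsto e^{-\lambda_j t}u$'' is not quite right: shifting $A$ by $\lambda_j I$ changes the eigenvalues to $\lambda_k-\lambda_j$, and one must check that the moment problem and the cost estimate survive this shift (they do, essentially because only differences $\lambda_k-\lambda_j$ enter), but it is not literally a change of variable on $u$.
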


\subsection{Proof of Theorem \ref{thm-ex-control-graph}}

This section is devoted to the proof of Theorem \ref{thm-ex-control-graph}.

Let $T>0$, $\Gi$ be a compact graph and let $X=L^2(\Gi,\R)$. We shall prove the controllability of the following problem
\begin{equation}\label{bil-par-eq}
    \begin{cases}
    \dd_t\psi(t)-\Delta\psi(t)+u(t)B\psi(t)=0,\ \ \ \ \ \ \ \ &t\in(0,T),\\
    \psi(0)=\psi_0.
    \end{cases}
\end{equation}
to the $j$-th eigensolution $\varphi_j(t)$ in time $T$.

The proof of Theorem \ref{thm-ex-control-graph} is based on the following propositions.

\begin{prop}\label{prop-bio-fam}
Let $\{\lambda_k\}_{k\in\N^*}$ be a sequence of ordered non-negative real numbers such that $\sum_{k=2}^\infty \lambda_k^{-1}<+\infty $.
Assume that there exist $M\in\N^*$, $\gamma>0$ and an ordered sequence of decreasing positive real numbers $\{a_k\}_{k\in\N^*}$ verifying, for every $k\in\N^*$,
\begin{equation}\label{gap-prop32}
\begin{split}
\begin{cases}
\sqrt{\lambda_{k+M}}-\sqrt\lambda_{k }\geq \gamma,\\\\
\sqrt{\lambda_{k+1}}-\sqrt\lambda_{k }\geq a_k.
\end{cases}
\end{split}
\end{equation}
Then, there exists a sequence of functions $\{\sigma_k\}_{k\in\N^*}$ which is biorthogonal to the family of exponentials $\{e^{\lambda_k t}\}_{k\in\N^*}$ in $L^2(0,T)$:
\begin{equation*}
\int_0^T \sigma_k(t)e^{\lambda_j t}dt=\delta_{k,j},\quad\forall\,k,j\in\N^*.
\end{equation*}
Moreover, the biorthogonal family $\{\sigma_k\}_{k\in\N^*}$ satisfies
\begin{equation}\label{biort-bound}
\norm{\sigma_k}^2_{L^2(0,T)}\leq C\Big (1+ \frac{\gamma^2}{a_{k}(a_{k} + 2   \sqrt\lambda_1)}\Big)^{2M}
e^{-2\lambda_k T}e^{C/(T\gamma^2)}e^{C\sqrt{\lambda_k}/\gamma}B(T,\gamma),
\end{equation}
where \begin{equation*}\begin{split}B(T,\gamma):=\begin{cases}\frac{1}{T}+\frac{1}{T^2\gamma^2},\ \ \ \ \ \ &T\leq\frac{1}{\gamma^2},\\\\
{C}\gamma^2,\ \ \ \ \ \ &T>\frac{1}{\gamma^2},\\
\end{cases}\end{split}\end{equation*}
\end{prop}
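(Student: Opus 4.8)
The plan is to realize each $\sigma_k$ as the inverse Fourier--Laplace transform of a carefully constructed entire function, thereby turning the biorthogonality conditions into an interpolation problem. Extending $\sigma_k$ by zero outside $(0,T)$ and setting $F_k(z)=\int_0^T\sigma_k(t)e^{zt}\,dt$, the relations $\int_0^T\sigma_k(t)e^{\lambda_j t}\,dt=\delta_{kj}$ become $F_k(\lambda_j)=\delta_{kj}$, while $F_k$ is entire, of exponential type at most $T$, and belongs to the Paley--Wiener class of transforms of $L^2(0,T)$ functions; by Plancherel, $\norm{\sigma_k}^2_{L^2(0,T)}=\frac{1}{2\pi}\int_{\R}|F_k(i\xi)|^2\,d\xi$. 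Since the nodes $\lambda_j$ lie on the real axis, where $e^{zt}$ is large, while the $L^2$ control is measured on the imaginary axis, the normalization $F_k(\lambda_k)=1$ forces $F_k$ to be exponentially small in the relevant $L^2$ sense; this is precisely the origin of the factor $e^{-2\lambda_k T}$ in \eqref{biort-bound}. It then remains to produce one admissible $F_k$ and to estimate its norm.

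First I would encode the spectrum through a Weierstrass canonical product. Because $\sum_{k\ge2}\lambda_k^{-1}<\infty$, the even entire function $G(z)=\prod_{k\in\N^*}\bigl(1-z^2/\lambda_k\bigr)$ converges and has simple zeros exactly at $\pm\sqrt{\lambda_k}$. The block gap condition $\sqrt{\lambda_{k+M}}-\sqrt{\lambda_k}\ge\gamma$ bounds the counting function of the zeros $\sqrt{\lambda_k}$ by an affine function of slope $\sim M/\gamma$, so $G$ has exponential type $O(M/\gamma)$, and evaluating the product-based estimates at height $\sqrt{\lambda_k}$ is what produces the factor $e^{C\sqrt{\lambda_k}/\gamma}$. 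The natural interpolant is the quotient $G(z)/\bigl(G'(\sqrt{\lambda_k})(z-\sqrt{\lambda_k})\bigr)$, which equals $1$ at $\sqrt{\lambda_k}$ and vanishes at every other $\sqrt{\lambda_j}$. To pass from this function of the variable $z\sim\sqrt\lambda$ to an admissible $F_k$ in the variable $\lambda$, I would multiply by an even multiplier of Ingham / Beurling--Malliavin type (for instance a power of $\sin(\delta z)/(\delta z)$) tuned so that, after the change of variables $\lambda=z^2$ and a contour shift, the resulting function is the transform of an $L^2(0,T)$ function, decays on the real axis well enough for Plancherel, and carries the Gaussian weight responsible for $e^{-2\lambda_k T}$. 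Optimizing the multiplier's parameter against $T$ yields the remaining $T$-dependent factors $e^{C/(T\gamma^2)}$ and $B(T,\gamma)$, the dichotomy $T\lessgtr\gamma^{-2}$ reflecting the two regimes of this optimization.

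The heart of the argument is a lower bound for the derivative $|G'(\sqrt{\lambda_k})|$ that degrades in a controlled way when eigenvalues condense. From \[ G'(\sqrt{\lambda_k})=-\frac{2}{\sqrt{\lambda_k}}\prod_{j\neq k}\Bigl(1-\frac{\lambda_k}{\lambda_j}\Bigr), \] I would separate the \emph{near} indices $j$ (those with $\sqrt{\lambda_j}$ within $\gamma$ of $\sqrt{\lambda_k}$) from the \emph{far} ones. The block gap guarantees that an interval of length $\gamma$ contains at most $M$ of the $\sqrt{\lambda_j}$, hence there are at most $2M$ near factors. Each near factor would be compared to the value it would take at the generic spacing $\gamma$: using $1-\lambda_k/\lambda_j=(\lambda_j-\lambda_k)/\lambda_j$ and the elementary bound $\lambda_{k+1}-\lambda_k=(\sqrt{\lambda_{k+1}}-\sqrt{\lambda_k})(\sqrt{\lambda_{k+1}}+\sqrt{\lambda_k})\ge a_k(a_k+2\sqrt{\lambda_1})$ (together with its analogues for the neighbouring gaps, which are $\ge a_k$ since $\{a_k\}$ is decreasing), this comparison shows that the reciprocal of the near product is bounded, up to constants, by $\bigl(1+\gamma^2/(a_k(a_k+2\sqrt{\lambda_1}))\bigr)^{2M}$. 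The far factors are governed by the type of $G$ and are absorbed into the factor $e^{C\sqrt{\lambda_k}/\gamma}$ already identified, so that combining near and far estimates reproduces the prefactor in \eqref{biort-bound}.

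Finally I would assemble $\sigma_k$ as the inverse transform of the admissible $F_k$ built above and estimate $\norm{\sigma_k}^2_{L^2(0,T)}=\frac{1}{2\pi}\int_{\R}|F_k(i\xi)|^2\,d\xi$ by inserting the bounds of the previous steps: the type estimate controls the integrand, the multiplier guarantees integrability and the $T$-dependence, the derivative lower bound contributes the $(1+\gamma^2/(a_k(a_k+2\sqrt{\lambda_1})))^{2M}$ factor, and the location of the node gives $e^{-2\lambda_k T}$. The main obstacle is exactly the non-uniform (condensing) gap: a naive product estimate would produce a prefactor growing like an uncontrolled power of $1/a_k$. The role of the block gap $\gamma$ is to ensure that only the $\le 2M$ nearest zeros are genuinely close, so that the loss is confined to the explicit power $2M$; maintaining, at the same time, the parabolic cost rate $e^{C/(T\gamma^2)}$ as the local gaps $a_k$ tend to zero is the delicate bookkeeping that the proof must carry out.
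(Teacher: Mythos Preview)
Your overall strategy---build an interpolating entire function from a canonical product, tame its growth with a multiplier, and recover $\sigma_k$ via Paley--Wiener---is the same family of arguments the paper uses, but the implementation differs in three respects and one of them exposes a genuine gap in your sketch.

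First, the paper works directly in the $\lambda$-variable: its interpolant is
\[
F_k(z)=\prod_{m\neq k}\Bigl(1-\frac{iz-\lambda_k}{\lambda_m-\lambda_k}\Bigr),
\]
so no passage from $\sqrt\lambda$ to $\lambda$ is required. Your quotient $G(z)/\bigl(G'(\sqrt{\lambda_k})(z-\sqrt{\lambda_k})\bigr)$ is not even in $z$, hence not an entire function of $\lambda=z^2$; turning it into one is a step you have left unaddressed. Second, rather than splitting the product into near and far factors, the paper bounds $\ln|F_k|$ through the counting function $N_k(\rho)=\#\{m:0<|\lambda_m-\lambda_k|\le\rho\}$: the weak gap gives $N_k(\rho)=0$ for $\rho<a_k(a_k+2\sqrt{\lambda_1})$, the block gap gives $N_k(\rho)\le 2(\sqrt\rho/\gamma+1)M$, and integrating $N_k(\rho)\,|z|/(\rho^2+\rho|z|)$ over $\rho$ yields in one stroke both the polynomial factor $\bigl(1+\gamma^2/(a_k(a_k+2\sqrt{\lambda_1}))\bigr)^M$ and the type bound $e^{C\sqrt{|z|+\lambda_k}/\gamma}$. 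This is cleaner than the near/far decomposition and gives the sharper exponent $M$ (so $2M$ after squaring), whereas your count of ``at most $2M$ near factors'' would lead to $4M$ in the squared norm.

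Third, and this is the real gap: a power of $\sin(\delta z)/(\delta z)$ decays only polynomially on the real axis, while $|F_k(x)|$ grows like $e^{C\sqrt{|x|}/\gamma}$, so the Plancherel integral $\int_\R|F_k(i\xi)|^2|P(i\xi)|^2\,d\xi$ would diverge. The paper (following Cannarsa--Martinez--Vancostenoble) uses instead the multiplier $P_{N',T'}(z)=e^{izT'/2}\prod_{k\ge N'}\cos(a_kz)$ with $a_k\asymp T'/k^2$, which has exponential type $T'/2$ and satisfies $\ln|P_{N',T'}(x)|\le -c\sqrt{T'|x|}$ for large $|x|$; this super-polynomial decay is exactly what is needed to kill the $e^{C\sqrt{|x|}/\gamma}$ growth, and the choice of $T'=\min\{T,M^2/\gamma\}$ and $N'\sim M^2/(\gamma^2T')$ is what produces the factors $e^{C/(T\gamma^2)}$ and $B(T,\gamma)$ with the dichotomy at $T=1/\gamma^2$.
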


\begin{proof}
See Appendix \ref{bioApp}.
\end{proof}
Notice that under the hypotheses of Theorem \ref{thm-ex-control-graph}, the conditions in \eqref{gap-prop32} are satisfied. Indeed, the first inequality in \eqref{gap-prop32} is always valid for a Dirichlet-Neumann Laplacian $-\Delta$ on a graph thanks to Lemma \ref{g13}.

\begin{prop}\label{prop-G_R}
Let $-\Delta$ be a Dirichlet-Neumann Laplacian on $X$. Let $B:X\to X$ be a linear bounded operator on $\Gi$ such that \eqref{ipB} holds.

Let $p>0$ and $j\in\N^*$, we define the following function
\begin{equation}\label{G_R}
    G_{R,j}(T):=\frac{R}{T^2}e^{R/T}\sum_{k=1}^\infty\lambda_k^p\frac{e^{-2\lambda_kT+R\sqrt{\lambda_k}}}{|\langle B\phi_j,\phi_k\rangle|^2}.
\end{equation}
Then, for any $R,T>0$, the series in \eqref{G_R} is convergent and there exists a positive constant $C_j$ such that
\begin{equation}\label{estim-G_R}
    G_{R,j}(T)\leq e^{2C_j/T},\quad\forall\,0<T\leq 1.
\end{equation}
\end{prop}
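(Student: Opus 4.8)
The plan is to bound the general term of the series in \eqref{G_R} by combining the spreading hypothesis \eqref{ipB} with the spectral asymptotics of Lemma \ref{interessante}, and then to extract the factor $e^{C/T}$ by completing the square in the exponent $-2\lambda_k T+R\sqrt{\lambda_k}$. First I would use the second part of \eqref{ipB} to estimate $|\langle B\phi_j,\phi_k\rangle|^{-2}\le \lambda_k^{2q}/b^2$ for every $k\ne 1$, handling the single index $k=1$ separately: there $\langle B\phi_j,\phi_1\rangle\ne 0$ guarantees a finite, $T$-independent contribution, which once multiplied by the harmless prefactor $\frac{R}{T^2}e^{R/T}$ stays below $e^{2C_j/T}$. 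This reduces the tail to $b^{-2}\sum_{k\ge 2}\lambda_k^{p+2q}\,e^{-2\lambda_k T+R\sqrt{\lambda_k}}$.

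For convergence for arbitrary $R,T>0$, I would invoke Lemma \ref{interessante} to write $\lambda_k\asymp k^2$, so that the general term behaves like $k^{2(p+2q)}\,e^{-2C_1 k^2 T+R\sqrt{C_2}\,k}$; for each fixed $T>0$ the Gaussian-in-$k$ factor $e^{-2C_1 k^2 T}$ dominates every polynomial and exponential-in-$k$ growth, giving absolute convergence.

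The heart of the estimate \eqref{estim-G_R} is the completion of the square. Setting $\mu_k:=\sqrt{\lambda_k}$ and $c:=R/(4T)$, I would rewrite $-2\lambda_k T+R\sqrt{\lambda_k}=-2T(\mu_k-c)^2+R^2/(8T)$, which factors out exactly the desired exponential $e^{R^2/(8T)}$ and leaves the Gaussian sum $\sum_k \mu_k^{2(p+2q)}\,e^{-2T(\mu_k-c)^2}$. The remaining task is to show this sum grows at most polynomially in $1/T$. Here the decisive tool is the uniform block gap of Lemma \ref{g13}: since $\mu_{k+M}-\mu_k\ge \delta M$, each residue class modulo $M$ places its members at least $\delta M$ apart, so every interval of unit length contains at most $\kappa:=M+1/\delta$ of the $\mu_k$, a bound \emph{independent} of the location of the interval. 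Grouping the indices into unit shells $|\mu_k-c|\in[n,n+1)$ then gives at most $2\kappa$ terms per shell, each bounded by $(c+n+1)^{2(p+2q)}e^{-2Tn^2}$; comparing the resulting series with Gaussian integrals yields a bound of order $\mathrm{poly}(1/T)$, the dominant part coming from $c^{2(p+2q)}\asymp T^{-2(p+2q)}$ near the peak $n=0$.

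Collecting the prefactor $\frac{R}{T^2}e^{R/T}$, the extracted factor $e^{R^2/(8T)}$, and the polynomial bound on the Gaussian sum, I obtain $G_{R,j}(T)\le \mathrm{poly}(1/T)\,e^{(R+R^2/8)/T}$; since any power of $1/T$ is dominated by $e^{\varepsilon/T}$ on $0<T\le 1$, this gives $G_{R,j}(T)\le e^{2C_j/T}$ for a suitable $C_j$. I expect the Gaussian sum to be the main obstacle: the fading consecutive gap $a_k\asymp k^{-p}$ permits up to $\sim T^{-p}$ eigenvalues in a unit window near the peak $c\sim 1/T$, so the weak gap \eqref{gap-required} is useless for counting. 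The crucial point is that the uniform block gap of Lemma \ref{g13} caps the local density of $\{\mu_k\}$ \emph{uniformly in the location}, which is precisely what keeps the sum polynomial in $1/T$ rather than exponential.
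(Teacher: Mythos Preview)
Your argument is correct and follows the same broad strategy as the paper---use \eqref{ipB} to replace $|\langle B\phi_j,\phi_k\rangle|^{-2}$ by $\lambda_k^{2q}/b^2$, extract a factor $e^{C/T}$ from the exponent, and control the remaining sum by a polynomial in $1/T$ using the block gap of Lemma~\ref{g13}---but the execution differs in two places. First, the paper does not complete the square: it writes $e^{-2\lambda_kT+R\sqrt{\lambda_k}}=e^{-\lambda_kT}\cdot e^{-\lambda_kT+R\sqrt{\lambda_k}}$ and bounds the second factor by its global maximum $e^{R^2/(4T)}$, so the leftover sum is the simpler $\sum_k\lambda_k^{p+2q}e^{-\lambda_kT}$ with no moving center. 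Second, to estimate that sum the paper locates the peak of $g(\lambda)=\lambda^{p+2q}e^{-\lambda T}$ at $\lambda=(p+2q)/T$, splits the indices into three ranges $k\le k_1-M$, $k_1-M<k\le k_1+M$, $k>k_1+M$, and compares the two tails with $\int_0^\infty g(\lambda)\,d\lambda=\Gamma(1+p+2q)/T^{1+p+2q}$ via the block gap in the $\lambda$-variable, while you instead count $\mu_k=\sqrt{\lambda_k}$ in unit shells around $c=R/(4T)$ and compare with Gaussian integrals. Your completion of the square gives the sharper constant $R^2/8$ in the exponent, but the paper's splitting avoids having to track a $T$-dependent center and leads to a slightly shorter computation; both approaches rely in exactly the same way on the uniform block gap (your remark that the weak gap \eqref{gap-required} is useless here is spot on).
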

\begin{proof}
For any $\lambda\geq0$, we set $f_1(\lambda)=e^{-2\lambda T+R\sqrt{\lambda}}$ and $f_2(\lambda)=e^{-\lambda T+R\sqrt{\lambda}}$. Then, the maximums of $f_1$ and $f_2$ is respectively attained for $\lambda=\left(\frac{R}{4T}\right)^2$ and $\lambda=\left(\frac{R}{2T}\right)^2$. From this remark and from assumption \eqref{ipB}, we deduce that
\begin{equation}\label{G_R-first-estim}
    \begin{split}
        G_{R,j}(T)&\leq \frac{R}{T^2}e^{R/T}\left(\frac{\lambda_1^pe^{R^2/(8T)}}{|\langle B\phi_j,\phi_1\rangle|^2}+\frac{e^{R^2/4T}}{b^2}\sum_{k=2}^\infty\lambda_k^{p+2q}e^{-\lambda_kT}\right)\\
        &\leq \frac{R}{T^2}e^{R/T}\left(\frac{\lambda_1^pe^{R^2/(8T)}}{|\langle B\phi_j,\phi_1\rangle|^2}+\frac{e^{R^2/4T}}{b^2}\sum_{k=1}^\infty\lambda_k^{p+2q}e^{-\lambda_kT}\right).
    \end{split}
\end{equation}
Now, for any $\lambda\geq0$, we set $g(\lambda)=\lambda^{p+2q}e^{-\lambda T}$. Then, we have
\begin{equation*}
    g(\lambda)\text{ is }\begin{cases}
    \text{increasing}&\text{if }0\leq \lambda< (p+2q)/T,\\
    \text{decreasing}&\text{if }\lambda\geq (p+2q)/T,
    \end{cases}
\end{equation*}
and attains its maximum for $\lambda=(p+2q)/T$. We define the following index:
\begin{equation*}
    k_1:=\sup\left\{k\in\N^*\,:\,\lambda_k\leq (p+2q)/T\right\}.
\end{equation*}
and we divide the sum in \eqref{G_R-first-estim} as follows
\begin{equation}\label{sum-first-estim}
    \sum_{k=1}^\infty \lambda_k^{p+2q}e^{-\lambda_kT}=\sum_{k\leq k_1-M}\lambda_k^{p+2q}e^{-\lambda_kT}+\sum_{k_1-M+1\leq k\leq k_1+M}\lambda_k^{p+2q}e^{-\lambda_kT}+\sum_{k\geq k_1+M+1}\lambda_k^{p+2q}e^{-\lambda_kT},
\end{equation}
where $M$ has been introduced in Lemma \ref{g13}. Of course, if $k_1\leq M$, we consider the first sum in \eqref{sum-first-estim} equal to $0$, and if $k_1\leq M-1$ the second sum starts from $k=1$.

For any $k\leq k_1-M$, from the properties of $g$, we have
\begin{equation*}
    \int_{\lambda_k}^{\lambda_{k+M}} \lambda^{p+2q}e^{-\lambda T}d\lambda\geq (\lambda_{k+M}-\lambda_k)\lambda_k^{p+2q}e^{-\lambda_k T}\geq \gamma \lambda_k^{p+2q}e^{-\lambda_k T},
\end{equation*}
and for any $k\geq k_1+M+1$, 
\begin{equation*}
    \int_{\lambda_{k-M}}^{\lambda_k} \lambda^{p+2q}e^{-\lambda T}d\lambda\geq (\lambda_{k}-\lambda_{k-M})\lambda_k^{p+2q}e^{-\lambda_k T}\geq \gamma \lambda_k^{p+2q}e^{-\lambda_k T}.
\end{equation*}
Therefore, using the estimates above in \eqref{sum-first-estim}, we deduce that
\begin{equation}\label{sum-second-estim}
    \sum_{k=1}^\infty \lambda_k^{p+2q}e^{-\lambda_kT}\leq \frac{2M}{\gamma}\int_0^\infty \lambda^{p+2q}e^{-\lambda T}d\lambda + \sum_{k_1-M+1\leq k\leq k_1+M}\lambda_k^{p+2q}e^{-\lambda_kT}.
\end{equation}
Moreover, since $g$ has a maximum at $\lambda=(p+2q)/T$, we have
\begin{equation}\label{sum-third-estim}
    k=k_1-M-1,k_1-M+2,\cdots,k_1+M\quad\Longrightarrow\quad \lambda_k^{p+2q}e^{-\lambda_kT} \leq ((p+2q)/T)^{p+2q}e^{-(p+2q)}.
\end{equation}
The integral term of \eqref{sum-second-estim} can be rewritten as
\begin{equation}\label{sum-fourth-estim}
    \int_0^\infty \lambda^{p+2q}e^{-\lambda T}d\lambda=\frac{1}{T}\int_0^\infty \left(\frac{s}{T}\right)^{p+2q}e^{-s}ds=\frac{1}{T^{1+p+2q}}\int_0^\infty s^{p+2q}e^{-s}ds=\frac{\Gamma(1+p+2q)}{T^{1+p+2q}},
\end{equation}
where $\Gamma(\cdot)$ is the Euler integral of second kind. Hence, from \eqref{sum-third-estim} and \eqref{sum-fourth-estim} we conclude that there exist two positive constants $C_{q,M},C_{q,M,\gamma}$ such that
\begin{equation*}
    \sum_{k=1}^\infty \lambda_k^{p+2q}e^{-\lambda_kT}\leq \frac{C_{q,M}}{T^{p+2q}}+\frac{C_{q,M,\gamma}}{T^{p+2q+1}}.
\end{equation*}
This last estimate implies the existence of a constant $C_j>0$ such that
\begin{equation*}
    G_{R,j}(T)\leq \frac{R}{T^2}e^{R/T}\left[\frac{\lambda_1^pe^{R^2/(8T)}}{|\langle B\phi_j,\phi_1\rangle|^2}+\frac{e^{R^2/4T}}{b^2}\left(\frac{C_{q,M}}{T^{p+2q}}+\frac{C_{q,M,\gamma}}{T^{p+2q+1}}\right)\right]\leq e^{2C_j/T},\quad\forall\, 0<T\leq 1.
\end{equation*}
\end{proof}

\begin{prop}\label{prop-jnullcontr}
Let $-\Delta$ be a Dirichlet-Neumann Laplacian on $X$ such that its eigenvalues satisfy \eqref{gap-required}. Let $B:X\to X$ be a linear bounded operator on $\Gi$ such that \eqref{ipB} holds.

Then, the pair $\{-\Delta,B\}$ is $j$-null controllable in any positive time $T>0$.
\end{prop}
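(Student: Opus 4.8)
The plan is to establish $j$-null controllability by reformulating the reachability of the target $z(T)=0$ for the linear problem \eqref{lin-intro} as a moment problem, and then solving it through the biorthogonal family produced by Proposition \ref{prop-bio-fam}. First I would write the mild solution of \eqref{lin-intro} by Duhamel's formula and project it onto the eigenbasis $\{\phi_k\}_{k\in\N^*}$. Writing $z_0^k=\la z_0,\phi_k\ra$ and $b_k=\la B\phi_j,\phi_k\ra$, the $k$-th component at time $T$ reads
\begin{equation*}
\la z(T),\phi_k\ra=e^{-\lambda_k T}z_0^k-b_k\,e^{-\lambda_k T}\int_0^T e^{\lambda_k s}u(s)\,ds,
\end{equation*}
so that $z(T)=0$ is equivalent to the sequence of moment equations $\int_0^T e^{\lambda_k s}u(s)\,ds=z_0^k/b_k$ for every $k\in\N^*$. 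Hypothesis \eqref{ipB} guarantees $b_k\neq0$ for all $k$ (for $k=1$ directly, and for $k\neq1$ via $|b_k|\geq b\,\lambda_k^{-q}>0$), so the right-hand sides are well defined.

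Next I would solve this moment problem explicitly. The gap hypotheses of Proposition \ref{prop-bio-fam} hold here: the first inequality in \eqref{gap-prop32} follows from Lemma \ref{g13}, the second from \eqref{gap-required} after replacing $a_k$ by the genuinely decreasing minorant $c\,k^{-p}$, and $\sum_{k\geq2}\lambda_k^{-1}<\infty$ follows from $\lambda_k\asymp k^2$ (Lemma \ref{interessante}). Let $\{\sigma_k\}_{k\in\N^*}$ be the resulting family biorthogonal to $\{e^{\lambda_k t}\}$ and set $u(t):=\sum_{k=1}^\infty (z_0^k/b_k)\,\sigma_k(t)$. By biorthogonality and $L^2$-convergence (to be justified by the bound below, which also legitimizes interchanging sum and integral against $e^{\lambda_k\cdot}\in L^2(0,T)$), this $u$ satisfies every moment equation and hence steers $z_0$ to $0$ at time $T$.

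The heart of the matter is the cost estimate. By the triangle inequality and Cauchy--Schwarz in $k$,
\begin{equation*}
\norm{u}_{L^2(0,T)}\leq\Big(\sum_{k=1}^\infty|z_0^k|^2\Big)^{1/2}\Big(\sum_{k=1}^\infty\frac{\norm{\sigma_k}_{L^2(0,T)}^2}{|b_k|^2}\Big)^{1/2}=\norm{z_0}\,K(T),
\end{equation*}
so it suffices to bound $K(T)^2=\sum_k\norm{\sigma_k}^2/|b_k|^2$. Inserting the explicit estimate \eqref{biort-bound}, the key remaining task is to absorb the prefactor $\big(1+\gamma^2/(a_k(a_k+2\sqrt{\lambda_1}))\big)^{2M}$: since $a_k\asymp k^{-p}$ and $\lambda_k\asymp k^2$, this factor is bounded by $C\lambda_k^{s}$ for a suitable exponent $s$ (of order $Mp$, or $2Mp$ in the Neumann case where $\lambda_1=0$). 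Choosing $R:=C/\gamma$ so that $e^{C\sqrt{\lambda_k}/\gamma}=e^{R\sqrt{\lambda_k}}$, the sum then matches the series defining $G_{R,j}(T)$ in \eqref{G_R} with $p=s$, up to the scalar factors $e^{C/(T\gamma^2)}$, $B(T,\gamma)$ and $T^2e^{-R/T}/R$.

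Applying Proposition \ref{prop-G_R} closes the argument: the series converges for every $R,T>0$, which already yields $K(T)<\infty$ and hence $j$-null controllability for all $T>0$. For $0<T\leq T_0$ the quantitative bound $G_{R,j}(T)\leq e^{2C_j/T}$ together with $B(T,\gamma)\leq T^{-1}+T^{-2}\gamma^{-2}$ gives, after absorbing all polynomial-in-$1/T$ prefactors into the exponential, an estimate of the form $K(T)\leq e^{\nu/T}$. I expect the main obstacle to be precisely this bookkeeping step: verifying that the growth of the biorthogonal prefactor is genuinely polynomial in $\lambda_k$ (this is where the polynomial gap $a_k\asymp k^{-p}$ is essential, and a faster decay would break the matching with $G_{R,j}$), and then tracking the competing exponentials $e^{C/(T\gamma^2)}$, $e^{-R/T}$ and $e^{2C_j/T}$ carefully enough to land on the cost form \eqref{bound-control-cost} needed to invoke Theorem \ref{teo1}.
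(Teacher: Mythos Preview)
Your proposal is correct and follows essentially the same route as the paper: reduce $z(T)=0$ to the moment problem $\int_0^T e^{\lambda_k s}u(s)\,ds=\la z_0,\phi_k\ra/\la B\phi_j,\phi_k\ra$, solve it via the biorthogonal family of Proposition~\ref{prop-bio-fam}, and bound $\norm{u}_{L^2}$ by Cauchy--Schwarz together with \eqref{biort-bound}, absorbing the prefactor $\big(1+\gamma^2/(a_k(a_k+2\sqrt{\lambda_1}))\big)^{2M}$ as a power of $\lambda_k$ so that Proposition~\ref{prop-G_R} applies. Your write-up is in fact slightly more scrupulous than the paper's in verifying the hypotheses of Proposition~\ref{prop-bio-fam}, and the final paragraph on the quantitative cost bound already anticipates the proof of Theorem~\ref{thm-ex-control-graph} rather than being needed for Proposition~\ref{prop-jnullcontr} itself.
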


\begin{proof}
Our aim is to show that, for all $T>0$, there exists a constant $K(T)>0$ such that for any initial condition $z_0\in X$ there exists a control $u\in L^2(0,T)$ which steers the solution $z$ of the linear system \eqref{lin-intro} (with $A=-\Delta$) to $0$ in a time $T$, that is, \eqref{lin-intro} is null controllable. To this purpose, we rewrite the null controllability condition by using the expansion in Fourier series of of the solution $z$ of \eqref{lin-intro}:
\begin{equation*}
    0=z(T;z_0,u)=\sum_{k\in\N^*}e^{-\lambda_k T}\la z_0,\phi_k\ra\phi_k -\int_0^T u(s)\sum_{k\in\N^*}e^{-\lambda_k(T-s)}\la B\phi_j,\phi_k\ra \phi_k\,ds.
\end{equation*}
Thanks to the othornormality of the family of eigenfunctions $\{\phi_k\}_{k\in\N^*}$, we obtain that the following relations have to be satisfied by $u$
\begin{equation*}
    \la z_0,\phi_k\ra=\int_0^T e^{\lambda_k s}u(s)\la B\phi_j,\phi_k\ra\,ds,\quad\forall\, k\in\N^*,
\end{equation*}
which can be rewritten in compact form as
\begin{equation}\label{mom-prob}
    \int_0^T e^{\lambda_k s}u(s)\,ds=d_k,\quad\forall\,k\in\N^*,
\end{equation}
where the coefficients $d_k:=\frac{\la z_0,\phi_k\ra}{\la B\phi_j,\phi_k\ra}$, $\forall\,k\in\N^*$, are well defined thanks to hypothesis \eqref{ipB}.

Since the eigenvalues of the Laplacian on $\Gi$ with $(\Di)$, $(\NN)$ or $(\NN\KK)$ satisfy the hypotheses of Proposition \ref{prop-bio-fam}, we use the biorthogonal family $\{\sigma_k\}_{k\in\N}$ to build our control, so that $u$ satisfies \eqref{mom-prob}:
\begin{equation*}
    u(t):=\sum_{k\in\N} d_k\sigma_k(t).
\end{equation*}
To conclude the proof, it remains to show that $u\in L^2(0,T)$. To this purpose, we use the bound \eqref{biort-bound} for the biorthogonal family $\{\sigma_k\}_{k\in\N}$:
\begin{equation*}
\begin{split}
    \norm{u}_{L^2(0,T)}&=\sum_{k\in\N^*}\left|\frac{\la z_0,\phi_k\ra}{\la B\phi_j,\phi_k\ra}\right|\norm{\sigma_k}_{L^2(0,T)}\leq \norm{z_0}\left(\sum_{k\in\N^*}\frac{\norm{\sigma_k}^2_{L^2(0,T)}}{|\la B\phi_j,\phi_k\ra|^2}\right)^{1/2}\\
    &\leq\norm{z_0}\left( Ce^{C/(T\gamma^2)}B(T,\gamma)\sum_{k\in\N^*}\left (1+ \frac{\gamma^2}{a_{k}(a_{k} + 2   \sqrt\lambda_1)}\right)^{2M}\frac{e^{-2\lambda_kT}e^{C\sqrt{\lambda_k}/\gamma}}{|\la B\phi_j,\phi_k\ra|^2}\right)^{1/2}.
\end{split}
\end{equation*}
We recall that $\lambda_k\asymp k^2$ and $a_k\asymp k^{-p}$ for $k\geq 2$. Thus, the right-hand side of the above inequality can be bounded by
\begin{equation*}
    \norm{z_0}\left(Ce^{C/(T\gamma^2)}B(T,\gamma)\sum_{k\in\N^*}\lambda_k^{2pM}\frac{e^{-2\lambda_kT+C\sqrt{\lambda_k}/\gamma}}{|\langle B\phi_j,\phi_k\rangle|^2}\right)^{1/2}.
\end{equation*}
From Proposition \ref{prop-G_R}, we know that the series in the formula above is convergent for any $T>0$. Hence $u\in L^2(0,T)$ and we have shown that there exists a constant $K(T)>0$ such that
\begin{equation*}
    \norm{u}_{L^2(0,T)}\leq K(T)\norm{z_0}.
\end{equation*}
\end{proof}
Now we can proceed with the proof of Theorem \ref{thm-ex-control-graph}.
\begin{proof}[Proof of Theorem \ref{thm-ex-control-graph}.]
From Proposition \ref{prop-jnullcontr} we deduce that the pair $\{-\Delta,B\}$ is $j$-null controllable in any time $T>0$. What remains to prove in order to use Theorem \ref{teo1} is to show that there exist $T_0,\nu>0$ such that
\begin{equation*}
    K(T)=\left(Ce^{C/(T\gamma^2)}B(T,\gamma)\sum_{k\in\N^*} \lambda_k^{2pM}\frac{e^{-2\lambda_kT}e^{C\sqrt{\lambda_k}/\gamma}}{|\la B\phi_j,\phi_k\ra|^2})\right)^{1/2}
\end{equation*}
satisfies \eqref{bound-control-cost}.

One can check that there exists a constant $R>0$ such that
\begin{equation*}
    K(T)\leq\left(Ce^{C/(T\gamma^2)}\frac{1}{T\gamma^2}\sum_{k\in\N^*}\lambda_k^{2pM}\frac{e^{-2\lambda_kT+C\sqrt{\lambda_k}/\gamma}}{|\langle B\phi_j,\phi_k\rangle|^2}\right)^{1/2}\leq \norm{z_0}G_{R,j}(T)^{1/2},\qquad\forall\,T\leq\min\left\{1,\frac{1}{\gamma^2}\right\},
\end{equation*}
where the function $G_{R,j}(\cdot)$ is defined in \eqref{G_R}. Thus, from estimate \eqref{estim-G_R}, we infer that
\begin{equation*}
    K(T)\leq e^{C_j/T},\qquad\forall\,0<T\leq T_0,
\end{equation*}
with $T_0:=\min\left\{1,\frac{1}{\gamma^2}\right\}$ and $C_j$ a suitable positive constant independent of $T$.

Therefore, we can apply Theorem \ref{teo1} and conclude that \eqref{bil-par-eq} is locally controllable to the $j$-th eigensolutions in any time $T>0$ by means of a control $u\in L^2(0,T)$ which satisfies \eqref{u-bound}.
\end{proof}

\subsection{Semi-global controllability results}
We have proved in Proposition \ref{prop-jnullcontr} that if the eigenvalues of the Dirichlet-Neumann Laplacian satisfy \eqref{gap-required} and the bounded linear operator $B$ fulfills the spreading property \eqref{ipB}, then the pair $\{-\Delta, B\}$ is $j$-null controllable. Furthermore, we have shown in the proof of Theorem \ref{thm-ex-control-graph} that, under the same assumptions on the operators $-\Delta$ and $B$, the control cost behaves as \eqref{bound-control-cost}.

Thus, it is possible to apply \cite[Theorem 1.4]{acue} and \cite[Thereom 1.5]{acue} and deduce the following semi-global controllability results.
\begin{teorema}\label{teo-glob1}
Let $-\Delta$ be a Dirichlet-Neumann Laplacian on $X=L^2(\Gi,\R)$ such that the eigenvalues $\{\lambda_k\}_{k\in\N^*}$ satisfy \eqref{gap-required}. Let $B:X\to X$ be a bounded linear operator on $\Gi$ such that \eqref{ipB} holds. 

Then, there exists $r_1>0$ such that for every $R>0$ there exists $T_R>0$ such that for all $y_0\in X$ with
\begin{equation}\label{strip}
    |\langle y_0,\phi_1\rangle-1|<r_1,\qquad \norm{y_0-\langle y_0,\phi_1\rangle\phi_1}\leq R,
\end{equation}
problem \eqref{bil-par-eq} is controllable to the first eigensolution $\varphi_1$ in time $T_R$.
\end{teorema}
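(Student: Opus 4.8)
The plan is to recognize that this statement is not meant to be proved from scratch: every hypothesis of the abstract semi-global theorem of \cite{acue} has already been verified for the pair $\{-\Delta,B\}$ in the course of proving the local result, so the shortest route is to check the list of assumptions and invoke \cite[Theorem 1.4]{acue} (the appropriate one among \cite[Theorems 1.4 and 1.5]{acue}). Concretely, $A=-\Delta$ fulfills \eqref{ipA-intro} since it is self-adjoint with compact resolvent by Remark \ref{compact_resolvent} and is bounded from below because its spectrum is non-negative; $B$ satisfies the spreading condition \eqref{ipB} by hypothesis; Proposition \ref{prop-jnullcontr} gives that $\{-\Delta,B\}$ is $1$-null controllable in every time $T>0$; and the argument in the proof of Theorem \ref{thm-ex-control-graph} shows that the control cost obeys the bound \eqref{bound-control-cost}, i.e. $K(T)\leq e^{C_1/T}$ for $0<T\leq T_0$. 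These four ingredients are exactly what the abstract semi-global result consumes, and it returns the claim.

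To make the mechanism transparent I would also argue directly from the local result already in hand. First I would remove the exponential decay of the target via the change of variable $\tilde y(t):=e^{\lambda_1 t}y(t)$, which turns \eqref{bil-par-eq} into the same type of equation with $-\Delta$ replaced by $-\Delta-\lambda_1 I$ and the first eigensolution replaced by the stationary state $\phi_1$; note that \eqref{gap-required} forces $\lambda_1<\lambda_2$, so the first eigenvalue is simple and, after the shift, the orthogonal complement $\phi_1^\perp$ carries eigenvalues bounded below by $\lambda_2-\lambda_1>0$. I would then split $[0,T_R]$ into two phases. On a first phase $[0,T_1]$ I would take $u\equiv 0$: the free parabolic flow preserves the $\phi_1$-component (it stays equal to $\langle y_0,\phi_1\rangle$) while contracting the orthogonal part at rate $e^{-(\lambda_2-\lambda_1)T_1}$, so $\norm{\tilde y(T_1)-\langle y_0,\phi_1\rangle\phi_1}\leq e^{-(\lambda_2-\lambda_1)T_1}R$. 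On a second phase of fixed length $T_2$ I would use the local controllability to $\phi_1$ furnished by Theorem \ref{thm-ex-control-graph}, whose radius $R_{T_2}>0$ is a fixed positive number.

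The pieces then fit by fixing $r_1:=R_{T_2}/2$ once and for all and choosing $T_1=T_1(R)$ so large that $e^{-(\lambda_2-\lambda_1)T_1}R<R_{T_2}/2$. The triangle inequality gives $\norm{\tilde y(T_1)-\phi_1}\leq|\langle y_0,\phi_1\rangle-1|+e^{-(\lambda_2-\lambda_1)T_1}R<r_1+R_{T_2}/2=R_{T_2}$, so the state at time $T_1$ lies in the local controllability ball, the second phase steers it exactly onto $\phi_1$ in time $T_2$, and one sets $T_R:=T_1(R)+T_2$. Since $r_1$ is frozen independently of $R$ while $T_1(R)$ (hence $T_R$) grows only logarithmically in $R$, this reproduces the structure ``there exists $r_1>0$ such that for every $R>0$ there exists $T_R>0$'' of the statement, with the strip \eqref{strip} being exactly the set the two-phase construction can reach.

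I expect the delicate point, and the reason the result is only \emph{semi-global}, to be the following: the free flow does nothing to the $\phi_1$-component; it can absorb an arbitrarily large orthogonal error by waiting longer, but it cannot reduce the mismatch $|\langle y_0,\phi_1\rangle-1|$. This forces $r_1$ to be chosen a priori below the local radius $R_{T_2}$ of the fixed final phase, which is precisely the width-of-strip restriction in \eqref{strip}. Carrying this out rigorously — keeping the well-posedness estimates uniform across the two phases and justifying the preservation of the $\phi_1$-component under the shifted flow — is the main technical obstacle, but it is exactly the bookkeeping that \cite[Theorem 1.4]{acue} already packages, so in the write-up I would verify the hypotheses and invoke it rather than redo the contraction argument by hand.
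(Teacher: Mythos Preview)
Your proposal is correct and matches the paper's approach: the paper does not give a self-contained proof of Theorem~\ref{teo-glob1} but simply observes that Proposition~\ref{prop-jnullcontr} and the cost estimate obtained in the proof of Theorem~\ref{thm-ex-control-graph} verify the hypotheses of \cite[Theorem~1.4]{acue}, and then invokes that abstract result, with exactly the two-phase heuristic (free evolution to contract the orthogonal component, then local control) you sketch as the underlying mechanism. Your additional change of variable and explicit choice of $r_1$ and $T_1(R)$ merely make explicit what the paper leaves to the cited reference.
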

The interpretation of Theorem \ref{teo-glob1} is the following: if the initial datum lies on a (infinite) strip, defined in \eqref{strip}, then we can steer the solution of \eqref{bil-par-eq} to the first eigensolution in time $T_R$ (see (a) in Figure \ref{fig-global} below). The idea to prove this semi-global result is to let the semigroup of the Laplacian evolve freely till the solution of \eqref{bil-par-eq} enters a suitable neighborhood of the first eigenfunction, and then to apply our local controllability result (see \cite[Theorem 1.4]{acue} for the proof).

Our second semi-global result  below, Theorem \ref{teo-glob2}, ensures the controllability in time $T_R$ of any initial condition $y_0\in X\setminus \phi_1^\perp$ to the evolution of its orthogonal projection along the first eigensolution $\varphi_1(t)$, that is the following trajectory
\begin{equation*}
    \zeta_1(t)=\langle y_0,\phi_1\rangle \varphi_1(t).
\end{equation*}
\begin{teorema}\label{teo-glob2}
Let $-\Delta$ be a Dirichlet-Neumann Laplacian on $X=L^2(\Gi,\R)$ such that the eigenvalues $\{\lambda_k\}_{k\in\N^*}$ satisfy \eqref{gap-required}. Let $B:X\to X$ be a bounded linear operator on $\Gi$ such that \eqref{ipB} holds. 

Then, for any $R>0$ there exists $T_R>0$ such that for all $y_0\in X$ with 
\begin{equation*}
    \norm{y_0- \langle y_0,\phi_1\rangle\phi_1}\leq R |\langle y_0,\phi_1\rangle|
\end{equation*}
problem \eqref{bil-par-eq} is controllable to the trajectory
\begin{equation*}
    \zeta_1(t)=\langle y_0,\phi_1\rangle\varphi_1(t)
\end{equation*}
in time $T_R$.
\end{teorema}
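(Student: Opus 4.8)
The plan is to reduce Theorem \ref{teo-glob2} to the strip result Theorem \ref{teo-glob1} by exploiting the fact that, although \eqref{bil-par-eq} is bilinear in the pair $(u,\psi)$, for a \emph{fixed} control $u\in L^2(0,T)$ it is linear in the state $\psi$. Concretely, from the mild-solution formula and uniqueness one gets the scaling identity $y(t;\alpha y_0,u)=\alpha\,y(t;y_0,u)$ for every $\alpha\in\R$. Hence controllability is invariant under rescaling of the initial datum, as long as the target is rescaled by the same factor; this is exactly what converts the cone condition of Theorem \ref{teo-glob2} into the strip condition \eqref{strip} of Theorem \ref{teo-glob1}.

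First I would dispose of the degenerate case. If $\langle y_0,\phi_1\rangle=0$, the hypothesis $\norm{y_0-\langle y_0,\phi_1\rangle\phi_1}\leq R\,|\langle y_0,\phi_1\rangle|$ forces $y_0=0$, so that $\zeta_1\equiv0$ and the trivial control $u\equiv0$ does the job. I may therefore assume $c:=\langle y_0,\phi_1\rangle\neq0$ and set $\hat y_0:=y_0/c$. Dividing the cone inequality by $|c|$ yields $\norm{\hat y_0-\langle\hat y_0,\phi_1\rangle\phi_1}\leq R$, while $\langle\hat y_0,\phi_1\rangle=1$ gives $|\langle\hat y_0,\phi_1\rangle-1|=0<r_1$, with $r_1$ the constant furnished by Theorem \ref{teo-glob1}. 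Thus $\hat y_0$ lies in the strip \eqref{strip}.

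Applying Theorem \ref{teo-glob1} to $\hat y_0$ then produces a time $T_R>0$ (depending only on $R$) and a control $u\in L^2(0,T_R)$ with $y(T_R;\hat y_0,u)=\varphi_1(T_R)$. By the scaling identity, the same $u$ steers $y_0=c\hat y_0$ to $c\,\varphi_1(T_R)=\langle y_0,\phi_1\rangle\varphi_1(T_R)=\zeta_1(T_R)$, which is the claim. Note that $T_R$ and $u$ serve the whole ray $\{c\hat y_0:c\in\R\setminus\{0\}\}$ at once, as they must, the cone being scale-invariant.

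Alternatively, one can bypass Theorem \ref{teo-glob1} and invoke the abstract statement \cite[Theorem 1.5]{acue} directly, whose hypotheses are precisely \eqref{ipA-intro} (valid for $-\Delta$ by Remark \ref{compact_resolvent} together with the nonnegativity of the Laplacian), the spreading property \eqref{ipB}, the $j$-null controllability of $\{-\Delta,B\}$ from Proposition \ref{prop-jnullcontr}, and the control-cost bound \eqref{bound-control-cost} verified inside the proof of Theorem \ref{thm-ex-control-graph}. I do not expect a genuine obstacle in either route: all the analytic difficulty—the biorthogonal family of Proposition \ref{prop-bio-fam} and the resulting cost estimate—has already been absorbed into Proposition \ref{prop-jnullcontr} and the proof of Theorem \ref{thm-ex-control-graph}. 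The only point demanding care is the bookkeeping of the cone-to-strip reduction, namely checking that the perpendicular bound rescales exactly to $R$ and that the degenerate direction $y_0\in\phi_1^\perp$ is treated separately.
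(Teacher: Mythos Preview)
Your proposal is correct. The paper itself gives no proof at all: it simply writes ``For the proof, see \cite[Theorem 1.5]{acue}'', which is precisely your second route. Your first route---the scaling reduction to Theorem \ref{teo-glob1}---is a more self-contained alternative that makes the cone-to-strip mechanism explicit; it exploits the linearity of the flow $y_0\mapsto y(\cdot;y_0,u)$ for fixed $u$, which is exactly the ingredient that lets one normalize $\langle y_0,\phi_1\rangle$ to $1$ and land inside the strip \eqref{strip} with the first-coordinate deviation equal to zero. Both routes rely on the same analytic input (Proposition \ref{prop-jnullcontr} and the cost estimate from the proof of Theorem \ref{thm-ex-control-graph}), so neither is deeper than the other; your scaling argument just unpacks what \cite[Theorem 1.5]{acue} presumably does internally.
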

For the proof, see \cite[Theorem 1.5]{acue}.

\begin{figure}[h!]
\centering\subfloat[]{
\begin{tikzpicture}[scale=.7]
\fill[blue!30](4,-3)--(6,-3)--(6,3)--(4,3)--cycle;
\draw[] (0,0)  -- (10,0)node[below]{$\phi_1$};
\draw[] (2,4)node[left]{$\phi_1^\perp$} -- (2,-4);
\fill(5,0) node[below]{\footnotesize{$1$}} circle (.05);
\fill(3,0) node[below]{\footnotesize{$\varphi_1(T_R)$}} circle (.05);
\draw[] (4,-4) -- (4,4);
\draw[] (6,-4) -- (6,4);
\draw[] (2,3) node[left]{\footnotesize{$R$}} -- (9,3);
\draw[] (2,-3) node[left]{\footnotesize{$-R$}} -- (9,-3);
\draw[<->] (4,3.5) --node[above]{\footnotesize{$r_1$}} (6,3.5);
\fill(5.5,2) node[above]{\footnotesize{$y_0$}} circle (.05);
\draw[ultra thick,->] (5.5,2) -- (3,0);
\end{tikzpicture}
}\quad
\subfloat[]{
\begin{tikzpicture}[scale=.6]
\fill[blue!30](0,-4.5)--(7,0)--(0,4.5)--cycle;
\fill[blue!30](7,0)--(14,-4.5)--(14,4.5)--cycle;
\coordinate (v2) at (7,0);
\coordinate(v4) at (11.5,2);
\coordinate(v5) at (11.5,0);
\coordinate(v8) at (2,1);
\coordinate(v9) at (2,0);
%\tkzMarkAngle[size=1cm](v5,v2,v4);
%\tkzLabelAngle[pos=1.25](v5,v2,v4){\footnotesize{$\theta$}};
\draw (8,0) arc(0:30:.8);
\node at(8.4,0)[above]{\footnotesize{$\theta$}};
%\tkzMarkAngle[size=1.75cm](v8,v2,v9);
%\tkzLabelAngle[pos=2](v8,v2,v9){\footnotesize{$\hat{\theta}$}};
\draw (0,0)  -- (14,0)node[below]{$\phi_1$};
\draw (7,4.5)node[left]{$\phi_1^\perp$} -- (7,-4.5);
\draw (7,0) -- (13,3.86);
\draw[dashed] (13,3.86) -- (14,4.5);
\draw (7,0) -- (13,-3.86);
\draw[dashed] (13,-3.86) -- (14,-4.5);
\draw (7,0) -- (1,3.86);
\draw[dashed] (1,3.86) -- (0,4.5);
\draw (7,0) -- (1,-3.86);
\draw[dashed] (1,-3.86) -- (0,-4.5);
%\fill(11,0) node[below]{\footnotesize{$1$}} circle (.05);
%\fill(3,0) node[below]{\footnotesize{$\varphi_1$}} circle (.05);
\fill(10,0) node[below]{\footnotesize{$\zeta_1(T_R)$}} circle (.05);
%\fill(3.5,0) node[below]{\footnotesize{$\qquad\hat{\phi}_1(T_R)$}} circle (.05);
\fill(11.5,2) node[above]{\footnotesize{$y_0$}} circle (.05);
%\fill(2,1) node[above]{\footnotesize{$\hat{u}_0$}} circle (.05);
\draw (11.5,2) -- (11.5,0);
\draw (11.5,2) -- (7,0);
%\draw (2,1) -- (2,0);
%\draw (2,1) -- (7,0);
\fill(11.5,0) circle (.05);
%\fill(2,0) circle (.05);
\draw[ultra thick,->] (11.5,2) -- (10,0);
%\draw[ultra thick,->] (2,1) -- (3.5,0);
%\fill(13,-2.5) node{$Q_R$};
\end{tikzpicture}
}
\caption{Figure (a) illustrates the result of Thereom \ref{teo-glob1}: the solution of problem \eqref{bil-par-eq} with initial condition lying in the blue region can be driven to the first eigensolution $\varphi_1$ in time $T_R$ (which is uniform for any $y_0$ in the strip). In figure (b) we highlighted the cone of amplitude $2\arctan(R)$ of initial conditions which can be steered to the trajectory $\zeta_1$ in time $T_R$. Since $R$ is arbitrary, we are able to apply Theorem \ref{teo-glob2} for any $y_0\in X\setminus \phi_1^\perp$.  }\label{fig-global}
\end{figure}
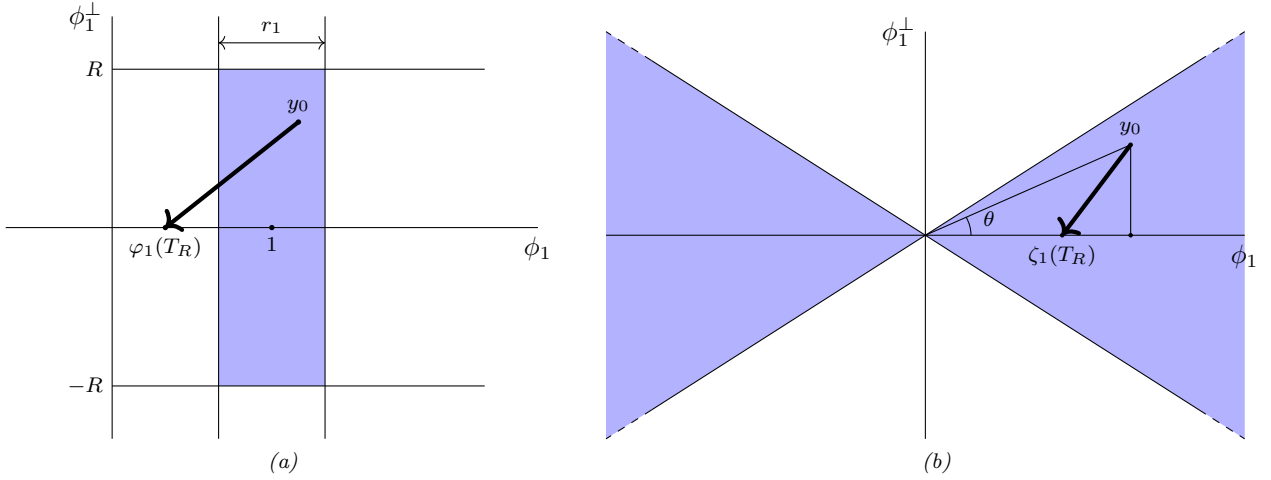
%\begin{assumptionI}
	%The bounded symmetric operator $B$ is such that there exists $C>0$ such that 
%$|\la\phi_j,B\phi_1\ra_{L^2}|\geq\frac{C}{j^{3}}$ for every 
%$j\in\N^*$.
%\end{assumptionI}

\section{Some explicit applications}
\subsection{Star graph}\label{stargraph}

\noindent
Let us consider a star-graph graph $\Gi$ composed by $3$ edges $\{e_1,e_2,e_3\}$ connected in one vertex $v$. We 
parametrize each $e_j$ 
with a coordinate going from $0$ to its length $L_j$ in $v$ (see Figure 
\ref{parametrizzazione}). 

\begin{figure}[H]
	\centering
	\includegraphics[width=\textwidth-100pt]{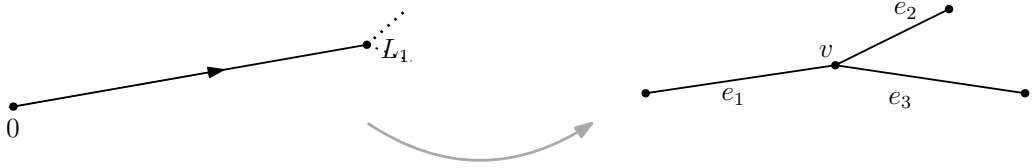}
	\caption{The figure shows the parametrization of a star graph with $3$ edges.}\label{parametrizzazione}
\end{figure}

In such a framework, $\psi\in L^2(\Gi,\R)$ defined on $\Gi$ is composed by three components 
$(\psi^1,\psi^2,\psi^3)$ where each $\psi^j\in L^2(e_j,\R)$. We assume that $-\Delta$ is a Neumann Laplacian and
\begin{align}\label{potential_introS}u\in 
L^2((0,T),\R),\ \ \ \ \ \  \ \ \ B:\psi=(\psi^1,\psi^2,\psi^3)\in L^2(\Gi,\R)\longmapsto 
\big(\mu(x)\psi^1,0,0\big) .\end{align} 
where $\mu$ is a sufficiently regular function. The bilinear system \eqref{main} becomes the problem
\begin{equation}\label{mainS}\begin{split}\begin{cases}
\dd_t\psi^1(t,x)-\dd_x^2\psi^1(t,x)+u(t)\mu(x)\psi^1(t,x)=0,\ \ \ \ \ \ \ \ &t\in(0,T),\ x\in (0,L_1),\\
\dd_t\psi^2(t,x)-\dd_x^2\psi^2(t,x)=0,\ \ \ \ \ \ \ \ &t\in(0,T),\ x\in (0,L_2),\\
\dd_t\psi^3(t,x)-\dd_x^2\psi^3(t,x)=0,\ \ \ \ \ \ \ \ &t\in(0,T),\ x\in (0,L_3),\\
\end{cases}\end{split}
\end{equation}
endowed with the following boundary conditions
\begin{equation}\label{mainS_boundaries}\begin{split}
\psi^1(L_1)=\psi^2(L_2)=&\ \psi^3(L_3),  \ \ \ \ \ \  \ \ \ \ \ \
\dd_x\psi^1(L_1)+\dd_x\psi^2(L_2)+\dd_x\psi^3(L_3)=0,\\
&\dd_x\psi^1(0)=\dd_x\psi^2(0)=\dd_x\psi^3(0)=0.\\
\end{split}
\end{equation}
We notice that the first eigenvalue $\lambda_1=0$ corresponds to the following eigenfunction $$\phi_1=\frac{1}{\sqrt{L_1+L_2+L_3}}(1,1,1).$$ 
The Neumann boundary conditions on the external vertices of $\Gi$ imply that each other $\phi_j$, with $j\geq 2$, satisfies $\phi_j^l(0)=0$ for every $l\leq 3$ which yields
\begin{align}\label{eq_eigenf}\phi_j(x)=\big(a^1_j\cos(x\sqrt{\lambda}_j),a_j^2\cos(x\sqrt{\lambda}_j),
a_j^3\cos(x\sqrt{\lambda} _j)\big), \end{align}
	with $\{a_j^l\}_{l\leq 3}\subset\R$. Now, the ($\NN\KK$) conditions in $V_i$ lead to
	 \begin{equation}\label{eq1}\begin{split}
	 a^1_j\cos(\sqrt{\lambda}_jL_1)=...=a^3_j\cos(\sqrt{\lambda}_jL_3),\ \ \ \ \ \ 
\sum_{l\leq 3} 
a^l_j\sin(\sqrt{\lambda}_jL_l)=0,
\end{split}\end{equation}
When, for instance, all the lengths of the edges of $\Gi$ are equal to $L$, it is easy to see that the sequence of non-repeated eigenvalues is obtained by reordering
\begin{align}\label{spettro_star_ugu}  \Big\{\frac{(2j-1)^2\pi^2}{4 L^2}\Big\}_{j\in\N^*},\ \  \ \ \  \ \ \ 
\Big\{\frac{j^2\pi^2}{L^2}\Big\}_{j\in\N},\end{align}
	which is $\big\{\frac{j^2\pi^2}{4 L^2}\big\}_{j\in\N}.$  Notice that the eigenvalues $\big\{\frac{(2j-1)^2\pi^2}{4 L^2}\big\}_{j\in\N^*}$ have multiplicity $N-1$ and then they are doubles in this specific case. When instead the 
lengths are such that each $L_j/L_k\not\in\Q$, the relations \eqref{eq1} yield the identities
 \begin{equation}\label{eq2}\begin{split}
	 \sum_{l\leq 3} \tan(\sqrt{\lambda}_jL_l)=0,\ \ \ \ \ \ \ \ \ \  \sum_{l\leq 
3}|a_j^l|^2{\sin(L_l\sqrt{\lambda}_j)\cos(L_l\sqrt{\lambda}_j)}=0.\\ 
	 \end{split}
	 \end{equation} 
As a consequence of \eqref{eq2}, the numbers $\sqrt\lambda_j$ with $j\geq 2$ are solutions of the 
transcendental equation
$$\sum_{l\leq 3} \tan(x L_l)=0.$$
The corresponding eigenfunctions are deduced by \eqref{eq_eigenf} with $\{a_j^l\}_{l\leq 3}\subset\R$. We choose 
these coefficients so that 
$\{\phi_j\}_{j\in\N^*}$ forms a Hilbert basis of $L^2(\Gi,\R)$. The orthonormality condition yields
\begin{align}\label{eq3}1=\sum_{l\leq 
3}\int_{0}^{L_l}(a_j^l)^2\cos^2(x\sqrt{\lambda}_j)dx=\sum_{l\leq 
3}(a_j^l)^2\Big(\frac{L_l}{2}+\frac{\cos(L_l\sqrt{\lambda}_j)\sin(L_l\sqrt{\lambda}_j)}{2\sqrt{\lambda}_j}
\Big).\end{align}
Now, we use \eqref{eq2} in \eqref{eq3} which ensures $1=\sum_{l=1}^3(a_j^l)^2{L_l}/{2}$. The 
continuity relation in the Neumann-Kichhoff conditions in $V_i$ implies 
$a_j^l=a_j^k\frac{\cos(\sqrt{\lambda}_j 
L_k)}{\cos(\sqrt{\lambda}_j L_l)}$ for $l\neq k$ and then
	\begin{equation}\label{pistola}
	\begin{split}
(a_j^k)^2\Bigg(L_k+\sum_{l\in\{1,2,3\}\setminus\{k\}}L_l\frac{\cos^2(\sqrt{\lambda}_j L_k)}{\cos^2(\sqrt{\lambda}_j L_l)}\Bigg)={2}.
	\end{split}
	\end{equation}
The relation \eqref{pistola} allows us to identify the parameters $\{a_j^l\}_{l\leq 3}\subset\R$ up to their signs as
	\begin{equation}\label{relation_coef}
	\begin{split}
	|a_j^k|=\sqrt\frac{2\prod_{m\neq k }\cos^2(\sqrt{\lambda}_j 
L_m)}{\sum_{l=1}^3 
L_l\prod_{m\neq l}\cos^2(\sqrt{\lambda}_j L_m)}.	\end{split}
	\end{equation}
\begin{teorema}\label{thm-control-star}
Let $\mu(x)=\cos\big(\frac{x \pi}{2L_1}\big)$ with $x\in (0,L_1).$ If $\{L_j\}_{j\leq 3}\in (\R^+)^N$ is so that
$\big\{1,L_1,...,L_3\big\}$ are linearly independent over $\Q$ and all the ratios $L_k/L_j$ are algebraic irrational numbers, then system \eqref{mainS} is locally controllable to the first eigensolution in any time $T>0$. Moreover, the control $u$ satisfies \eqref{u-bound}.
\end{teorema}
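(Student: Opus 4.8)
The plan is to deduce Theorem \ref{thm-control-star} from the abstract result Theorem \ref{thm-ex-control-graph}, applied with $X=L^2(\Gi,\R)$, with $-\Delta$ the Neumann Laplacian on the three-edge star, with $j=1$, and with the bounded operator $B$ of \eqref{potential_introS}. Accordingly, the whole work reduces to checking the two hypotheses of Theorem \ref{thm-ex-control-graph}: the weak gap condition \eqref{gap-required} and the spreading condition \eqref{ipB} for $j=1$. Once these are in place, the controllability statement and the cost bound \eqref{u-bound} are automatic.

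The gap condition comes for free. Since $\{1,L_1,L_2,L_3\}$ are rationally independent and every ratio $L_k/L_j$ is an algebraic irrational, Proposition \ref{gap_tadpole} yields $\sqrt{\lambda_{k+1}}-\sqrt{\lambda_k}\geq Ck^{-1-\epsilon}$, so \eqref{gap-required} holds with $a_k\asymp k^{-p}$ and $p=1+\epsilon$. To verify \eqref{ipB} with $j=1$, I would start from $\phi_1=(L_1+L_2+L_3)^{-1/2}(1,1,1)$, so that $B\phi_1=(L_1+L_2+L_3)^{-1/2}(\mu,0,0)$. The case $k=1$ is a one-line integral: $\langle B\phi_1,\phi_1\rangle=(L_1+L_2+L_3)^{-1}\int_0^{L_1}\cos(\pi x/2L_1)\,dx=\tfrac{2L_1}{\pi(L_1+L_2+L_3)}\neq 0$. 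For $k\geq 2$, using the eigenfunction form \eqref{eq_eigenf}, $\phi_k^1(x)=a_k^1\cos(x\sqrt{\lambda_k})$, a product-to-sum computation (exploiting $\mu(x)=\cos(\pi x/2L_1)$ and $\sin(\tfrac\pi2\pm\theta)=\cos\theta$) gives
$$\langle B\phi_1,\phi_k\rangle=\frac{a_k^1\cos(\sqrt{\lambda_k}L_1)}{\sqrt{L_1+L_2+L_3}}\cdot\frac{\pi/(2L_1)}{\pi^2/(4L_1^2)-\lambda_k}.$$
Setting $\kappa_k:=a_k^l\cos(\sqrt{\lambda_k}L_l)$ (independent of $l$ by continuity at the internal vertex) and using the normalization $\sum_l(a_k^l)^2L_l=2$ obtained in \eqref{eq3}--\eqref{pistola}, one finds $\kappa_k^2=2\big(\sum_l L_l/\cos^2(\sqrt{\lambda_k}L_l)\big)^{-1}$, whence $|\langle B\phi_1,\phi_k\rangle|\asymp|\kappa_k|/\lambda_k$ for large $k$. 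Since $\kappa_k^2\geq \tfrac{2}{3\max_l L_l}\min_l\cos^2(\sqrt{\lambda_k}L_l)$, condition \eqref{ipB} follows, for a suitable $q$, once one establishes the polynomial lower bound
$$\min_{l\leq 3}\,|\cos(\sqrt{\lambda_k}L_l)|\geq C\lambda_k^{-s},\qquad k\geq 2,$$
for some $s>0$.

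This last estimate is the only real difficulty, and is where the arithmetic hypotheses enter. The frequencies $\sqrt{\lambda_k}$ are the roots of $\sum_l\tan(xL_l)=0$, lying one per interval between consecutive poles of the left-hand side; near a pole $x_0$ with $x_0L_l\in(\tfrac12+\Z)\pi$ one has $|\sqrt{\lambda_k}-x_0|\gtrsim\big|\sum_{m\neq l}\tan(x_0L_m)\big|^{-1}$, so some $\cos(\sqrt{\lambda_k}L_l)$ can be small only if \emph{two} of the numbers $x_0L_1,x_0L_2,x_0L_3$ are simultaneously close to half-integer multiples of $\pi$. (In particular no cosine can vanish exactly, for that would force a second component of $\phi_k$ to vanish and then violate the Kirchhoff condition at the vertex, so each $\langle B\phi_1,\phi_k\rangle$ is genuinely nonzero.) Such a coincidence forces $L_m/L_l$ to be abnormally well approximated by a quotient of odd integers of size $\asymp\sqrt{\lambda_k}$; since $L_m/L_l$ is algebraic irrational, Roth's theorem bounds this approximation from below and produces the estimate above with $s=(1+\epsilon)/2$. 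This is precisely the Diophantine mechanism already used to prove Proposition \ref{gap_tadpole} in Appendix \ref{proof_gap_star} (following \cite[Section 3]{_graphs1}), so the quantitative bound may be imported from there. With both hypotheses of Theorem \ref{thm-ex-control-graph} in force, the conclusion — local controllability to the first eigensolution in every $T>0$ together with the cost bound \eqref{u-bound} — follows immediately.
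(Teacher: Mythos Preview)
Your proof is correct and follows essentially the same route as the paper: verify the weak gap via Proposition~\ref{gap_tadpole}, compute $\langle B\phi_1,\phi_k\rangle$ explicitly, and bound it below using a polynomial lower bound on $\min_l|\cos(\sqrt{\lambda_k}L_l)|$. The paper obtains that last bound by citing \cite[Proposition 27]{_graphs} directly rather than sketching the Roth-type Diophantine argument, and it estimates $|a_k^1|$ and $|\cos(\sqrt{\lambda_k}L_1)|$ separately instead of combining them into your $\kappa_k$; your organization in fact gives the slightly sharper exponent $q=(3+\epsilon)/2$ versus the paper's $q=2+\epsilon$.
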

\begin{proof}
 The result is ensured by referring to Theorem \ref{thm-ex-control-graph}. First, the validity of the spectral gap follows by Lemma \ref{interessante} and Proposition \ref{gap_tadpole}. Second, by direct computations it is possible to deduce that
 $$\la B\phi_1,\phi_1\ra=\la \mu\phi_1^1,\phi_1^1\ra_{L^2(0,L_1)}=\frac{2L_1}{\pi(L_1+L_2+L_3)}\neq 0$$ and, for every $k\geq 2$,
\begin{equation}\label{degeneracy}\begin{split}
\la B\phi_1,\phi_k\ra&=\la \mu\phi_1^1,\phi_k^1\ra_{L^2(0,L_1)}=a_k^1\frac{ 2L_1\pi \cos(\sqrt\lambda_k L_1)}{\pi^2-4L_1^2\lambda_k}.\end{split}\end{equation}
 Thanks to \cite[Proposition 27]{_graphs} (arXiv version \cite[Proposition A.2]{_graphs}), for every $\epsilon>0$, there exists $C_1>0$ such that $|\cos(\sqrt{\lambda_k}L_l)|\geq {C_1}{\lambda_k}^\frac{-1-\epsilon}{2}$ for every $l\leq 3$. This identity and \eqref{relation_coef} ensure the existence of $C_2,C_3>0$ such that
\begin{equation}
	\label{oip}
	\begin{split}
	|a_k^1|&\geq \sqrt{\frac{2}{\sum_{l=1}^3L_l\cos^{-2}(\sqrt{\lambda_k} 
L_l)}}\geq\sqrt{\frac{2}{C_2{\lambda_k^{1+\epsilon}}\sum_{l=1}^3L_l}}\geq \frac{C_3}{{\lambda_k}^\frac{1+\epsilon}{2}},\ \ \ \ 
\ \ \  \ \forall\, k\geq 2.\\
	\end{split}
	\end{equation}
We use again $|\cos(\sqrt{\lambda_k}L_1)|\geq {C_1}{\lambda_k^\frac{-1-\epsilon}{2}}$ in \eqref{degeneracy}. Thanks to \eqref{oip}, there exists $C_4>0$ such that
\begin{equation*}\begin{split}
|\la B\phi_1,\phi_k\ra|&\geq \frac{C_4}{\lambda_k^{2+\epsilon}},\ \ \ \ \ \forall\, k\geq 2.\end{split}\end{equation*}
The result finally follows from Theorem \ref{thm-ex-control-graph}.
\end{proof}

\subsection{Tadpole graph}\label{tadpole}

Let $\Gi$ be a {tadpole graph} composed by two 
edges $\{e_1,e_2\}$ connected in an internal vertex $v$. The edge $e_1$ is self-closing and parametrized in the 
clockwise direction with a coordinate going from $0$ to $L_1$ (the length $e_1$). On the \virgolette{tail} $e_2$, 
we consider a coordinate going from $0$ in the to $L_2$ and we associate the $0$ to the external vertex $\tilde 
v$ (see Figure $\ref{parametrizzazione1}$).

\begin{figure}[H]
	\centering
	\includegraphics[width=\textwidth-60pt]{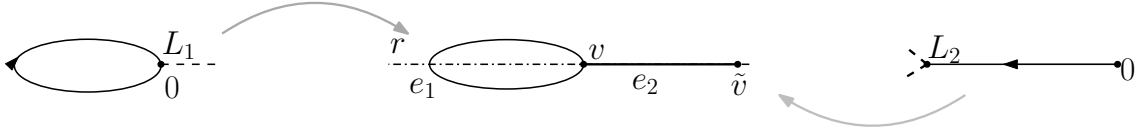}
	\caption{The parametrization of the tadpole graph and its symmetry axis $r$.}\label{parametrizzazione1}
\end{figure}

We consider the Hilbert space $L^2(\Gi,\R)$ composed by functions $\psi=(\psi^1,\psi^2)$ such that each 
$\psi^l\in L^2(e_l,\R)$. We assume that $-\Delta$ is a Neumann Laplacian and
\begin{align}\label{potential_introT}u\in 
L^2((0,T),\R),\ \ \ \ \ \  \ \ \ B:\psi=(\psi^1,\psi^2)\in L^2(\Gi,\R)\longmapsto 
\big(\mu(x)\psi^1,0\big),\end{align} 
where $\mu$ is a sufficiently regular function. The bilinear system \eqref{main} reads as
\begin{equation}\label{mainT}\begin{split}\begin{cases}
\dd_t\psi^1(t,x)-\dd_x^2\psi^1(t,x)+u(t)\mu(x)\psi^1(t,x)=0,\ \ \ \ \ \ \ \ &t\in(0,T),\ x\in (0,L_1),\\
\dd_t\psi^2(t,x)-\dd_x^2\psi^2(t,x)=0,\ \ \ \ \ \ \ \ &t\in(0,T),\ x\in (0,L_2),\\
\end{cases}\end{split}
\end{equation}
endowed with the following boundary conditions
\begin{equation}\label{mainT_boundaries}\begin{split}
\psi^1(L_1)=\psi^1(0)=&\,\psi^2(L_2),  \ \ \ \ \ \  \ 
\dd_x\psi^1(L_1)-\dd_x\psi^1(0)+\dd_x\psi^2(L_2)=0,\ \ \ \ \  \dd_x\psi^2(0)=0.\\
\end{split}
\end{equation}

Now, we refer to the proof of \cite[Theorem 15]{_graphs} (arXiv version \cite[Theorem 3.7]{_graphs}) and we notice that the sequence of eigenfunctions 
$\{\phi_k\}_{k\in\N^*}$ of $-\Delta$ can be assumed to be only composed by symmetric or skew-symmetric functions with 
respect to natural symmetry axis $r$ appearing in $\Gi$ (see Figure \ref{parametrizzazione1}).
If $\phi_k=(\phi_k^1,\phi_k^2)$ is skew-symmetric, then $$\phi^2_k\equiv 
0,\ \ \ \ \  \ \ \phi^1_k(0)=\phi^1_k(L_1/2)=\phi^1_k(L_1)= 0,\ \ \ \  \ \ \ \ 
\dd_x\phi^1_k(0)=\dd_x\phi^1_k(L_1).$$ As a consequence, the eigenvalues corresponding to the skew-symmetric 
eigenFunctions are $\left\{\frac{4 k^2\pi^2}{L_1^2}\right\}_{k\in \N^*}$ and such eigenfunctions are $$\left\{\Big(\sqrt{\frac{2}{L_1}}\sin\Big(x\frac{2k\pi}{L_1}\Big),0\Big)\right\}_{k\in\N^*}.$$
Afterwards, if $\phi_k=(\phi_k^1,\phi_k^2)$ is 
symmetric, then we have $$\dd_x\phi^1_k(L_1/2)=0,\ \ \ \ \ \phi_k^1(\cdot)=\phi_k^1(L_1-\cdot).$$ 
Firstly, we notice that the first eigenvalue $\lambda_1=0$ corresponds to a symmetric eigenfunction $$\phi_1=\frac{1}{\sqrt{L_1+L_2}}(1,1).$$ 
Secondly, the ($\NN$) 
condition
on $\widetilde v$ implies that each other symmetric eigenfunction associated to the eigenvalue $\lambda_k$ have the following formulation
$$\Big\{\Big(a_k^1\cos\Big(\sqrt{\lambda_k}\Big(x-\frac{L_1}{2}\Big)\Big),a_k^2\cos(\sqrt{\lambda_k} 
x)\Big)\Big\}_{k\in\N^*},\ \ \ \ \ \text{for}\ \ \ \ \  \{(a_k^1,a_k^2)\}_{k\in\N^*}\subset\R^2.$$
The $(\NN\KK)$ conditions in $v$ ensure that
$a_k^1\cos(\sqrt{\lambda_k}(L_1/2))=a_k^2\cos(\sqrt{\lambda_k}L_2))$ and 
$$2a_k^1\sin(\sqrt{\lambda_k}(L_1/2))+a_k^2\sin(\sqrt{\lambda_k}L_2))=0.$$
Similarly to what we noticed for the star graph, such eigenvalues can be made explicit when $L_1=L_2$.  
While if $L_1/L_2\not\in\Q$, then $\sqrt\lambda_k$ is a solution of the transcendental equation
\begin{equation}\label{transc_tad}2\tan(x(L_1/2))+\tan(xL_2))=0.\end{equation}
The boundary conditions also yield the following identities, for every $k\geq 2$,
 \begin{equation}\label{eq211}\begin{split}
	(a_k^1)^2\sin(L_1\sqrt{\lambda}_k)+(a_k^2)^2\sin(L_2\sqrt{\lambda}_k)\cos(L_2\sqrt{\lambda}_k)=0.\\ 
	 \end{split}
	 \end{equation} 
As for the star graph, we choose the coefficients $\{a_k^l\}_{l\leq 2}\subset\R$ so that 
$\{\phi_k\}_{k\in\N^*}$ forms a Hilbert basis of $L^2(\Gi,\R)$. The orthonormality condition yields that 
\begin{align}\label{eq311}1=(a_k^1)^2\Big(\frac{L_1}{2}+\frac{\sin(L_1\sqrt{\lambda}_k)}{2\sqrt{\lambda}_k}
\Big)+(a_k^2)^2\Big(\frac{L_2}{2}+\frac{\sin(L_2\sqrt{\lambda}_k)\cos(L_2\sqrt{\lambda}_k)}{2\sqrt{\lambda}_k}
\Big).\end{align}
Now, we use \eqref{eq211} in \eqref{eq311} which ensures $2=(a_k^1)^2{L_1}+(a_k^2)^2{L_2}$. As for the star graph, the 
continuity relation in the Neumann-Kichhoff conditions in $v$ allows us to identify the parameters $\{a_k^l\}_{l\leq 2}\subset\R$, up to their sign, as
\begin{equation}\label{coeff_tad}|a_k^1|=\sqrt\frac{2\cos^2(\sqrt{\lambda}_k 
L_2)}{
L_1\cos^2(\sqrt{\lambda}_k L_m)+L_2\cos^2(\sqrt{\lambda}_k L_1/2)},\ \ \ \ \ \ \ \ \  \ |a_k^1|=\sqrt\frac{2\cos^2(\sqrt{\lambda}_k 
L_1/2)}{L_1\cos^2(\sqrt{\lambda}_k L_m)+L_2\cos^2(\sqrt{\lambda}_k L_1/2)}.\end{equation}
\begin{teorema}\label{thm-control-tad}
Let $\mu(x)=x$ with $x\in (0,L_1).$ If $\{L_1,L_2\}\in (\R^+)^2$ is so that
$\big\{1,L_1,L_2\big\}$ are linearly independent over $\Q$ and the ratio $L_1/L_2$ is an algebraic irrational number, then system \eqref{mainT} is locally controllable to the first eigensolution in any positive time $T>0$. Moreover, the control $u$ satisfies \eqref{u-bound}.
\end{teorema}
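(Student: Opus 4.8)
The plan is to verify the two hypotheses of Theorem \ref{thm-ex-control-graph}, following verbatim the strategy of the proof of Theorem \ref{thm-control-star}. The spectral gap condition \eqref{gap-required} is immediate: since $\Gi$ is a tadpole graph whose edge lengths satisfy the stated arithmetic hypotheses, Proposition \ref{gap_tadpole} yields $\sqrt{\lambda_{k+1}}-\sqrt{\lambda_k}\geq Ck^{-1-\epsilon}$, while Lemma \ref{interessante} provides $\lambda_k\asymp k^2$; together these give \eqref{gap-required} with $a_k\asymp k^{-p}$. It therefore remains only to establish the spreading condition \eqref{ipB} for the operator $B:\psi\mapsto(\mu\psi^1,0)$ with $\mu(x)=x$, at the index $j=1$.

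First I would check that $\la B\phi_1,\phi_1\ra\neq0$. Since $\phi_1=\frac{1}{\sqrt{L_1+L_2}}(1,1)$, a direct computation gives $\la B\phi_1,\phi_1\ra=\frac{1}{L_1+L_2}\int_0^{L_1}x\,dx=\frac{L_1^2}{2(L_1+L_2)}\neq0$. Next, for $k\geq2$ I would evaluate $\la B\phi_1,\phi_k\ra=\frac{1}{\sqrt{L_1+L_2}}\int_0^{L_1}x\,\phi_k^1(x)\,dx$, treating separately the two families of eigenfunctions. For a skew-symmetric eigenfunction $\phi_k^1=\sqrt{2/L_1}\sin(2k\pi x/L_1)$, integration by parts gives the clean value $-\frac{1}{\sqrt{L_1+L_2}}\sqrt{2/L_1}\,\frac{L_1^2}{2k\pi}$, which never vanishes and decays like $k^{-1}\asymp\lambda_k^{-1/2}$. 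For a symmetric eigenfunction $\phi_k^1=a_k^1\cos(\sqrt{\lambda_k}(x-L_1/2))$, the substitution $y=x-L_1/2$ annihilates the odd part and leaves
\begin{equation*}
\la B\phi_1,\phi_k\ra=\frac{a_k^1}{\sqrt{L_1+L_2}}\cdot\frac{L_1\sin(\sqrt{\lambda_k}\,L_1/2)}{\sqrt{\lambda_k}}.
\end{equation*}

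The main work, and the step I expect to be the genuine obstacle, is to bound this last quantity polynomially from below. This requires two ingredients of number-theoretic nature. On the one hand, I would bound $|a_k^1|$ from below via the normalization formula \eqref{coeff_tad}, which reduces the task to controlling the cosines $\cos(\sqrt{\lambda_k}\,L_1/2)$ and $\cos(\sqrt{\lambda_k}\,L_2)$ from below; on the other hand, I would need $|\sin(\sqrt{\lambda_k}\,L_1/2)|$ bounded below by a negative power of $\lambda_k$. Both estimates should come from the same mechanism used in the star case, namely the uniform lower bound $|\cos(\sqrt{\lambda_k}L_l)|\geq C\lambda_k^{(-1-\epsilon)/2}$ of \cite[Proposition 27]{_graphs}, combined with the transcendental relation \eqref{transc_tad} linking $\sin(\sqrt{\lambda_k}\,L_1/2)$ to $\cos(\sqrt{\lambda_k}\,L_2)$, whose applicability rests on the algebraic irrationality of $L_1/L_2$ and the $\Q$-independence of $\{1,L_1,L_2\}$. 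Feeding these lower bounds into the explicit formula above would yield $|\la B\phi_1,\phi_k\ra|\geq C\lambda_k^{-r}$ for some $r>0$, uniformly in $k\geq2$, which is precisely \eqref{ipB}. The conclusion, including the control estimate \eqref{u-bound}, then follows directly from Theorem \ref{thm-ex-control-graph}.
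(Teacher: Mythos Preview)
Your proposal is correct and follows essentially the same route as the paper's proof. The only point of precision is the Diophantine input for the symmetric eigenfunctions: the paper invokes \cite[Remark 28]{_graphs} (arXiv version \cite[Proposition A.3]{_graphs}) applied to the pair $\{L_1/2,L_2\}$ rather than Proposition~27, since the transcendental relation \eqref{transc_tad} involves $L_1/2$, and this yields simultaneously the lower bounds on $|\sin(\sqrt{\lambda_k}L_1/2)|$, $|\cos(\sqrt{\lambda_k}L_1/2)|$ and $|\cos(\sqrt{\lambda_k}L_2)|$ that feed into \eqref{coeff_tad} and \eqref{degeneracy1}.
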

\begin{proof}
 The proof follows the one of Theorem \ref{thm-control-star}. First, the spectral gap is valid thanks to Lemma \ref{interessante} and Proposition \ref{gap_tadpole}. Second, we can compute
 $$\la B\phi_1,\phi_1\ra=\la \mu\phi_1^1,\phi_1^1\ra_{L^2(0,L_1)}=\frac{L_1}{2(L_1+L_2)}\neq 0.$$ We consider a symmetric eigenfunction $\phi_k$ and
\begin{equation}\label{degeneracy1}\begin{split}
\la B\phi_1,\phi_k\ra&=\la \mu\phi_1^1,\phi_k^1\ra_{L^2(0,L_1)}=a_k^1\frac{ L_1\sin(\sqrt\lambda_k L_1/2)}{\sqrt\lambda_k}.\end{split}\end{equation}
Now, from \eqref{transc_tad}, we have the following relation
\begin{equation*}2\cos(\sqrt{\lambda_k}L_2)\sin(\sqrt{\lambda_k}L_1/2)+\cos(\sqrt{\lambda_k}L_1/2)\sin(\sqrt{\lambda_k}L_2)=0.\end{equation*}
We consider \cite[Remark 28]{_graphs} (arXiv version \cite[Proposition A.3]{_graphs}) with $\{L_1/2,L_2\}$ and the previous relation yields that, for every $\epsilon>0$, there exists $C_1>0$ such that 
\begin{equation}\label{lowerbound}\begin{split}|\sin(\sqrt{\lambda_k}L_1/2)|\geq \frac{C_1}{\sqrt{\lambda_k}^{1+\epsilon}},\ \ \ \ \   
|\sin(\sqrt{\lambda_k}L_2)|\geq \frac{C_1}{\sqrt{\lambda_k}^{1+\epsilon}},\\
|\cos(\sqrt{\lambda_k}L_1/2)|\geq \frac{C_1}{\sqrt{\lambda_k}^{1+\epsilon}},\ \ \ \ \   
|\cos(\sqrt{\lambda_k}L_2)|\geq \frac{C_1}{\sqrt{\lambda_k}^{1+\epsilon}}.\end{split}\end{equation}
If we use the second line of \eqref{lowerbound} in \eqref{coeff_tad}, then we deduce the existence of $C_2>0$ such that
\begin{equation*}
	\begin{split}
	|a_k^1|&\geq \frac{C_2}{\sqrt{\lambda_k}^{1+\epsilon}}.\\
	\end{split}
	\end{equation*}
We use now the first line of \eqref{lowerbound} in \eqref{degeneracy1}. The previous relation implies that, for every $\epsilon>0$, there exists $C_3>0$ such that,
\begin{equation*}\begin{split}
|\la B\phi_1,\phi_k\ra|&\geq \frac{C_3}{\sqrt{\lambda_k}^{3+2\epsilon}}.\end{split}\end{equation*}
When we consider $\phi_k$ as a skew-symmetric eigenfunction, we have 
\begin{equation*}\begin{split}|\la B\phi_1,\phi_k\ra|&\geq \frac{L_1}{j\pi}.\end{split}\end{equation*}
The result finally follows from Theorem \ref{thm-ex-control-graph}.
\end{proof}

\subsection{Space filtering in presence of multiple eigenvalues}
In this section, we discuss how to deal with systems with multiple eigenvalues for which the spectral gap \eqref{gap-required} is not satisfied. Examples of this situation are the star graphs and the tadpoles with some edges of same length. For instance, in the specific case of a star graph with $3$ edges of equal length, the spectrum is given by \eqref{spettro_star_ugu}, where multiples eigenvalues appear.

We apply the technique of filtation of the Hilbert space $L^2(\Gi,\R)$ in order to obtain simple spectrum. The main idea is to investigate the existence of a subspace $\Hi$, preserved by the dynamics of \eqref{main}, where the spectrum of the Laplacian turns out to be simple. In such case, controllability can be studied in $\Hi$ under suitable assumptions on $B$. This idea was already adopted in \cite[Section 6]{_graphs1} and \cite{_ammari, _ammari1} for the bilinear controllability of the Schr\"odinger equation. The application to our framework is straightforwards and, here, we just present the main steps in an explicit example.

Let $\Gi$ be a star graph with four edges parametrized as in Section \ref{stargraph}. We assume $L_1=L_2=L_3=1$ and $L_4=L>0$. We consider a Dirichlet Laplacian $-\Delta$ on $\Gi$ and its spectrum contains the eigenvalues
$$\tilde\lambda_k={k^2\pi^2},\ \ \ \  \ \ \forall\, k\in\N^*.$$
We notice that $\{\tilde\lambda_k\}_{k\in\N^*}$ are multiple eigenvalues (if $L\in\R\setminus\Q$, then they are actually double eigenvalues). Two families of corresponding orthonormalized eigenfunctions are $\{f_k\}_{k\in\N^*}$ and $\{g_k\}_{k\in\N^*}$ such that
\needspace{5pt
}$$f_k=(\sin(k\pi x),-\sin(k\pi x),0,0),\ \ \ \  \ \ \forall\, k\in\N^*,$$
$$g_k=\Bigg(\sqrt\frac{1}{2}\sin(k\pi x),\sqrt\frac{1}{2}\sin(k\pi x),-\sin(k\pi x),0\Bigg),\ \ \ \  \ \ \forall\, k\in\N^*.$$
Now, we introduce the space $\Hi=\overline{\spn\{f_k\ :\ k\in\N^*\}}^{\ L^2}$ and we notice that
$$\Hi=\big\{(\tilde\psi,-\tilde\psi,0,0)\ :\ \tilde\psi\in L^2((0,1),\R)\big\}.$$
The spectrum of the Dirichlet Laplacian $-\Delta$ in $\Hi$ is composed by the eigenvalues $\{\tilde\lambda_k\}_{k\in\N^*}$ which are now simple. Thus, Theorem \ref{thm-ex-control-graph} can be applied in this space $\Hi$ for a suitable operator $B$ defined as follows.

\begin{itemize}
    \item We consider $B$ so that, for any initial data in $\Hi$, the corresponding solution of \eqref{main} remains in $\Hi$ for any time. A possible choice of $B$ is
$$B:\psi=(\psi^1,\psi^2,\psi^3,\psi^4)\in L^2(\Gi,\R)\longmapsto B\psi=(\mu \psi^1,\mu\psi^2,\mu_1\psi^3,\mu_2\psi^4),$$
where $\mu,\mu_1$ and $\mu_2$ are bounded functions. Indeed, 
$$\text{for any }\ \psi=(\tilde\psi,-\tilde\psi,0,0)\in\Hi,\ \text{ we have }\ B\psi=(\mu\tilde\psi,-\mu\tilde\psi,0,0)\in\Hi.$$

\item We choose $B$ so that, conditions \eqref{ipB} are verified with respect to $\{f_k\}_{k\in\N^*}.$ In our case, one can identify $\Hi$ with $L^2(0,1)$ and $B$ with the multiplication operator by a function $\mu$ in $L^2(0,1)$. This is possible because each element in $\Hi$ is uniquely determined by its first component which belongs to $L^2(e_1)\equiv L^2(0,1)$. So, our problem boils down to the controllability of a one-dimensional control system in the interval $(0,1)$ (see the examples in \cite{acue, acu, cu}). A possible choice of the function $\mu$ is $\mu(x)=x^2$ as explained in \cite[Section 6.1]{acue}. Finally, the operator $B$ defined by
$$B\psi=(x^2 \psi^1,x^2\psi^2,\mu_1\psi^3,\mu_2\psi^4),$$
with $\mu_1$, $\mu_2$ any bounded functions, allows to locally control our original problem in $\Hi$ to any eigensolution $$(e^{-j^2\pi^2 t}\sin(j\pi x),-e^{-j^2\pi^2 t}\sin(j\pi x),0,0).$$

\end{itemize}

This strategy is not only valid on compact graphs in presence multiple eigenvalues, but also on infinite graphs. In such a framework, it is possible to construct eigenfunctions for the Laplacian in presence of suitable substructures of the graph. It is true even though there are edges of infinite length and the Laplacian has not compact resolvent. In the span of these eigenfunctons, one can study the controllability of the system as explained above. For further details, we refer to \cite{_ammari,_ammari1} where this idea is applied for the Schr\"odinger equation.

\section*{Acknowledgements}
This work was partly supported by the National Group for Mathematical Analysis, Probability and Applications (GNAMPA) of the Italian Istituto Nazionale di Alta Matematica ``Francesco Severi''; moreover, the first and third author acknowledge support by the Excellence Department Project awarded to the Department of Mathematics, University of Rome Tor Vergata, CUP E83C18000100006.

The second author was financially supported by the Agence nationale de la recherche of the French government through the grant {\it ISDEEC} (ANR-16-CE40-0013).

\appendix

\section{Appendix: Proof of Proposition \ref{gap_tadpole}}\label{proof_gap_star} 
In the current appendix, we use the techniques developed in \cite[Section\ 3]{_graphs1} in order to prove the validity of the spectral gap of Proposition \ref{gap_tadpole}.

As in \cite[Section\ 3]{_graphs1}, we start by considering an ordered sequence of pairwise distinct numbers ${\bf {\upnu}}=(\nu_k)_{k\in\Z^*}\subset\R$ such that 
there exist $M\in\N^*$ and $\delta>0$ such that 
\begin{equation}\begin{split}\label{gapp11}
\inf_{\{k\in\Z^*\ :\ k+M\neq 0 \}}|{\nu_{k+M}}-{\nu_k}|\geq\delta M.\\
\end{split}\end{equation}\needspace{2mm}
\noindent
This property implies that it does not exist $M$ consecutive indices $k,k+1\in\Z^*$ such that 
$|\nu_{k+1}  -  \nu_{k}| < \delta$ and then, there exist some $j\in\Z^*\setminus\{-1\}$ such that 
$|\nu_{j+1}-\nu_{j}|\geq\delta$. This distribution of the numbers ${\bf {\upnu}}$ allows us to define a partition of $\Z^*$ in subsets $\{E_m\}_{m\in\Z^*}$ (see \cite[Section 3.3]{_graphs1} for further details on this partition). The partition of $\Z^*$ in subsets $\{E_m\}_{m\in\Z^*}$ also defines an equivalence 
relation in $\Z^*$ and $\{E_m\}_{m\in\Z^*}$ are the equivalence classes corresponding to such relation. Clearly, $|E_m|\leq M$ thanks to 
\eqref{gapp11}. Let $s(m)$ 
be the smallest element of $E_m$. For every ${\bf {x}}:=\{x_k\}_{k\in\Z^*}\subset\R$ and $m\in\Z^*,$ we define ${\bf x}^m$ as the vector in $\R^{|E_m|}$ composed by those elements of ${\bf x}$ with indices in $E_m$, {\it i.e.}
$${\bf 
{x}}^m:=\{x^m_l\}_{l\leq |E_m|},\ \ \ \ \  : \ \ \  \ \ x_l^m=x_{s(m)+(l-1)},\ \ \ \ \ \forall\, l\leq |E_m|.$$
We are finally ready to introduce the matrix $F_m({\bf {\upnu}}^m):\R^{|E_m|}\rightarrow \R^{|E_m|}$ with components
\begin{equation*}
\begin{split}
F_{m;j,k}({\bf {\upnu}}^m):=\begin{cases}
\prod_{\underset{ l\leq k}{l\neq j}}(\nu^m_j-\nu_l^m)^{-1},\ \ \ \ \ & j\leq k,\\
1,\ \ \ \ \ \ \ \ \ \ \ \ \ \ \ \ & j=k=1,\\
0,\ \ \ \ \ \ \ \ \ \ \ \ \ \ \ \ & j>k,\\
\end{cases}\ \ \ \ \ \ \ \ \ \ \ \ \ \ \forall\, j,k\leq {|E_m|}.
\end{split}
\end{equation*}For each $k\in\Z^*$, there exists $m(k)\in\Z^*$ such that $k\in E_{m(k)}$, while $s(m(k))$ 
represents the smallest element of $E_{m(k)}$. In the following Lemma, we state \cite[Lemma\ 3.16]{_graphs1} which characterizes the matrix $
F_{m;j,k}({\bf {\upnu}}^m)$.

\begin{lemma}{\cite[Lemma\ 3.16]{_graphs1}}\label{entire1}
Let ${\bf {\upnu}}:=(\nu_k)_{k\in\Z^*}$ be an ordered sequence of pairwise distinct real numbers satisfying 
$(\ref{gapp11})$. Let $G$ be an entire function such that $G\in L^\infty(\R,\R)$. Let $J,I>0$ be such 
that 
$$|G(z)|\leq J e^{I|z|},\ \ \ \ \  \ \forall\, z\in\C.$$ If $({\nu_k})_{k\in\Z^*}$ are simple zeros of $G$ such that there 
exist 
$\tilde d\geq 0$, $C>0$ such that 
$$|G'(\nu_k)|\geq \frac{C}{|k|^{1+\tilde d}},\ \ \ \ \ \forall\, k\in\Z^*,$$ then there exists $C>0$ so that 
$$Tr\Big(F_{m}({\bf \upnu}^m)^*F_{m}({\bf \upnu}^m)\Big)\leq C\min\{|l|\in E_m\}^{2(1+{\tilde d})},\ \ \ \ \ \ 
\forall\, m\in\Z^*.$$
\end{lemma}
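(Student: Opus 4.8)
The plan is to read the trace as the squared Hilbert--Schmidt norm, $\mathrm{Tr}(F_m^*F_m)=\sum_{j\le k}|F_{m;j,k}|^2$, and, since $|E_m|\le M$, to reduce the whole statement to a \emph{uniform} pointwise bound on the entries: it will suffice to prove that $|F_{m;j,k}|\le C\,\min\{|l|:l\in E_m\}^{1+\tilde d}$ for every $j\le k\le|E_m|$, because each block contributes at most $M^2$ nonzero entries. First I would record two structural consequences of the partition $\{E_m\}$. Since the blocks are cut precisely at the gaps $\ge\delta$, every within-block consecutive gap is $<\delta$, so $\mathrm{diam}\{\nu^m_l:l\in E_m\}<(M-1)\delta=:S$; moreover the global indices filling $E_m$ are consecutive, hence all of magnitude comparable to $\min\{|l|:l\in E_m\}$ up to the additive constant $M$. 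These two facts are what will convert the hypothesis on $G'$ into the stated right-hand side.

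The heart of the argument is to relate the a priori dangerous products of reciprocal within-block differences to the derivative of $G$. I would factor $G=P_m H_m$, where $P_m(z)=\prod_{l\in E_m}(z-\nu^m_l)$ is the block node polynomial and $H_m$ is entire of the \emph{same} exponential type $I$, since only finitely many simple zeros have been divided out. Evaluating the Leibniz rule $G'=P_m'H_m+P_mH_m'$ at the zero $\nu^m_j$ (where $P_m$ vanishes) gives the key identity
$$\frac{1}{|P_m'(\nu^m_j)|}=\prod_{\substack{l\in E_m\\ l\ne j}}|\nu^m_j-\nu^m_l|^{-1}=\frac{|H_m(\nu^m_j)|}{|G'(\nu^m_j)|},$$
and the left-hand side is exactly the last-column entry $|F_{m;j,|E_m|}|$. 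Combining this with the hypothesis $|G'(\nu^m_j)|\ge C|\kappa|^{-(1+\tilde d)}$, where $\kappa$ is the global index of $\nu^m_j$, reduces the last-column estimate to a uniform upper bound on $|H_m(\nu^m_j)|$. This is the step I expect to be the main obstacle: the within-block differences may be arbitrarily small (clustered zeros), so one cannot hope to isolate $\nu^m_j$ inside $P_m$ by a small contour. The resolution is precisely that this clustering has been removed in passing to $H_m$, which therefore has no zero within distance $S$ of the block and ought to be harmless.

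To make the bound on $H_m(\nu^m_j)$ uniform I would apply the mean-value (Cauchy) formula on a circle of \emph{fixed} radius $r:=2S+1$ centred at $\nu^m_j$, so that $|H_m(\nu^m_j)|\le\max_{|z-\nu^m_j|=r}|G(z)|/|P_m(z)|$. On this circle two $m$-independent estimates hold. By the Phragm\'en--Lindel\"of principle, an entire function of exponential type $I$ bounded on $\R$ satisfies $|G(z)|\le\norm{G}_{\infty}\,e^{I|\mathrm{Im}\,z|}\le\norm{G}_{\infty}\,e^{Ir}$, which is independent of $m$ because $r$ is fixed; and since every block zero lies within $S$ of $\nu^m_j$, the reverse triangle inequality gives $|z-\nu^m_l|\ge r-S=S+1\ge1$, whence $|P_m(z)|\ge(r-S)^{|E_m|}\ge1$. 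Thus $|H_m(\nu^m_j)|\le\norm{G}_{\infty}\,e^{Ir}=:C_H$ uniformly, and the last-column entries obey the desired bound. For the remaining entries ($k<|E_m|$) I would simply write $F_{m;j,k}=F_{m;j,|E_m|}\prod_{k<l\le|E_m|}(\nu^m_j-\nu^m_l)$ and bound the extra product by $S^{M}$, again a constant. Summing the at most $M^2$ squared entries, each controlled by $C_H^2C^{-2}|\kappa|^{2(1+\tilde d)}\asymp\min\{|l|:l\in E_m\}^{2(1+\tilde d)}$, yields the stated trace estimate.
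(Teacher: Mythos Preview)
The paper does not prove this lemma---it is quoted from \cite[Lemma 3.16]{_graphs1} without argument---so there is no in-paper proof to compare your approach against. Your proof is correct: the factorisation $G=P_mH_m$ with $P_m$ the block node polynomial, the identity $|F_{m;j,|E_m|}|=|H_m(\nu^m_j)|/|G'(\nu^m_j)|$, and the uniform bound on $|H_m(\nu^m_j)|$ via the maximum principle on a circle of fixed radius $r=2S+1$ (controlling $|G|$ there by the Phragm\'en--Lindel\"of estimate $|G(z)|\le\|G\|_{L^\infty(\R)}e^{I|\mathrm{Im}\,z|}$, and $|P_m|$ from below by $(r-S)^{|E_m|}\ge1$) all work as stated; the reduction of the earlier columns to the last column by multiplying back at most $M$ within-block differences, each bounded by $S$, is also fine. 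One cosmetic remark: you assert that $H_m$ inherits the exponential type $I$, but you never use this---your actual bound on $H_m(\nu^m_j)$ goes through $|G|/|P_m|$ on the circle, which is all that is needed.
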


As a consequence of the previous Lemma, we have the following one.

\begin{lemma}\label{entire2}
Let ${\bf {\upnu}}:=(\nu_k)_{k\in\Z^*}$ be an ordered sequence of pairwise distinct real numbers satisfying 
$(\ref{gapp11})$. Let $G$ be an entire function such that $G\in L^\infty(\R,\R)$. Let $J,I>0$ be such 
that 
$$|G(z)|\leq J e^{I|z|},\ \ \ \ \ \ \forall\, z\in\C.$$ If $({\nu_k})_{k\in\Z^*}$ are simple zeros of $G$ such that there 
exist 
$\tilde d\geq 0$, $C>0$ such that 
$$|G'(\nu_k)|\geq \frac{C}{|k|^{1+\tilde d}},\ \ \ \ \  \ \forall\, k\in\Z^*,$$ then, for every $\epsilon>0$, there exists $C>0$ such that
\begin{equation*}|\nu_{k+1}-\nu_k|\geq C  |k|^{-1-\epsilon},\ \ \ \ \ \ \ \ \forall\, k\in\Z^*\setminus\{-1\}\end{equation*}
\end{lemma}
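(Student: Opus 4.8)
The plan is to read off the consecutive-gap estimate from the Frobenius-norm bound on the matrices $F_m({\bf {\upnu}}^m)$ supplied by Lemma \ref{entire1}. First I would split the argument according to whether the indices $k$ and $k+1$ lie in the same block of the partition $\{E_m\}_{m\in\Z^*}$. If they belong to different blocks, then $k$ is the largest index of some $E_m$; since by construction the partition cuts are placed exactly at the large gaps, one has $|\nu_{k+1}-\nu_k|\geq\delta$, and as $|k|^{-1-\tilde d}\leq1$ the claimed inequality holds trivially. Hence the whole content of the lemma is concentrated in the case $k,k+1\in E_m$, which I treat next.

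So suppose $k,k+1\in E_m$ and let $l_0,l_0+1\in\{1,\dots,|E_m|\}$ be the corresponding local indices, so that $\nu_{k+1}-\nu_k=\nu^m_{l_0+1}-\nu^m_{l_0}$. The key observation is that the diagonal entry of $F_m({\bf {\upnu}}^m)$ in position $(l_0+1,l_0+1)$ is, by definition (note $l_0+1\geq2$, so we are in the generic branch, not the special case $j=k=1$),
\begin{equation*}
F_{m;l_0+1,l_0+1}({\bf {\upnu}}^m)=\prod_{l=1}^{l_0}\big(\nu^m_{l_0+1}-\nu^m_l\big)^{-1},
\end{equation*}
which contains precisely the factor $(\nu^m_{l_0+1}-\nu^m_{l_0})^{-1}$ we wish to control. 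Isolating it gives
\begin{equation*}
\big|\nu^m_{l_0+1}-\nu^m_{l_0}\big|^{-1}=\big|F_{m;l_0+1,l_0+1}({\bf {\upnu}}^m)\big|\prod_{l=1}^{l_0-1}\big|\nu^m_{l_0+1}-\nu^m_l\big|.
\end{equation*}
The trailing product is harmless: each factor is at most the diameter of the block, which by \eqref{gapp11} and the construction of $\{E_m\}$ (each block has at most $M$ elements and all its internal gaps are $<\delta$) does not exceed $M\delta$; thus the product is bounded by the constant $(M\delta)^{M-1}$ depending only on $M$ and $\delta$.

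It then remains to estimate the single entry $F_{m;l_0+1,l_0+1}$. Since the Frobenius norm dominates every matrix entry, I would write
\begin{equation*}
\big|F_{m;l_0+1,l_0+1}({\bf {\upnu}}^m)\big|\leq\sqrt{Tr\big(F_m({\bf {\upnu}}^m)^*F_m({\bf {\upnu}}^m)\big)}\leq\sqrt{C}\,\big(\min\{|l|\in E_m\}\big)^{1+\tilde d},
\end{equation*}
the last inequality being exactly Lemma \ref{entire1}. Combining the three displays yields $|\nu_{k+1}-\nu_k|\geq c\,(\min\{|l|\in E_m\})^{-1-\tilde d}$. Finally, because $E_m$ is a set of at most $M$ consecutive integers containing $k$, one has $|k|-M\leq\min\{|l|\in E_m\}\leq|k|$, so the two quantities are comparable for large $|k|$ (and everything stays bounded for the finitely many small $|k|$). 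This produces $|\nu_{k+1}-\nu_k|\geq C|k|^{-1-\tilde d}$ for all $k\in\Z^*\setminus\{-1\}$, which is the asserted estimate, the role of $\epsilon$ being played by $\tilde d$ (the hypothesis on $G'$ being available for every such exponent).

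I expect the main obstacle to be the combinatorial bookkeeping around the blocks rather than any sharp inequality: one must justify that the internal diameter of each $E_m$ is uniformly bounded and that $\min\{|l|\in E_m\}\asymp|k|$, both of which rest on the precise partition construction of \cite[Section 3.3]{_graphs1}. A secondary point requiring care is that the product bound uses only an \emph{upper} estimate on the extraneous factors, so their possible smallness causes no difficulty; the estimate degrades only through the trace bound and the index comparison, both of which I have isolated above.
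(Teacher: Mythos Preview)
Your proposal is correct and follows essentially the same route as the paper's proof: the same split into the different-block case (trivial, gap $\geq\delta$) and the same-block case, where the diagonal entry $\prod_{l<k+1}(\nu_{k+1}-\nu_l)^{-1}$ is bounded via the trace estimate of Lemma \ref{entire1}, the extraneous factors are controlled by the uniform block diameter $\leq (M-1)\delta$, and $\min\{|l|\in E_m\}$ is compared to $|k|$ exactly as you do. Your presentation via the Frobenius norm is slightly more explicit, but the argument is the same.
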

\begin{proof}
As introduced before, from $(\ref{gapp11})$, it does not exist $M$ consecutive $k,k+1\in\Z^*$ such that $|\nu_{k+1}  -  \nu_{k}| < \delta$. It is clear that if $k\in E_{m(k)}$ and $k+1\in E_{m(k)+1}$, then $E_{m(k+1)}=E_{m(k)+1}$ and $|\nu_{k+1}  -  \nu_{k}| > \delta$. Let us consider now the case such that $k,k+1\in E_{m(k)}$ which yields $E_{m(k)}=E_{m(k+1)}$. First, we notice that, for every $l,n\in E_{m(k)}$, we have \begin{equation}\label{eq_12}|\nu_{l}  -  \nu_{n}| <g:= (M-1)\delta,\end{equation}
where $g$ clearly does not depend on $k$. Second, thanks to Lemma \ref{entire1}, there exist $C_1,C_2>0$ so that 
\begin{equation}\begin{split}\label{eq_11}\prod_{ \underset{l< k+1}{l\in  E_{m(k)}}}(\nu_{k+1}-\nu_l)^{-2}&\leq 1+ \sum_{n\in E_{m(k)}\setminus\{s(m(k))\}}\prod_{ \underset{l< n}{l\in E_{m(k)}}}(\nu_{k+1}-\nu_l)^{-2}=Tr\Big(F_{m(k)}({\bf \upnu}^{m(k)})^*F_{m(k)}({\bf \upnu}^{m(k)})\Big)\\
&\leq C_1\min\{|l|\in E_{m(k)}\}^{2(1+{\tilde d})}\leq C_1 (|k|-M+1)^{2(1+{\tilde d})}\leq C_2 |k|^{2(1+{\tilde d})}.\end{split}\end{equation}
Third, due to \eqref{eq_12}, for $C_3:=\min\{1,g^{-M+2}\}$, it yields that
$$(\nu_{k+1}-\nu_k)^2=(\nu_{k+1}-\nu_k)^2\frac{\prod_{ \underset{l< k+1}{l\in E_{m(k)}\setminus\{k\}}}(\nu_{k+1}-\nu_l)^{2}}{\prod_{ \underset{l< k+1}{l\in E_{m(k)}\setminus\{k\}}}(\nu_{k+1}-\nu_l)^{2}}\geq C_3 \prod_{ \underset{l< k+1}{l\in E_{m(k)}}}(\nu_{k+1}-\nu_l)^{2}.$$
The statement follows by gathering the last relation with the identity \eqref{eq_11}.
\end{proof}
We are finally ready to prove Proposition \ref{gap_tadpole}. 
\begin{proof}[Proof of Proposition \ref{gap_tadpole}]
The statement for the cases where $\Gi$ is either a tadpole, a 
two-tails tadpole or a double-rings graph is provided as \cite[Proposition 8]{_graphs} (arXiv version \cite[Proposition 2.5]{_graphs}). We now prove the result when $\Gi$ is a $N$ edges star graph. We assume $\lambda_1\neq 0$. When $\lambda_1=0$, it enough to separate the first eigenvalue from the rest of the sequence. 
Let us denote by ${\bf {\upnu}}:=(\nu_k)_{k\in\Z^*}$ the sequence with elements
$$\nu_k=\sqrt{\lambda_k},\ \ \ \ \ \forall\, k>0;\ \ 
\ 
\ \  \ \nu_k=-\sqrt{\lambda_{-k}},\ \ \ \ \ \forall\, k< 0.$$
Let $I_1\subseteq \{1,...,N\}$ be the set of indices of those edges containing an 
external vertex equipped with Neumann boundary condition and $I_2:=\{1,..,N\}\setminus I_1$. We introduce the entire map
	\begin{equation*}\begin{split}
	G(x):=&\prod_{l\in I_2}\sin({x} L_l)\prod_{l\in I_1}\cos({x} L_l)\Big(\sum_{l\in I_2} \cot({x} L_l)+\sum_{l\in 
I_1} \tan({x} L_l)\Big). 
	\end{split}\end{equation*}
The map $G(x)$ verifies the hypotheses of Lemma \ref{entire2}, as explained in the proof of \cite[Theorem 5.1]{_graphs1}, which gives the claim.
\end{proof}

\section{Appendix: Proof of Proposition \ref{prop-bio-fam}}\label{bioApp}
The aim of this appendix is to adapt to the current setting the techniques developed in the works \cite{Bio2gap} and \cite{bio_primo} in order to prove Proposition \ref{prop-bio-fam}. The mentioned paper considers the biorthogonal family to a family of exponentials $\{e^{\lambda_kt}\}_{k\in\N^*}$ where $\{\lambda_k\}_{k\in\N^*}$ is a sequence of ordered positive real numbers such that there exist $0<\gamma<\gamma^*$ verifying, for every $k\in\N^*$,
\begin{equation*}
\begin{split}
\begin{cases}
\sqrt{\lambda_{2k}}-\sqrt{\lambda_{2k -1 }}\geq \gamma,\\\\
\sqrt{\lambda_{2k+1}}-\sqrt{\lambda_{2k }}\geq \gamma^*.
\end{cases}
\end{split}
\end{equation*}
The authors provide an upper bound for the $L^2-$norm of each element of such biorthogonal family.

Let $\lfloor r\rfloor$ be the entire part of $r\in\R$. We introduce the notation, for $\rho>0$,
$$ N_k(\rho):=card\{m\in\N^*\ :\ 0<|\lambda_m-\lambda_k|\leq \rho\}.$$

\begin{lemma}\label{biort1}
Let $\{\lambda_k\}_{k\in\N^*}$ be a sequence of ordered non-negative real numbers.
Assume that there exist $M\in\N^*$, $\gamma>0$ and an order sequence of decreasing positive real numbers $\{a_k\}_{k\in\N^*}$ verifying, for every $k\in\N^*$,
\begin{equation}\label{HP1}
\begin{split}
\begin{cases}
\sqrt{\lambda_{k+M}}-\sqrt{\lambda_{k }}\geq \gamma,\\\\
\sqrt{\lambda_{k+1}}-\sqrt{\lambda_{k }}\geq a_k.
\end{cases}
\end{split}
\end{equation} Then, for all $k\in\N^*$, we have $N_k(\rho)=0$ for $\rho \in \big(0, a_{k}(a_{k} + 2   \sqrt\lambda_1)\big)$. In addition, for every $k\in\N^*$ and $\rho>0$
$$N_m(\rho)\leq  2\left(\frac{\sqrt\rho}{\gamma}+1\right) M.$$
\end{lemma}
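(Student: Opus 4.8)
The plan is to establish the two assertions separately, both by passing between $\lambda_k$ and $\sqrt{\lambda_k}$, where the gap hypotheses \eqref{HP1} are stated.

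For the first assertion (the absence of eigenvalues near $\lambda_k$) I would begin by estimating the two consecutive gaps around $\lambda_k$ through the factorization $\lambda_{k+1}-\lambda_k=(\sqrt{\lambda_{k+1}}-\sqrt{\lambda_k})(\sqrt{\lambda_{k+1}}+\sqrt{\lambda_k})$. The second inequality in \eqref{HP1} bounds the first factor below by $a_k$, while $\sqrt{\lambda_{k+1}}+\sqrt{\lambda_k}\ge 2\sqrt{\lambda_1}+a_k$ since the sequence is nondecreasing with $\lambda_1$ its smallest term; this yields $\lambda_{k+1}-\lambda_k\ge a_k(a_k+2\sqrt{\lambda_1})$. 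For the gap below, the same computation gives $\lambda_k-\lambda_{k-1}\ge a_{k-1}(a_{k-1}+2\sqrt{\lambda_1})$, and since $\{a_k\}$ is decreasing we have $a_{k-1}\ge a_k$, so this quantity is again at least $a_k(a_k+2\sqrt{\lambda_1})$. Using the monotonicity of $\{\lambda_m\}$ once more, every $m\neq k$ satisfies $|\lambda_m-\lambda_k|\ge a_k(a_k+2\sqrt{\lambda_1})$, whence $N_k(\rho)=0$ for all $\rho<a_k(a_k+2\sqrt{\lambda_1})$; the index $k=1$ needs no separate treatment, as only $m>1$ occur there.

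For the second assertion I would exploit that, by monotonicity of $\{\lambda_m\}$, the set $\{m:|\lambda_m-\lambda_k|\le\rho\}$ is a block of consecutive indices $\{p,p+1,\dots,q\}$, so that $N_k(\rho)\le q-p+1$. Writing $\mu_m=\sqrt{\lambda_m}$, the uniform $M$-gap $\mu_{m+M}-\mu_m\ge\gamma$ telescopes to $\mu_{p+jM}\ge\mu_p+j\gamma$; comparing with $\mu_q$ bounds the number of full steps of length $M$ fitting inside the block by $(\mu_q-\mu_p)/\gamma$, and hence $q-p+1\le(\tfrac{\mu_q-\mu_p}{\gamma}+1)M$. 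It then remains to control $\mu_q-\mu_p$: since $\lambda_p\ge\lambda_k-\rho$ and $\lambda_q\le\lambda_k+\rho$ we have $\lambda_q-\lambda_p\le 2\rho$, and the elementary inequality $\sqrt{a}-\sqrt{b}\le\sqrt{a-b}$ gives $\mu_q-\mu_p\le\sqrt{2\rho}$. Inserting this and using $\sqrt{2}\le 2$ and $1\le 2$ delivers the stated bound $N_k(\rho)\le 2(\tfrac{\sqrt{\rho}}{\gamma}+1)M$.

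I expect the routine parts to be the algebraic manipulations. The only genuinely delicate points are bookkeeping ones: ensuring the lower-gap estimate survives the passage $a_{k-1}\ge a_k$ (precisely where monotonicity of $\{a_k\}$ enters), and that the endpoint estimate for $\mu_q-\mu_p$ stays uniform in $\rho$ even when $\lambda_k-\rho<0$ — a case that causes no trouble, since $\lambda_p\ge\max\{0,\lambda_k-\rho\}$ keeps $\lambda_q-\lambda_p\le 2\rho$ valid regardless. The main conceptual step is the telescoping argument converting the uniform $M$-gap on $\sqrt{\lambda_m}$ into a counting bound on consecutive indices.
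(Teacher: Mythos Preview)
Your proposal is correct and follows essentially the same route as the paper. The first assertion is argued identically (factor $\lambda_{k\pm1}-\lambda_k$ and invoke monotonicity of $\{a_k\}$), and for the second assertion both you and the paper convert the $M$-gap on $\sqrt{\lambda_m}$ into an index bound via the elementary inequality $\sqrt a-\sqrt b\le\sqrt{a-b}$; the only cosmetic difference is that the paper splits the counting set at $k$ into two halves, each of length at most $(\sqrt\rho/\gamma+1)M$, whereas you bound the whole block $q-p$ at once and then absorb the resulting $\sqrt2$ into the factor $2$.
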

\begin{proof}
The first relation is proved as follows. For $k\in\N^*\setminus\{1\},$ we notice
$$\lambda_{k}-\lambda_{k-1}=\left(\sqrt{\lambda_{k}}-\sqrt{\lambda_{k-1}}\right)\left(\sqrt{\lambda_{k}}+\sqrt{\lambda_{k-1}}\right)\geq a_{k-1}\left(a_{k-1}+2\sqrt{\lambda_{k-1}}\right)\geq a_{k-1}\left(a_{k-1}+2\sqrt{\lambda_1}\right)$$ A similar inequality holds when we consider the gap $\lambda_{k+1}-\lambda_{k}$. Now, the closest element to $\lambda_k$ belonging to $\{\lambda_k\}_{k\in\N^*}$ is either $\lambda_{k+1}$, or $\lambda_{k-1}$, which yields the first relation. The estimation for $N_1(\cdot)$ follows equivalently.

The last relation is a consequence of the first hypothesis in \eqref{HP1}. First, we notice that for every $k,m\in\N^*$ such that $\left|\lambda_{k}-\lambda_{m}\right|\leq \rho$, we have 
\begin{align}\label{gap1}\left|\sqrt{\lambda_{k}}-\sqrt{\lambda_{m}}\right|\leq \sqrt{\left|\lambda_{k}-\lambda_{m}\right|}\leq \sqrt\rho.\end{align}
Second, we assume $|k-m|>M$ and \eqref{HP1} yields
$$\left|\sqrt{\lambda_{k}}-\sqrt{\lambda_{m}}\right|\geq \left\lfloor\frac{|k-m|}{M}\right\rfloor \gamma \geq\left(\frac{|k-m|}{M}-1\right) \gamma.$$
Thanks to the last relation and \eqref{gap1}, we obtain
\begin{align}\label{gap2}|k-m|\leq M\left(\frac{\left|\sqrt\lambda_{k}-\sqrt\lambda_{m}\right|}{\gamma}+1\right)\leq  \left(\frac{\sqrt\rho}{\gamma}+1\right) M.\end{align}
Obviously, such relation is also valid when $|k-m|\leq M $.
Let $k_*$ be the smallest natural number such that $\lambda_{k}-\lambda_{k_*}\leq \rho$. Let $k^*$ be the largest natural number such that $\lambda_{k^*}-\lambda_{k}\leq \rho$. We conclude the proof by observing that, thanks to \eqref{gap2}, we have 
$$N_k(\rho)= k^*-k_*= k^*-k + k-k_* \leq  2\left(\frac{\sqrt\rho}{\gamma}+1\right) M.$$
\end{proof}

In what follows, we assume that $\{\lambda_k\}_{k\in\N^*}$ is a sequence of ordered positive real numbers such that $\sum_{k\in\N^*} \lambda_k^{-1}<+\infty $. Such choice allows us to define the following sequence of functions 
$$F_k(z):=\prod_{m\in\N^*\setminus\{k\}}\left(1-\frac{i z- \lambda_k}{\lambda_m-\lambda_k}\right),\quad \forall\, k\in\N^*.$$
We refer to \cite[Section\ 3.3]{Bio2gap} for further details on their well-posedness .

\begin{lemma}\label{biort2}
Let $\{\lambda_k\}_{k\in\N^*}$ be a sequence of ordered non-negative real numbers such that $\sum_{k\in\N^*} \lambda_k^{-1}<+\infty $. Assume that the hypotheses of Lemma \ref{biort1} are verified.
For every $k\in\N^*$, we have the following estimate
$$\ln \left|F_k\left(z-i\lambda_k\right)\right| \leq\int_0^{+\infty} N_k(\rho)\frac{|z|}{\rho^2 +\rho |k|} d\rho.$$\end{lemma}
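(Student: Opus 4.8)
The plan is to collapse the infinite product into a series of nonnegative logarithms, bound each term by an elementary inequality, and then rewrite the resulting series as an integral against the counting function $N_k$. Throughout I take $z\in\R$, which is the relevant regime: the shift $z-i\lambda_k$ evaluates $F_k$ on the horizontal line $\{\operatorname{Im}=-\lambda_k\}$ parametrized by the real frequency $z$. First I would simplify the shifted argument. Since $i(z-i\lambda_k)-\lambda_k=iz$, the numerator $iw-\lambda_k$ of every factor collapses to $iz$, so that
$$F_k(z-i\lambda_k)=\prod_{m\in\N^*\setminus\{k\}}\Big(1-\frac{iz}{\lambda_m-\lambda_k}\Big).$$
For real $z$ each factor has modulus $\sqrt{1+z^2/(\lambda_m-\lambda_k)^2}\ge 1$, so passing to logarithms turns the product into a series of \emph{nonnegative} terms,
$$\ln\big|F_k(z-i\lambda_k)\big|=\sum_{m\neq k}\ln\sqrt{1+\frac{z^2}{(\lambda_m-\lambda_k)^2}}.$$
This series converges because the summability assumption $\sum_m\lambda_m^{-1}<\infty$ forces the general term to decay like $z^2/(2\lambda_m^2)$; it is also exactly the hypothesis under which the products $F_k$ are well defined (see \cite[Section 3.3]{Bio2gap}).

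Next I would bound each summand by the elementary inequality $\sqrt{1+t^2}\le 1+|t|$, applied with $t=z/(\lambda_m-\lambda_k)$, which gives
$$\ln\sqrt{1+\frac{z^2}{(\lambda_m-\lambda_k)^2}}\le \ln\Big(1+\frac{|z|}{|\lambda_m-\lambda_k|}\Big).$$
The key observation is that the right-hand side has the integral representation
$$\ln\Big(1+\frac{|z|}{|d|}\Big)=\int_{|d|}^{\infty}\frac{|z|}{\rho^2+\rho|z|}\,d\rho ,$$
valid for any $d\neq 0$, which follows from the partial-fraction identity $\frac{|z|}{\rho(\rho+|z|)}=\frac1\rho-\frac1{\rho+|z|}$ together with $\big[\ln\frac{\rho}{\rho+|z|}\big]_{|d|}^{\infty}=\ln\frac{|d|+|z|}{|d|}$. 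Writing each term as $\int_0^\infty \chi_{\{\rho\ge|\lambda_m-\lambda_k|\}}\,\frac{|z|}{\rho^2+\rho|z|}\,d\rho$, summing over $m\neq k$, and exchanging sum and integral (legitimate by Tonelli, since all integrands are nonnegative) yields
$$\ln\big|F_k(z-i\lambda_k)\big|\le\int_0^\infty\Big(\sum_{m\neq k}\chi_{\{\rho\ge|\lambda_m-\lambda_k|\}}\Big)\frac{|z|}{\rho^2+\rho|z|}\,d\rho=\int_0^\infty N_k(\rho)\,\frac{|z|}{\rho^2+\rho|z|}\,d\rho ,$$
since by definition $\sum_{m\neq k}\chi_{\{\rho\ge|\lambda_m-\lambda_k|\}}=\#\{m:0<|\lambda_m-\lambda_k|\le\rho\}=N_k(\rho)$. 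This is the asserted estimate, with integrand $\tfrac{|z|}{\rho^2+\rho|z|}$.

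I do not expect any single step to be genuinely hard; the argument is a textbook counting-function estimate. The points that require care are, first, the correct simplification $iw-\lambda_k=iz$ of the shifted argument, on which everything hinges, and second, the justification of the interchange of summation and integration. The latter is clean here precisely because every factor has modulus at least $1$, so the series of logarithms consists of nonnegative terms and Tonelli applies without any integrability subtlety; moreover $N_k(\rho)$ is finite for each $\rho$ and vanishes for small $\rho$ (both by Lemma \ref{biort1}), so the integral is harmless at the lower endpoint and the exact kernel poses no convergence issue. The only delicate choice is the elementary bound $\sqrt{1+t^2}\le 1+|t|$, which is what trades the sharp kernel $\tfrac{z^2}{\rho(\rho^2+z^2)}$ arising from the exact logarithm for the slightly weaker but more convenient kernel $\tfrac{|z|}{\rho^2+\rho|z|}$ appearing in the statement.
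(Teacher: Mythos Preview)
Your argument is the standard counting-function estimate that the paper defers to \cite[Lemma~3.3]{Bio2gap}, and the mechanics (simplification $i(z-i\lambda_k)-\lambda_k=iz$, the integral representation of $\ln(1+|z|/|d|)$, and the Tonelli exchange) are all correct. Two remarks. First, your restriction to real $z$ is not quite ``the relevant regime'': the next lemma performs the change of variable $\tilde z=z-i\lambda_k$ in order to bound $|F_k|$ on the real axis, which forces $z=\tilde z+i\lambda_k$ with $\tilde z$ real, hence complex $z$. The fix is immediate: drop the exact modulus formula and use the triangle inequality
\[
\Bigl|1-\frac{iz}{\lambda_m-\lambda_k}\Bigr|\le 1+\frac{|z|}{|\lambda_m-\lambda_k|},
\]
valid for every $z\in\C$; your integral representation and the exchange of sum and integral then go through verbatim. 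Second, you are right that the kernel should read $\tfrac{|z|}{\rho^2+\rho|z|}$: the $|k|$ in the displayed statement is a typo, as is confirmed by the evaluations $\int_0^\infty\tfrac{\sqrt\rho\,|z|}{\rho^2+\rho|z|}\,d\rho=\pi\sqrt{|z|}$ and $\int_d^\infty\tfrac{|z|}{\rho^2+\rho|z|}\,d\rho=\ln\bigl(1+\tfrac{|z|}{d}\bigr)$ used verbatim in the proof of Lemma~\ref{biort3}.
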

\begin{proof}
The result is proved exactly as \cite[Lemma 3.3]{Bio2gap}.
\end{proof}
\begin{lemma}\label{biort3}
Assume that the hypotheses of Lemma \ref{biort2} are verified.
For every $k\in\N^*$, we have the following estimate
$$\left|F_k(z)\right|\leq \left (1+ \frac{|z|+\lambda_k}{a_{k}\left(a_{k} + 2   \sqrt\lambda_1\right)}\right)^M e^{\frac{2M\pi }{\gamma}\sqrt{|z|+\lambda_k}}.$$
\end{lemma}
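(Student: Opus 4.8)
The plan is to combine the integral representation of Lemma \ref{biort2} with the two counting estimates of Lemma \ref{biort1}, reducing the whole statement to two elementary one-dimensional integrals. Throughout I write $R:=|z|+\lambda_k$ and $\rho_0:=a_k(a_k+2\sqrt{\lambda_1})$, so that the base of the announced polynomial factor is exactly $1+R/\rho_0$. First I would pass from the shifted function controlled in Lemma \ref{biort2} to $F_k$ itself: replacing $z$ by $z+i\lambda_k$ there gives
\[
\ln\bigl|F_k(z)\bigr|\le\int_0^{+\infty}N_k(\rho)\,\frac{|z+i\lambda_k|}{\rho\bigl(\rho+|z+i\lambda_k|\bigr)}\,d\rho .
\]
Since $|z+i\lambda_k|\le|z|+\lambda_k=R$ and, for each fixed $\rho>0$, the map $a\mapsto a/\bigl(\rho(\rho+a)\bigr)$ is increasing on $[0,\infty)$, I may majorize $|z+i\lambda_k|$ by $R$ inside the integral and obtain the clean starting point
\[
\ln\bigl|F_k(z)\bigr|\le R\int_0^{+\infty}\frac{N_k(\rho)}{\rho(\rho+R)}\,d\rho .
\]

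Next I would insert the information on $N_k$. By the first conclusion of Lemma \ref{biort1} the integrand vanishes on $(0,\rho_0)$, so the integral effectively runs over $[\rho_0,\infty)$; on that range the second conclusion gives $N_k(\rho)\le\frac{2M}{\gamma}\sqrt\rho+2M$. This separates the bound into a constant part and a square-root part, each of which I evaluate in closed form. For the constant part, partial fractions yield the exact identity
\[
R\int_{\rho_0}^{+\infty}\frac{d\rho}{\rho(\rho+R)}=\Bigl[\ln\tfrac{\rho}{\rho+R}\Bigr]_{\rho_0}^{+\infty}=\ln\!\Bigl(1+\tfrac{R}{\rho_0}\Bigr),
\]
which, weighted by the $O(1)$ bound on $N_k$, exponentiates to the algebraic prefactor with base $1+R/\rho_0$ recorded in the statement. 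For the square-root part, extending the lower limit back to $0$ (the integrand being nonnegative) and substituting $\rho=Rs^2$ give
\[
R\int_0^{+\infty}\frac{d\rho}{\sqrt\rho\,(\rho+R)}=2\sqrt R\int_0^{+\infty}\frac{ds}{s^2+1}=\pi\sqrt R ,
\]
so that the $\frac{2M}{\gamma}\sqrt\rho$ contribution produces exactly the exponential factor $e^{\frac{2M\pi}{\gamma}\sqrt R}=e^{\frac{2M\pi}{\gamma}\sqrt{|z|+\lambda_k}}$. Adding the two contributions and exponentiating yields the claimed estimate.

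The step I expect to be most delicate is the interplay between the integral representation of Lemma \ref{biort2} and the lower cutoff $\rho_0$. It is precisely the vanishing $N_k(\rho)=0$ for $\rho<\rho_0$ — that is, the \emph{non-uniform} gap $a_k$ entering solely through the endpoint $\rho_0=a_k(a_k+2\sqrt{\lambda_1})$ — that keeps the logarithm $\ln(1+R/\rho_0)$ finite and confines all dependence on the fading gap to the algebraic prefactor; without it the constant part of $N_k$ would make the integral diverge logarithmically at $\rho=0$. The two resulting integrals are then routine, and the only other point requiring care is the monotone majorization of $|z+i\lambda_k|$ by $R$, which is what converts the shift by $i\lambda_k$ in Lemma \ref{biort2} into the clean dependence on $|z|+\lambda_k$ of the final bound.
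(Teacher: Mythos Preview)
Your proposal is correct and follows essentially the same route as the paper's own proof: combine Lemma~\ref{biort2} with the two counting bounds of Lemma~\ref{biort1}, split the resulting integral into its $\sqrt\rho$-part (yielding the exponential factor via $R\int_0^\infty\frac{d\rho}{\sqrt\rho\,(\rho+R)}=\pi\sqrt R$) and its constant part over $[\rho_0,\infty)$ (yielding $\ln(1+R/\rho_0)$), and exponentiate. The only cosmetic difference is the order of operations---you shift $z\mapsto z+i\lambda_k$ and invoke the monotonicity $|z+i\lambda_k|\le R$ at the outset, whereas the paper first bounds $|F_k(z-i\lambda_k)|$ in terms of $|z|$ and performs the change of variable $\tilde z=z-i\lambda_k$ at the end.
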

\begin{proof}
The proof follows by adapting to our context the techniques developed in the proof of \cite[Lemma 3.2]{Bio2gap}. Let  us focus on the case $k\in\N^*\setminus\{1\}$. The proof for $k=1$ follows from the same reasons by using the specific relation from Lemma \ref{biort1}.  First, thanks to Lemma \ref{biort1} and Lemma \ref{biort2}, we have $$\ln|F_k(z-i \lambda_k)|\leq \frac{2M}{\gamma}\int_0^{+\infty}\frac{\sqrt{\rho}|z|}{\rho^2 +\rho |k|}d\rho+ M\int_{a_{k}\left(a_{k} + 2   \sqrt\lambda_1\right)}^{+\infty}\frac{|z|}{\rho^2 +\rho |k|}d\rho,\quad\forall\,k\in\N^*\setminus\{1\}.
$$
As it has been proved in the aforementioned work, the first integral appearing in the previous relations is bounded by $\pi\sqrt{|z|}$, while the second one by $\ln\left (1+ \frac{|z|}{a_{k}(a_{k} + 2   \sqrt\lambda_1)}\right)$. Therefore, we have 
$$\ln|F_k(z-i \lambda_k)|\leq \frac{2M\pi }{\gamma}\sqrt{|z|}+ M\ln\left (1+ \frac{|z|}{a_{k}\left(a_{k} + 2   \sqrt\lambda_1\right)}\right),\quad\forall\,k\in\N^*\setminus\{1\},$$
which implies the result by acting the change of variable $\tilde z= z-i\lambda_k $ in the following resulting inequality
$$|F_k(z-i \lambda_k)|\leq \left (1+ \frac{|z|}{a_{k}\left(a_{k} + 2   \sqrt\lambda_1\right)}\right)^M e^{\frac{2M\pi }{\gamma}\sqrt{|z|}},\quad\forall\,k\in\N^*\setminus\{1\}.$$
\end{proof}

\begin{lemma}\label{biort4}
Assume that the hypotheses of Lemma \ref{biort2} are verified.
There exists $C>0$ independent of $k\in\N^*$, $\{a_k\}_{k\in\N^*},$ $\gamma>0$ and $z\in \C$ such that
$$|F_k(z)|\leq C\left(1+ \frac{\gamma^2}{a_{k}\left(a_{k} + 2   \sqrt\lambda_1\right)}\right)^M e^{\frac{4M\pi }{\gamma}\sqrt{|z|+\lambda_k}},\quad\forall\,k\in\N^*.$$
\end{lemma}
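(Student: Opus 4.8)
The plan is to derive the claimed estimate directly from Lemma~\ref{biort3}, whose bound
$$|F_k(z)|\leq\Big(1+\frac{|z|+\lambda_k}{a_k(a_k+2\sqrt{\lambda_1})}\Big)^M e^{\frac{2M\pi}{\gamma}\sqrt{|z|+\lambda_k}}$$
already displays the correct exponential profile but carries the \emph{wrong} polynomial factor: one depending on $|z|+\lambda_k$ rather than on $\gamma^2$. The idea is to trade this surplus dependence on $|z|+\lambda_k$ in the base of the power for an extra factor of the same exponential type, paying for it precisely with the gap between the target exponent $\tfrac{4M\pi}{\gamma}$ and the exponent $\tfrac{2M\pi}{\gamma}$ furnished by Lemma~\ref{biort3}. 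Concretely, I would fix $k\in\N^*$, abbreviate $w:=|z|+\lambda_k$ and $D_k:=a_k(a_k+2\sqrt{\lambda_1})>0$, and split into the two regimes $w\leq\gamma^2$ and $w>\gamma^2$.

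In the regime $w\leq\gamma^2$ the conclusion is immediate. Since $D_k>0$, one has $(1+w/D_k)^M\leq(1+\gamma^2/D_k)^M$, and trivially $e^{\frac{2M\pi}{\gamma}\sqrt w}\leq e^{\frac{4M\pi}{\gamma}\sqrt w}$, so Lemma~\ref{biort3} already yields the assertion with constant $1$.

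The substantive case is $w>\gamma^2$. Here the key elementary step is that, because $D_k\geq 0$,
$$\frac{1+w/D_k}{1+\gamma^2/D_k}=\frac{D_k+w}{D_k+\gamma^2}=1+\frac{w-\gamma^2}{D_k+\gamma^2}\leq 1+\frac{w-\gamma^2}{\gamma^2}=\frac{w}{\gamma^2},$$
where the inequality uses $w-\gamma^2>0$ together with $D_k+\gamma^2\geq\gamma^2$. Raising to the $M$-th power gives $(1+w/D_k)^M\leq(w/\gamma^2)^M(1+\gamma^2/D_k)^M$, so it remains to absorb $(w/\gamma^2)^M$ into an exponential. Setting $s:=\sqrt{w}/\gamma>1$, the scalar map $s\mapsto s^2 e^{-2\pi s}$ is bounded on $(0,\infty)$ by a universal constant $C'$, whence $(w/\gamma^2)^M=s^{2M}=(s^2 e^{-2\pi s})^M e^{2M\pi s}\leq (C')^M e^{\frac{2M\pi}{\gamma}\sqrt w}$. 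Combining these with Lemma~\ref{biort3} produces
$$|F_k(z)|\leq (C')^M\Big(1+\frac{\gamma^2}{D_k}\Big)^M e^{\frac{2M\pi}{\gamma}\sqrt w}\,e^{\frac{2M\pi}{\gamma}\sqrt w}=(C')^M\Big(1+\frac{\gamma^2}{D_k}\Big)^M e^{\frac{4M\pi}{\gamma}\sqrt w},$$
which is exactly the desired bound; taking $C:=\max\{1,(C')^M\}$ then covers both regimes.

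I do not expect a genuine obstacle here beyond bookkeeping; the one point requiring attention is the uniformity of the constant. The statement permits $C$ to depend on $M$ (indeed $C=(C')^M$) but demands independence of $k$, $\{a_k\}$, $\gamma$ and $z$. This is secured because the reduction $\tfrac{D_k+w}{D_k+\gamma^2}\leq w/\gamma^2$ holds uniformly over all $D_k\geq 0$, hence uniformly in $k$, $a_k$ and $\gamma$, while the auxiliary inequality $s^2\leq C' e^{2\pi s}$ involves a purely numerical $C'$; the dependence on $z$ enters only through $w$ and has been fully converted into the exponential factor.
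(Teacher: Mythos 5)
Your proof is correct and follows essentially the same route as the paper: both start from Lemma~\ref{biort3} and absorb the factor $\bigl((|z|+\lambda_k)/\gamma^2\bigr)^M$ into the exponential using the slack between $\tfrac{2M\pi}{\gamma}$ and $\tfrac{4M\pi}{\gamma}$. The paper does this in one line via the elementary inequality $\tfrac{|z|+\lambda_k}{\gamma^2}\leq e^{\sqrt{|z|+\lambda_k}/\gamma}$, so your two-regime split and ratio manipulation are just a slightly longer bookkeeping of the same idea.
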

\begin{proof}
The above property follows from Lemma \ref{biort3} and from the inequality  $\frac{|z|+\lambda}{\gamma^2}\leq e^\frac{\sqrt{|z|+\lambda}}{\gamma}.$
\end{proof}

The remaining part of the proof of Proposition \ref{prop-bio-fam} is the same of the corresponding one of \cite[Theorem 2.3]{Bio2gap} and \cite[Theorem 2.4]{bio_primo}. Here, we just underline the main strategy and we refer to \cite[Section 4.4 and Section 4.5]{bio_primo} for further details. The core of this part is to construct the biorthogonal family fixed $T>0$. Let $N'\geq 2$. We define $$T':=\min\big\{T,M^2\gamma^{-1}\big\},\ \ \ \ \ \  \ \ C_{N',T'}:=\frac{T'}{2\sum_{k=N'}^\infty\frac{1}{k^3}},$$  $$ a_k:=\frac{C_{N',T'}}{k^2},\ \ \ \ \ \  \ \ P_{N',T'}(z):=e^{iz\frac{T'}{2}} \sum_{k=N'}^\infty \cos(a_k z).$$ 
\begin{lemma}\label{lemma_theta}
The function $P_{N',T'}$ is entire over $\C$ and satisfies 
\begin{itemize}
\item $P_{N',T'}(0)=1$;
\item for every $z\in\C$ such that $Imm(z)\geq 0$, we have $|P_{N',T'}(z)|\leq 1 $;
\item for every $z\in\C$, we have $|e^{-i\frac{T'}{2}}P_{N',T'}|\leq e^{|z|\frac{T'}{2}}$.
\end{itemize}
Moreover, there exists $\theta_0>0$ and $\theta_1>0$, both independent of $N'$ and $T'$, such that $P_{N',T'}$ satisfies \begin{itemize}
    \item if $\Big(\frac{C_{N',T'}|x|}{\theta_0}\Big)^{\frac{1}{2}}+1\geq N'$, then $\ln|P_{N',T'}(x)|\leq -\frac{\theta_1}{2^3}\Big(\frac{C_{N',T'}|x|}{\theta_0}\Big)^\frac{1}{2}$;
        \item if $\Big(\frac{C_{N',T'}|x|}{\theta_0}\Big)^{\frac{1}{2}}+1\leq N'$, then $\ln|P_{N',T'}(x)|\leq -\frac{\theta_1}{N'^3}\Big(\frac{C_{N',T'}|x|}{\theta_0}\Big)^{2}$.
\end{itemize}
\end{lemma}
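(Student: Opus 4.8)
The plan is to regard $P_{N',T'}$ as the product of the pure phase $e^{izT'/2}$ with the infinite product $\Pi(z):=\prod_{k=N'}^{\infty}\cos(a_kz)$ appearing in its definition, and to read off the first three properties from three elementary facts: local uniform convergence of $\Pi$, the bound $|\cos(w)|\le e^{|\mathrm{Im}\,w|}$ for $w\in\C$, and the normalization $\sum_{k=N'}^{\infty}a_k=T'/2$ built into the choice of $C_{N',T'}$. Convergence is immediate since $1-\cos(a_kz)=O(a_k^2|z|^2)$ and $\sum_{k\ge N'}a_k^2<\infty$, so $\Pi$ is entire and $\Pi(0)=1$, whence $P_{N',T'}(0)=1$. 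For $\mathrm{Im}\,z\ge 0$ I would combine $|e^{izT'/2}|=e^{-\mathrm{Im}(z)\,T'/2}$ with $|\cos(a_kz)|\le e^{a_k\,\mathrm{Im}\,z}$ to get $|P_{N',T'}(z)|\le e^{\mathrm{Im}(z)(\sum_{k\ge N'}a_k-T'/2)}=1$; likewise $|e^{-izT'/2}P_{N',T'}(z)|=|\Pi(z)|\le e^{|z|\sum_{k\ge N'}a_k}=e^{|z|T'/2}$ for every $z\in\C$.

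The substance of the lemma is the pair of decay estimates, which I would establish on the real axis, where $|e^{ixT'/2}|=1$ and hence $\ln|P_{N',T'}(x)|=\sum_{k=N'}^{\infty}\ln|\cos(a_kx)|$; by evenness I may take $x\ge 0$. Each summand is nonpositive, so the task is purely to quantify the accumulated negativity, and this is governed by whether the arguments $a_kx=C_{N',T'}x/k^2$ reach the region where $|\cos|$ is bounded away from $1$. In the second regime, $(C_{N',T'}x/\theta_0)^{1/2}+1\le N'$ forces $a_{N'}x\le\theta_0$, so all arguments lie in $[0,\theta_0]$; fixing $\theta_0<\pi/2$ and using the elementary inequality $\ln\cos t\le -t^2/2$ for $|t|<\pi/2$ yields $\ln|P_{N',T'}(x)|\le -\tfrac12 C_{N',T'}^2x^2\sum_{k\ge N'}k^{-4}$, and $\sum_{k\ge N'}k^{-4}\ge\int_{N'}^{\infty}s^{-4}\,ds=\tfrac1{3N'^3}$ gives a bound of the claimed shape $-\theta_1N'^{-3}(C_{N',T'}x/\theta_0)^2$.

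In the first regime, $(C_{N',T'}x/\theta_0)^{1/2}+1\ge N'$ guarantees the existence of indices $k\ge N'$ with $a_kx$ of order one or larger, and the decay is then of order $\sqrt{x}$. To sidestep the logarithmic singularities of $\ln|\cos|$, I would retain only indices whose argument lands in a favorable band, say $a_kx\bmod\pi\in[\pi/4,3\pi/4]$, where $|\cos(a_kx)|\le 2^{-1/2}$ and hence $\ln|\cos(a_kx)|\le-\tfrac12\ln 2$; discarding the remaining nonpositive terms only strengthens the upper bound. The consecutive gaps $a_kx-a_{k+1}x\asymp C_{N',T'}x/k^3$ stay below $\pi/2$ once $k\gtrsim (C_{N',T'}x)^{1/3}$, so on the window $k\in[\,c(C_{N',T'}x)^{1/3},(C_{N',T'}x/\theta_0)^{1/2}\,]$ the arguments decrease monotonically with step at most $\pi/2$ and therefore cannot skip a band; a fixed positive fraction of these $\asymp\sqrt{C_{N',T'}x}$ indices is favorable, producing $\ln|P_{N',T'}(x)|\le-\theta_1 8^{-1}(C_{N',T'}x/\theta_0)^{1/2}$. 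Equivalently one may phrase this as the Riemann-sum comparison $\sum_{k\ge N'}\ln|\cos(a_kx)|\approx\sqrt{C_{N',T'}x}\int\ln|\cos(u^{-2})|\,du$ with a convergent, strictly negative integral, exactly along the lines of \cite[Theorem 2.3]{Bio2gap} and \cite[Theorem 2.4]{bio_primo}.

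The main obstacle is this large-$x$ estimate: one must make the counting of favorable indices rigorous near the singularities of $\ln|\cos|$ and, crucially, keep $\theta_0,\theta_1$ independent of both $N'$ and $T'$, since that uniformity is precisely what the summation over $k$ in Proposition \ref{prop-bio-fam} exploits. Handling the transition between the two regimes (where $a_{N'}x\approx\theta_0$) and checking that the discretization error does not erode the $\sqrt{x}$ gain are the delicate points; everything else reduces to the convergence and monotonicity facts recorded above.
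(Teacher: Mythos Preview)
Your proposal is correct and follows essentially the same route as the paper, which does not give an independent argument but simply refers to \cite[Lemma~4.3]{bio_primo}; the construction there proceeds exactly as you outline---convergence of the infinite product via $\sum a_k^2<\infty$, the elementary bound $|\cos w|\le e^{|\mathrm{Im}\,w|}$ together with the normalization $\sum_{k\ge N'}a_k=T'/2$ for the first three items, and for the decay on $\R$ the dichotomy between the quadratic estimate $\ln\cos t\le -t^2/2$ when all arguments are small and a counting of indices with $a_kx$ in a favourable band when $(C_{N',T'}|x|/\theta_0)^{1/2}$ exceeds $N'$. Your identification of the large-$x$ counting (and the uniformity of $\theta_0,\theta_1$ in $N',T'$) as the only genuinely delicate point is accurate and matches the emphasis in \cite{bio_primo}.
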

\begin{proof}
See the proof of \cite[Lemma 4.3]{bio_primo}.
\end{proof}
Under the validity of Lemma \ref{lemma_theta}, we can consider now $N'$ such that
$$N'\geq 2+\frac{M^2\theta}{\gamma^2 T'},\ \ \ \ \ \  \ \theta:=2^{11}\pi^2 \frac{\theta_0}{\theta^2_1}. $$
In order to construct the biorthogonal family of Proposition \ref{prop-bio-fam}, we introduce the following functions 
$$\forall\, m\in\N,\ \forall\, z\in \C,\ \ \ f_{m,N',T'}(z)=F_m(z)\frac{P_{N',T'}(-z)}{P_{N',T'}(i\lambda_m)}.$$
Now, each $f_{m,N',T'}$ verifies \cite[Lemma 4.4]{bio_primo} and we define the Fourier transform of $f_{m,N',T'}(-x) e^{i x \frac{T}{2}}$:
$$\psi_{m,N',T'}(\xi):=\frac{1}{2\pi}\int_\R f_{m,N',T'}(-x)e^{-ix\frac{T}{2}}e^{-ix\xi}dx.$$
We use the functions $\psi_{m,N',T'}$ to build the biorthogonal family as explained in the following Lemma. 

\begin{lemma}\label{lemma_penultimo}
Let $T>0$. Let $T'$ and $N'$ as above. The functions
$$\sigma^+_{m,N',T'}(t)=\psi_{m,N',T'}\Big(\frac{T}{2}-t\Big)e^{-\lambda_m T},\quad \forall\,m\in\N^*$$
form a biorthogonal family to the exponentials $\{e^{-\lambda_m t}\}_{m\in\N^*}$ verifying the inequality \eqref{biort-bound}.
\end{lemma}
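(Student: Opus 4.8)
The plan is to reproduce, in the present non-uniform-gap setting, the Fourier-analytic construction of \cite[Theorem 2.3]{Bio2gap} and \cite[Theorem 2.4]{bio_primo}, built around the functions $F_m$, $P_{N',T'}$ and $f_{m,N',T'}$ introduced above. The key structural fact is that $f_{m,N',T'}$ is an entire \emph{interpolating} function: by construction $F_m$ vanishes at the points $\{-i\lambda_n\}_{n\neq m}$ and equals $1$ at $-i\lambda_m$, and the normalisation by $P_{N',T'}(i\lambda_m)$ does not alter these values, so that
\begin{equation*}
f_{m,N',T'}(-i\lambda_n)=\delta_{m,n},\qquad\forall\,n\in\N^*.
\end{equation*}
The function $\psi_{m,N',T'}$ is, up to the modulation $e^{-ixT/2}$, the inverse Fourier transform of $f_{m,N',T'}$, and $\sigma^+_{m,N',T'}$ is obtained from it by the reflection $t\mapsto T/2-t$ and the weight $e^{-\lambda_m T}$, arranged precisely so that the resulting function lives on $(0,T)$.

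First I would establish the two analytic properties of $g_m(x):=f_{m,N',T'}(-x)e^{-ixT/2}$ on which everything rests, namely that $g_m\in L^2(\R)$ and that it is of exponential type at most $T/2$. Writing $|g_m(x)|=|F_m(x)|\,|P_{N',T'}(-x)|/|P_{N',T'}(i\lambda_m)|$, integrability follows by bounding $|F_m(x)|$ through Lemma \ref{biort4} and invoking the real-axis decay of $P_{N',T'}$ supplied by the first bullet of Lemma \ref{lemma_theta}; the constraint $N'\geq 2+M^2\theta/(\gamma^2T')$ is exactly what forces the $e^{-c\sqrt{|x|}}$ decay of $P_{N',T'}$ to dominate the $e^{(4M\pi/\gamma)\sqrt{|x|}}$ growth of $F_m$. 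The exponential-type estimate, combined with the Paley--Wiener theorem, then yields that $\psi_{m,N',T'}$ is supported in $[-T/2,T/2]$.

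Next I would verify biorthogonality. In $\int_0^T\sigma^+_{m,N',T'}(t)e^{-\lambda_n t}\,dt$ the change of variable $s=T/2-t$ gives $e^{-\lambda_m T}e^{-\lambda_n T/2}\int_{-T/2}^{T/2}\psi_{m,N',T'}(s)e^{\lambda_n s}\,ds$; since $\psi_{m,N',T'}$ is supported in $[-T/2,T/2]$, the integral may be extended to all of $\R$, where Fourier inversion identifies it with a value of $g_m$ at the imaginary point determined by $\lambda_n$. The interpolation property above then produces $\delta_{m,n}$ once the exponential weights are collected, so $\{\sigma^+_{m,N',T'}\}$ is biorthogonal to $\{e^{-\lambda_m t}\}_{m\in\N^*}$.

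For the quantitative estimate \eqref{biort-bound} I would use Plancherel: since $\sigma^+_{m,N',T'}$ is supported in $(0,T)$,
\begin{equation*}
\norm{\sigma^+_{m,N',T'}}^2_{L^2(0,T)}\leq e^{-2\lambda_m T}\norm{\psi_{m,N',T'}}^2_{L^2(\R)}=\frac{e^{-2\lambda_m T}}{2\pi}\,\frac{1}{|P_{N',T'}(i\lambda_m)|^2}\int_\R|F_m(x)|^2\,|P_{N',T'}(-x)|^2\,dx,
\end{equation*}
after which Lemma \ref{biort4} generates the factor $(1+\gamma^2/(a_m(a_m+2\sqrt{\lambda_1})))^{2M}$ and the term $e^{C\sqrt{\lambda_m}/\gamma}$, the real-axis bounds of Lemma \ref{lemma_theta} render the integral finite and produce the $T$-dependent factors $e^{C/(T\gamma^2)}$ and $B(T,\gamma)$ through $T'=\min\{T,M^2\gamma^{-1}\}$, and a lower bound for $|P_{N',T'}(i\lambda_m)|$ controls the denominator; collecting these reproduces \eqref{biort-bound}. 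I expect the main obstacle to be the \emph{simultaneous} control of type and integrability of $g_m$: biorthogonality forces the exponential type to remain $\leq T/2$ (so that $\psi_{m,N',T'}$ stays supported in $[-T/2,T/2]$), while Plancherel requires $g_m\in L^2(\R)$, and these two requirements pull against the $e^{c\sqrt{|x|}}$ growth of $F_m$ in opposite directions. It is exactly this tension that dictates the choices of $T'$, $N'$, $a_k$ and $C_{N',T'}$, and the careful tracking of all constants needed to land on \eqref{biort-bound} verbatim is the most laborious step, carried out as in \cite[Sections 4.4 and 4.5]{bio_primo}.
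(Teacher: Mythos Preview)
Your proposal is correct and follows essentially the same approach as the paper: the paper's proof is a one-line deferral to \cite[Lemma 4.5]{bio_primo}, with the only modification being the use of the estimate from Lemma \ref{biort4} in place of the corresponding bound in the cited reference. You have simply spelled out the Paley--Wiener/Plancherel machinery that the paper leaves to the citation, and your identification of Lemma \ref{biort4} as the place where the non-uniform gap enters is exactly right.
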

\begin{proof}
The proof follows exactly the one of \cite[Lemma 4.5]{bio_primo} by using the inequality provided in Lemma \ref{biort4}.
\end{proof}

\begin{proof}[Proof of Proposition \ref{prop-bio-fam}]
The claim follows directly from Lemma \ref{lemma_penultimo} by noticing that the upper bounds provided for the elements of the biorthogonal family do not depend on the intermediate parameters $N'$ and $T'$. \end{proof}

\end{document}